\newtheorem{theorem}{Theorem}
\newtheorem{corollary}[theorem]{Corollary}
\newtheorem{lemma}[theorem]{Lemma}
\newtheorem{proposition}[theorem]{Proposition}
\theoremstyle{remark}
\newtheorem{remark}{Remark}
\numberwithin{equation}{section}
\numberwithin{theorem}{section}
\numberwithin{remark}{section}
\definecolor{ks}{rgb}{0.7,0.1,0.2}
\def \R {\mathbb{R}}
\def \Rd {\mathbb{R}^d}
\def \Rdz {\mathbb{R}^d_0}
\def \NN {\mathbb{N}}
\renewcommand{\d}{{\rm d}}
\newcommand\reallywidehat[1]{%
\savestack{\tmpbox}{\stretchto{%
  \scaleto{%
    \scalerel*[\widthof{\ensuremath{#1}}]{\kern-.6pt\bigwedge\kern-.6pt}%
    {\rule[-\textheight/2]{1ex}{\textheight}}
  }{\textheight}%
}{0.5ex}}%
\stackon[1pt]{#1}{\tmpbox}%
}
\author[T. Jakubowski]{Tomasz Jakubowski\,\orcidlink{0000-0001-6349-7593}}
\email{tomasz.jakubowski@pwr.edu.pl}
\author[K. Kaleta]{Kamil Kaleta\,\orcidlink{0000-0002-1766-524X}}
\email{kamil.kaleta@pwr.edu.pl}
\author[K. Szczypkowski]{Karol Szczypkowski*\, \orcidlink{0000-0001-9986-0788}}
\address{
	Wydzia{\l} Matematyki,
	Politechnika Wroc{\l}awska\\
	Wyb. Wyspia\'{n}skiego 27\\
	50-370 Wroc{\l}aw\\
	Poland
}
\email{karol.szczypkowski@pwr.edu.pl}
\thanks{* Corresponding author.}
\title{Relativistic stable operators  with critical potentials}
\begin{document}

\begin{abstract}
We give local in time sharp two sided estimates 
of the heat kernel associated with the 
relativistic stable operator perturbed by a critical (Hardy) potential.
\end{abstract}

\maketitle

\noindent {\bf AMS 2020 Mathematics Subject Classification}: Primary  35K67, 47D08; Secondary 31C05, 47A55, 60J35.

\noindent {\bf Keywords and phrases:} heat kernel estimates, non-local operator, relativistic Hardy potential, Schr{\"o}dinger perturbation, Hardy identity.

\vspace{\baselineskip}

\section{Introduction}\label{sec:main}
Let $d\in\NN:=\{1,2,\ldots\}$ and $\alpha\in (0,2\land d)$. For $\delta \in (0,d - \alpha)$
we consider the 
operator 
\begin{align}\label{def:operator}
L=-(-\Delta+1)^{\alpha/2}+V_{\delta}(x)\,,
\end{align}
where 
\begin{align}\label{def:V}
V_{\delta}(x)=
\frac{2^{\alpha/2}\Gamma\left(\frac{d-\delta}{2}\right)}{\Gamma\left(\frac{d-\delta-\alpha}{2}\right)}  \frac{K_{\frac{\delta+\alpha}{2}}\left(|x|\right)}{K_{\frac{\delta}{2}}\left(|x|\right) }
|x|^{-\alpha/2}\,.
\end{align}
$\Gamma$ denotes the Gamma function and $K_{\nu}$ is the modified Bessel function of the second kind. 
The potential $V_\delta$ arises naturally as a critical (Hardy) potential for the relativistic operator when implementing the approach developed in Bogdan et al.\ \cite{MR3460023}.
For $\delta=\frac{d-\alpha}{2}$ it was derived and investigated  
by Roncal \cite{MR4294647}
from three different perspectives.

The main purpose of the present paper is to analyse
the heat kernel corresponding to $L$, that is,
the fundamental solution 
to the parabolic equation
$\partial_t u = L u$.
Analogous problems were studied for the classical Hardy operator $\Delta + \kappa|x|^{-2}$ and its fractional counterpart $-(-\Delta)^{\alpha/2} + \kappa|x|^{-\alpha}$ with $\alpha \in (0,2)$, see Subsection~\ref{subsec:Hist} for related literature. Contrary to the latter two, the operator $-(-\Delta+1)^{\alpha/2} + V_\delta(x)$ is not homogeneous; the kinetic term $-((-\Delta+1)^{\alpha/2}-1)$ manifests properties of both operators: $\Delta$ and $-(-\Delta)^{\alpha/2}$. This generates new challenges and requires the development of an appropriate methodology. The methods we propose allow us to treat in a common framework other important operators, e.g., the relativistic operator with Coulomb potential 
\begin{align*}
-\sqrt{-\Delta+1}+\kappa |x|^{-1}.
\end{align*}

\subsection{Main results}

In our first result we address the question of the existence of the heat kernel corresponding to
the operator $L$.  
We use the notation $\Rdz=\Rd\setminus \{0\}$.
\begin{proposition}\label{prop:weak_sol}
Let $\delta\in (0,d-\alpha)$.
There is a Borel function $\tilde{p}\colon (0,\infty)\times \Rdz\times \Rdz \to (0,\infty)$
such that for every 
$s\in\R$, $x\in\Rdz$ and
$\phi \in C_c^{\infty}(\R\times\Rd)$
we have
$$
\int_s^{\infty}\int_{\Rd} \tilde{p}(u-s,x,z)
\left[\partial_u - (-\Delta_z+1)^{\alpha/2}+V_\delta(z) \right] \phi(u,z) \,\d{z}\d{u}
 =-\phi(s,x)\,.
$$
The double integral is absolutely convergent.
\end{proposition}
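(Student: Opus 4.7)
The plan is to build $\tilde{p}$ as a Duhamel perturbation of the heat kernel $p(t,x,y)$ of the free operator $-(-\Delta+1)^{\alpha/2}$. The kernel $p$ is classical: smooth, positive, symmetric in $x,y$, and known to satisfy the identity of the proposition with $V_\delta\equiv 0$. Setting $p_0=p$ and inductively
$$p_n(t,x,y) = \int_0^t\!\!\int_{\Rd} p(t-r,x,z)\, V_\delta(z)\, p_{n-1}(r,z,y)\, \d z\, \d r,$$
I define $\tilde{p}=\sum_{n=0}^\infty p_n$. Positivity, symmetry and joint Borel measurability are inherited term by term.

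\textbf{The main obstacle: controlling a critical potential.}
Because $V_\delta(x)\asymp c|x|^{-\alpha}$ as $|x|\to 0$ (from the small-argument asymptotics of $K_\nu$), $V_\delta$ sits at the Hardy threshold and is not in the Kato class, so the standard bound $p_n\le \|V_\delta\|_K^n\, t^n/n!$ diverges. The way around this, and the reason for the precise form of $V_\delta$, is a Hardy-type identity
$$(-\Delta+1)^{\alpha/2} h_\delta(x) = V_\delta(x)\, h_\delta(x), \qquad x\in\Rdz,$$
for an explicit positive reference function $h_\delta$ expressible via $K_{\delta/2}(|x|)$ and a suitable power of $|x|$; this is in essence Roncal's identity from \cite{MR4294647}. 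Integrating against $p$, one obtains the quasi-invariance
$$\int_{\Rd} p(t,x,z)\, V_\delta(z)\, h_\delta(z)\, \d z \le h_\delta(x),$$
which iterated in the series controls the weighted integrals $\int p_n(t,x,y)\, h_\delta(y)\, \d y$ and, combined with standard off-diagonal short-time bounds for $p$, forces $\tilde{p}(t,x,y)<\infty$ pointwise for $t>0$ and $x,y\in\Rdz$. The hard part is the rigorous justification of the Hardy identity, i.e.\ computing $(-\Delta+1)^{\alpha/2}$ on Bessel profiles, and then turning the invariance into quantitative pointwise control of each $p_n$ near the singularity of $V_\delta$.

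\textbf{The weak PDE by telescoping.}
With the series in hand, differentiating the Duhamel recursion gives
$$\partial_t p_n(t,x,z) = -(-\Delta_z+1)^{\alpha/2} p_n(t,x,z) + V_\delta(z)\, p_{n-1}(t,x,z), \qquad n\ge 1,$$
(and for $n=0$ the standard free equation). Plugging $\tilde{p}=\sum_n p_n$ into the identity of the proposition and integrating by parts in $u$, the $n=0$ term produces the Dirac-type boundary contribution $-\phi(s,x)$, while for $n\ge 1$ the combined action of $\partial_u$ and $-(-\Delta_z+1)^{\alpha/2}$ on $p_n$ exactly cancels the $V_\delta(z)\phi(u,z)$ contribution at level $n-1$. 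The resulting telescoping sum leaves only $-\phi(s,x)$, provided the tail $\int p_N(u-s,x,z)\,V_\delta(z)\,\phi(u,z)\,\d z\,\d u$ tends to $0$ as $N\to\infty$, which is a direct consequence of the bounds established in the previous paragraph. Absolute convergence of the double integral in the statement follows from the same estimates together with the compact support of $\phi$.
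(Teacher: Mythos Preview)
Your overall architecture matches the paper's: construct $\tilde p$ as the Duhamel perturbation series of $p$ by $V_\delta$, then transfer the weak fundamental-solution identity from $p$ to $\tilde p$ via the series structure. The paper packages your telescoping as the operator identity $\tilde P = P + \tilde P V_\delta P$ acting on $\psi:=(\partial_u-(-\Delta_z+1)^{\alpha/2})\phi$, which yields $\tilde P(\psi+V_\delta\phi)=-\phi$ in one line. This sidesteps the need to interpret $\partial_t p_n$ or $(-\Delta_z+1)^{\alpha/2}p_n$ pointwise for $n\ge 1$, which you would otherwise have to justify.

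The substantive gap is in your finiteness step. You correctly identify this as ``the hard part'' but do not carry it out, and the route you sketch (pointwise Hardy identity $(-\Delta+1)^{\alpha/2}h_\delta=V_\delta h_\delta$, then supermedian bounds, then off-diagonal estimates) is not the one the paper uses and is not obviously sufficient: the supermedian inequality $\int\tilde p(t,x,y)h_\delta(y)\,\d y\le h_\delta(x)$ controls a weighted integral, not pointwise values, and it does not by itself give the crucial absolute convergence $\int_s^\infty\!\int\tilde p(u-s,x,z)|\psi(u,z)|\,\d z\,\d u<\infty$ needed to rearrange the sums and apply Fubini. The paper instead compares with the fractional Laplacian: from $p\le p^0$ one gets $\tilde p\le\tilde p^{\,0}_{V_\delta}$; then one shows the difference $V_\delta-\kappa_\delta|x|^{-\alpha}\ge 0$ is relatively Kato for the \emph{known} kernel $\tilde p^{\,0}_{V^0_\delta}$ (this requires a careful analysis of the difference near $0$, where it can still blow up like $|x|^{\delta-\alpha}$ when $\delta<\alpha$), yielding $\tilde p^{\,0}_{V_\delta}\le c\,\tilde p^{\,0}_{V^0_\delta}$ locally in time. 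The required integrability then comes from existing estimates for $\tilde p^{\,0}_{V^0_\delta}$ in \cite{MR3933622}. This detour through the stable case is the actual content behind the absolute convergence claim, and your proposal does not supply an alternative.
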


The function $\tilde{p}$ is constructed in Subsection~\ref{sec:Sch-P} by the use of the perturbation technique of kernels.
Our main result is Theorem~\ref{thm:MAIN}, which is concerned with the estimates of $\tilde{p}$. For $t>0$, $x\in\Rdz$ we define

\begin{align*}
H^0(t,x)&=
1+t^{\delta/\alpha}|x|^{-\delta} \qquad \quad &&\mbox{if } \quad \delta\in (0,\tfrac{d-\alpha}{2}]\,,\\
H^0(t,x)&=1+t^{(d-\alpha-\delta)/\alpha}|x|^{-(d-\alpha-\delta)}
\qquad \quad &&\mbox{if } \quad \delta\in [\tfrac{d-\alpha}{2},d-\alpha)\,.
\end{align*}
Let $p$ be the heat kernel for the operator $-(-\Delta +1)^{\alpha/2}$ (see \eqref{def:p-m} with $m=1$).
Here, and in what follows, we write $f\approx g$
on $D$ if $f,g \geq 0$ and there is a (comparability) constant $c\geq 1$ such that
$c^{-1} g \leq f \leq c g$ holds on $D$.

\begin{theorem}\label{thm:MAIN}
Let $\alpha\in (0,2\land d)$ and $\delta \in (0, d-\alpha)$. 
The function $\tilde{p}$ is 
jointly continuous on $(0,\infty)\times\Rdz \times \Rdz$
and for every $T>0$ we have
$$
\tilde{p}(t,x,y)\approx p(t,x,y) H^0(t,x)H^0(t,y)
$$
on $(0,T]\times \Rdz \times \Rdz$.
The comparability constant can be chosen to depend only on $d,\alpha, \delta, T$.
\end{theorem}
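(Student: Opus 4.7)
\textbf{Proof plan for Theorem~\ref{thm:MAIN}.}
My plan is to treat $\tilde p$ as the Duhamel/Schr\"odinger perturbation series
$$
\tilde p(t,x,y)=\sum_{n=0}^{\infty}p_n(t,x,y), \qquad p_0=p, \qquad p_{n+1}(t,x,y)=\int_0^t\!\!\int_{\Rd} p_n(t-s,x,z)\,V_\delta(z)\,p(s,z,y)\,\d z\,\d s,
$$
(this is the series used to construct $\tilde p$ in Subsection~\ref{sec:Sch-P}) and to extract sharp two-sided bounds by isolating the right ``ground-state'' factor $h_\delta$ associated to the Hardy-type potential $V_\delta$. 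The key analytical input is an exact Hardy identity $((-\Delta+1)^{\alpha/2}-1)h_\delta=V_\delta h_\delta$ for an explicit function $h_\delta$ expressed in terms of modified Bessel functions, cf.\ the derivation of \eqref{def:V}; this is where the normalization in \eqref{def:V} is used, and it is precisely the relativistic analogue of the fractional Hardy identity from \cite{MR3460023,MR4294647}. Near the origin $h_\delta(x)$ behaves like $|x|^{-\delta}$, while its reflection $h_{d-\alpha-\delta}$ behaves like $|x|^{-(d-\alpha-\delta)}$, and the function $H^{0}$ is a ``truncated'' (at the scale $t^{1/\alpha}$) version of whichever of the two has the \emph{weaker} singularity. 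Both $h_\delta$ and $h_{d-\alpha-\delta}$ are $L$-harmonic in the pointwise sense, and this is what makes $H^0$ compatible with the heat semigroup generated by $L$.

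For the \textbf{upper bound} I would proceed by induction on $n$. Using the short-time sharp estimates of $p$ (from \eqref{def:p-m} with $m=1$, where $p(t,x,y)$ interpolates between the Gaussian and the $\alpha$-stable regimes with an overall exponential decay factor) together with a 3P-type convolution inequality for $p$ and the pointwise bound $V_\delta(x)\lesssim |x|^{-\alpha}\wedge 1$, one shows that integrals of the form $\int_0^t\!\!\int p(t-s,x,z)V_\delta(z)p(s,z,y)\,\d z\,\d s$ reproduce $p(t,x,y)$ multiplied by terms of type $H^{0}(t,x)$ and $H^{0}(t,y)$, with at most a constant (depending on $T$) loss. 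This is where the Hardy identity does the heavy lifting: it allows one to replace the naive sum $\int_0^t e^{-s}\,s^{-1}\,\d s$, which would diverge at criticality, by a controlled action on the ``ground state''. Summing over $n$ yields $\tilde p(t,x,y)\le C\,p(t,x,y)H^0(t,x)H^0(t,y)$ on $(0,T]\times\Rdz\times\Rdz$ with $C=C(d,\alpha,\delta,T)$.

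The \textbf{lower bound} comes, at the coarse scale, from $\tilde p\ge p$ (since $V_\delta\ge 0$) for the $1\cdot 1$ part of $H^0H^0$; the singular part $t^{\delta/\alpha}|x|^{-\delta}$ (respectively, its $(d-\alpha-\delta)$-analog) is produced by bounding $\tilde p$ from below by a single iterate $p_N(t,x,y)$ for $N$ large enough, localizing the inner integrals on annuli of radius $\sim |x|$ (or $|y|$) where $V_\delta$ is comparable to $|z|^{-\alpha}$, and invoking the near-diagonal lower bound of $p$. Finally, joint continuity of $\tilde p$ on $(0,\infty)\times\Rdz\times\Rdz$ follows from uniform convergence of the perturbation series on compact sets together with continuity of each $p_n$, which is itself obtained inductively from the continuity of $p$.

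\emph{Main obstacles.} The critical scaling of $V_\delta$ at the origin means term-by-term Khasminskii-type estimates diverge, so the bounds \emph{must} be read off from the Hardy identity and the explicit short-time behaviour of $p$; this is made more delicate by the inhomogeneity of $-(-\Delta+1)^{\alpha/2}$, which mixes diffusive and jump scales and forces the analysis to distinguish $|x|\lesssim t^{1/\alpha}$, $t^{1/\alpha}\lesssim|x|\lesssim 1$, and $|x|\gtrsim 1$ separately; and the choice of which branch $\{\delta,d-\alpha-\delta\}$ appears in $H^0$ must be argued carefully, since both $h_\delta$ and $h_{d-\alpha-\delta}$ solve the same equation but only the less singular one is $L^1_{\mathrm{loc}}$-compatible with the perturbation series.
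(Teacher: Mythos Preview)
Your lower-bound strategy has a genuine gap. A single Duhamel iterate $p_N$ cannot produce the factor $t^{\delta/\alpha}|x|^{-\delta}$ for $|x|\ll t^{1/\alpha}$: localizing the inner variables on annuli $|z_i|\sim|x|$ and using $V_\delta(z_i)\sim|x|^{-\alpha}$ yields, at best, powers $(t|x|^{-\alpha})^N$, and since $\delta/\alpha$ is not an integer no finite $N$ gives the right exponent. The paper's argument is structurally different: it first proves the \emph{exact invariance}
\[
\int_{\Rd}\tilde p(t,x,y)\,h_\delta(y)\,\d y=h_\delta(x)
\]
(Theorem~\ref{thm:tph=h}; upgrading the super-median inequality of \cite{MR3460023} to an equality requires a delicate limiting argument, especially at $\delta=\tfrac{d-\alpha}{2}$), then uses this invariance together with the upper bound to show that the normalized kernel $\rho(t,x,y)=\tilde p(t,x,y)/(H(t,x)H(t,y))$ has uniformly positive $\mu_t$-mass on annuli, and finally bootstraps via Chapman--Kolmogorov (Subsection~\ref{subsec:lower_bound}). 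The invariance is what encodes the non-integer exponent $\delta$; it cannot be replaced by any finite piece of the series.

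Your upper-bound sketch is also off the paper's track, and the direct induction you propose is precisely the approach that breaks at criticality (as you note yourself). The paper avoids this by \emph{not} attacking the relativistic series directly: it takes the already-established sharp estimates for $\tilde p^{\,0}_{V_\delta^0}$ (fractional Laplacian with $\kappa_\delta|x|^{-\alpha}$, \cite{MR3933622}), perturbs by the \emph{subcritical} difference $V_\delta-\kappa_\delta|x|^{-\alpha}$ (Lemma~\ref{lem:diff_est}, Corollary~\ref{cor:Kato}) to reach $\tilde p^{\,0}_{V_\delta}$, and then passes from $p^0$ to $p$ using $p\le p^0$ for small $|x-y|$ and a separate argument recovering the exponential spatial decay of $p$ for large $|x-y|$ (Propositions~\ref{prop:|x-y|-small} and~\ref{prop:|x-y|-large}). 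The 3G inequality is applied to $p^0$, not $p$. Finally, the range $\delta\in(\tfrac{d-\alpha}{2},d-\alpha)$ is not treated symmetrically with the lower range: the paper handles it only \emph{after} establishing the full two-sided estimate for $\delta'=d-\alpha-\delta\in(0,\tfrac{d-\alpha}{2}]$, via the sandwich $\tilde p^{\,1}_{V^0_{\delta'}}\le\tilde p^{\,1}_{V^1_\delta}\le\tilde p^{\,1}_{V^1_{\delta'}}$ coming from $\kappa_\delta=\kappa_{\delta'}$ and $V_\delta<V_{\delta'}$ (Remark~\ref{rem:reflect_beta}, Theorem~\ref{thm:all_comp}).
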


\noindent
Note that local in time sharp estimates of $p$ are known.
Namely, for every $T>0$ we have 
$$
p(t,x,y)\approx \left( t^{-d/\alpha} \land 
\frac{t K_{\frac{d+\alpha}{2}}(|x-y|)}{|x-y|^{\frac{d+\alpha}{2}}}\right)
$$
on $(0,T]\times\Rd\times\Rd$,
see Subsection~\ref{sec:rel-den}. We first prove Theorem \ref{thm:MAIN} for $\delta \in (0,\tfrac{d-\alpha}{2}]$ in Theorem~\ref{thm:main-1}, and then in Theorem~\ref{thm:all_comp} we obtain an extended version of Theorem \ref{thm:main-1}, which in particular covers the case $\delta \in [\tfrac{d-\alpha}{2},d-\alpha)$, but also potentials of the form $\kappa |x|^{-\alpha}$.

We give another property that relates the operator $-(-\Delta +1)^{\alpha/2}$ and the potential $V_\delta$, and formally provides the representation of the quadratic form of $L=-(-\Delta +1)^{\alpha/2}+V_\delta$.
For $f\in L^2(\Rd)$ we let 
$$
P_t f(x)=\int_{\Rd} p(t,x,y)f(y)\,\d{y}\qquad \mbox{and}\qquad \mathcal{E}(f,f)=\lim_{t\to 0^+} \frac1t \left<f-P_t f,f\right>\,.
$$
Since $P_t$ is a strongly continuous contraction semigroup on $L^2(\Rd)$, then $\mathcal{E}$ is the  quadratic form corresponding to $-(-\Delta+1)^{\alpha/2}$, see \cite[Lemma~1.3.4]{MR2778606}.
As already mentioned, the potential $V_\delta$ is constructed by using the approach developed in Bogdan et al.\ \cite{MR3460023},
therefore as a consequence we get the following Hardy identity (see \cite[Theorem 2]{MR3460023}.
For $\delta=\frac{d-\alpha}{2}$ this result was obtained by Roncal \cite{MR4294647} via different methods.

\begin{corollary}\label{thm:3}
Let $\delta\in (0,d-\alpha)$.
Then for every $f\in L^2(\Rd)$,
\begin{align*}
\mathcal{E}(f,f)= \int_{\Rd} f^2(x)V_\delta(x)\,\d{x} 
+\frac12 \int_{\Rd} \int_{\Rd} \left[
\frac{f(x)}{h_\delta(x)}-\frac{f(y)}{h_\delta(y)}\right]^2 h_\delta(y)h_\delta(x) \nu(x-y)\,\d{y}\d{x}\,,
\end{align*}
where
$$
h_{\delta}(x)= c_{d,\alpha,\delta} \,|x|^{-\frac{\delta}{2}} K_{\frac{\delta}{2}}(|x|)\,,
\qquad
\nu(x)= \frac{2^{\frac{\alpha-d}{2}+1}}{\pi^{d/2}|\Gamma(-\frac{\alpha}{2})| }\, \frac{K_{\frac{d+\alpha}{2}}(|x|)}{|x|^{\frac{d+\alpha}{2}}}\,.
$$
\end{corollary}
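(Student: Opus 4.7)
The plan is to deduce Corollary~\ref{thm:3} from the non-local ground-state representation of \cite[Theorem~2]{MR3460023} applied to the quadratic form of $-(-\Delta+1)^{\alpha/2}$ with the weight $h_\delta$. Two preliminary facts have to be in place. First, by Plancherel applied to the symbol split $(1+|\xi|^2)^{\alpha/2}=1+[(1+|\xi|^2)^{\alpha/2}-1]$, one gets the Beurling--Deny-type decomposition
\begin{align*}
\mathcal{E}(f,f) = \int_{\Rd} f(x)^2\,\d x + \tfrac12 \int_{\Rd}\int_{\Rd} [f(x)-f(y)]^2\,\nu(x-y)\,\d y\,\d x,
\end{align*}
where $\nu$ is precisely the density in the statement; this formula for $\nu$ in terms of $K_{(d+\alpha)/2}$ is classical and reads off from the subordinator representation of the relativistic stable process. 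Second, one needs the pointwise identity
\begin{align*}
(-\Delta+1)^{\alpha/2} h_\delta(x) = V_\delta(x)\,h_\delta(x), \qquad x\in\Rdz.
\end{align*}
Up to a constant, $h_\delta$ is the Bessel potential kernel with Fourier transform $(1+|\xi|^2)^{-(d-\delta)/2}$, so applying $(-\Delta+1)^{\alpha/2}$ lowers the Bessel index by $\alpha$; rewriting the resulting kernel through the standard formula $G_\beta(x)\propto|x|^{(\beta-d)/2}K_{(d-\beta)/2}(|x|)$ produces a multiple of $|x|^{-(\delta+\alpha)/2}K_{(\delta+\alpha)/2}(|x|)$, which matches $V_\delta(x) h_\delta(x)$ after tracking $\Gamma$-constants. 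The normalization $c_{d,\alpha,\delta}$ in $h_\delta$ is chosen precisely so that the two scalars coincide.

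With both ingredients in hand, the identity reduces to a manipulation of the jump form. Setting $f=h_\delta g$ and expanding,
\begin{align*}
[g(x)h_\delta(x) - g(y)h_\delta(y)]^2
&= h_\delta(x)h_\delta(y)[g(x)-g(y)]^2 \\
&\quad + \bigl[g(x)^2 h_\delta(x) - g(y)^2 h_\delta(y)\bigr]\bigl[h_\delta(x)-h_\delta(y)\bigr].
\end{align*}
Symmetry in $(x,y)$ of $\nu(x-y)$ collapses the integral of the last summand to
\begin{align*}
\int_{\Rd} \frac{f(x)^2}{h_\delta(x)}\int_{\Rd}[h_\delta(x)-h_\delta(y)]\,\nu(x-y)\,\d y\,\d x,
\end{align*}
whose inner integral equals $[(-\Delta+1)^{\alpha/2}-1]\,h_\delta(x) = V_\delta(x)h_\delta(x) - h_\delta(x)$ by the pointwise identity and the Beurling--Deny split (the $-h_\delta$ coming from the mass piece of the symbol). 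Combining with the $\int f^2\,\d x$ term in the decomposition of $\mathcal{E}(f,f)$, the $-h_\delta$ contribution cancels and the $V_\delta h_\delta$ contribution yields exactly $\int f^2 V_\delta\,\d x$, giving the claim.

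The main obstacle is securing the constants in the second preliminary: while $(-\Delta+1)^{\alpha/2}h_\delta$ and $V_\delta h_\delta$ coincide as functional expressions in $|x|^{-(\delta+\alpha)/2}K_{(\delta+\alpha)/2}(|x|)$ by Bessel potential theory, matching the scalar requires careful use of the duplication/reflection identities for $\Gamma$, together with the precise $\Gamma$-ratio hard-coded into $V_\delta$. A secondary technical point is a standard approximation argument, reducing to smooth compactly supported $f$ in the form domain of $\mathcal{E}$ in order to justify the interchanges of integrals and the finiteness of $\int\int [f(x)-f(y)]^2 \nu(x-y)\,\d y\,\d x$; this is handled by Fatou/monotone convergence as in \cite{MR3460023}.
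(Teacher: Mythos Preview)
Your route differs substantially from the paper's. The paper's proof is essentially two lines: \cite[Theorem~2, formula~(16)]{MR3460023} already delivers, for every $f\in L^2(\Rd)$, the identity
\[
\mathcal{E}(f,f)=\int_{\Rd} f^2 V_\delta\,\d{x}+\lim_{t\to 0^+}\int_{\Rd}\int_{\Rd}\frac{p(t,x,y)}{2t}\left[\frac{f(x)}{h_\delta(x)}-\frac{f(y)}{h_\delta(y)}\right]^2 h_\delta(x)h_\delta(y)\,\d{y}\d{x},
\]
because $h_\delta$ and $V_\delta$ are \emph{defined} in Subsection~\ref{subsec:derivation} via the construction of \cite{MR3460023}, so its hypotheses hold automatically. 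The only remaining work is to identify the limit with the $\nu$-integral, which follows from $p(t,x,y)/t\to\nu(x-y)$ (Lemma~\ref{lem:toLm}) together with the domination $p(t,x,y)/t\le c\,\nu(x-y)$ from~\eqref{approx:p}, via dominated convergence if the $\nu$-integral is finite and Fatou's lemma otherwise. No Beurling--Deny decomposition, no pointwise generator identity, and no algebraic expansion of $[f(x)-f(y)]^2$ are needed.

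What you outline is in effect a re-derivation of \cite[Theorem~2]{MR3460023} itself, via the Doob-transform algebra on the jump form. That is a legitimate alternative, but two of your declared obstacles are misdiagnosed. Your ``second preliminary'' $(-\Delta+1)^{\alpha/2}h_\delta=V_\delta h_\delta$ is not an input to be verified: it is the \emph{definition} $V_\delta=\bar h_\delta/h_\delta$ (Remark~\ref{rem:VisV}), and on the Fourier side the constant-matching is just $\Gamma(s+1)=s\Gamma(s)$ with $s=(d-\delta-\alpha)/\alpha$; no duplication or reflection formulas enter. Conversely, the step you treat as routine is where the genuine care is required: your symmetrization collapses the cross term only after splitting an integral that, for $\alpha\ge 1$, is merely a principal value (the inner integral $\int[h_\delta(x)-h_\delta(y)]\nu(x-y)\,\d y$ is not absolutely convergent near $y=x$). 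One must either truncate to $\{|x-y|>\eps\}$ and pass to the limit, or---as \cite{MR3460023} does and the paper exploits---work at the semigroup level with $p(t,\cdot,\cdot)/t$ in place of $\nu$, which regularizes everything. The extension from $C_c^\infty$ to all of $L^2(\Rd)$, including the case $\mathcal{E}(f,f)=\infty$, is likewise already packaged in \cite[Theorem~2]{MR3460023} and would need its own argument in your direct approach.
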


\noindent
The exact value of the constant $c_{d,\alpha,\delta}>0$ is
 given in  Lemma~\ref{lem:h-expl},
but it does not matter in Corollary~\ref{thm:3} due to cancellations taking place in the double integral expression.

The role of the function $h_\delta$ and its properties are important also in other places of our reasoning. For instance,
in Theorem \ref{thm:tph=h}, we prove that for each $\delta \in (0, (d-\alpha)/2]$, for all $t>0$ and $x\in \Rdz$,
\begin{align*}
\int_{\R^d} \tilde{p}(t,x,y) h_{\delta}(y)\,\d{y}
=h_{\delta}(x)\,.
\end{align*}
The latter is crucial in the proof of the lower bound in Subsection~\ref{subsec:lower_bound}.
In light of that equality, the identity in Corollary \ref{thm:3} may be interpreted as the ground state representation for the quadratic form of the operator $L$.
\begin{remark}\label{rem:1}
The above results can be extended to 
heat kernels $\tilde{p}^m$ corresponding to
operators
$-(-\Delta+m^{2/\alpha})^{\alpha/2}+V_{\delta}^m(x)$, $m>0$, by proper scaling as explained in Lemmas~\ref{lem:scal-m} and~\ref{lem:scal-m-tp}.
In particular, 
for every $\delta\in(0,d-\alpha)$ and $T>0$ we have
$$
\tilde{p}^m(t,x,y)\approx p^m(t,x,y) H^0(t,x)H^0(t,y)
$$
on $(0,T/m]\times \Rdz \times \Rdz$
with  comparability constant
independent of $m>0$ 
(in fact, that depends only on $d,\alpha, \delta, T$).
The heat kernel $p^m$ is defined in \eqref{def:p-m}, see also Lemma~\ref{lem:scal-m}.
Additionally, by taking $m\to 0^+$ we can recover the estimates of $\tilde{p}^{\,0}(t,x,y)$ from
\cite[Theorem~1.1]{MR3933622}, see Proposition~\ref{prop:m-to-zero}.
\end{remark}

\subsection{Methods}

As mentioned in Remark~\ref{rem:1}, the sharp two-sided heat kernel estimates for $-(-\Delta)^{\alpha/2}+\kappa_\delta |x|^{-\alpha}$ were obtained in \cite{MR3933622}. The potential $\kappa_\delta |x|^{-\alpha}$ is the critical perturbation of the fractional Laplacian, that is, the value of $\kappa_\delta$ significantly affects the behaviour of the corresponding heat kernel. In particular, if $\kappa_\delta > \kappa^*$ (certain explicit constant) the heat kernel becomes instantaneously infinite. 

The methods used in \cite{MR3933622} cannot be transferred to our problem
and, for this reason, we have to find a new strategy.
Below, we outline main steps that led to the sharp two-sided estimates in Theorem~\ref{thm:MAIN}. 
We start with the upper estimates:

\begin{enumerate}
	\item[$(U_1)$] upper heat kernel estimates for $-(-\Delta)^{\alpha/2}+V_\delta(x)$ \, (Proposition \ref{prop:upper_bound}),
\item[$(U_2)$] upper heat kernel estimates for $-(-\Delta+1)^{\alpha/2}+V_\delta(x)$\, (Theorem \ref{thm:tp-upper}).
\end{enumerate}

\noindent
We take the heat kernel estimates from  \cite{MR3933622} as the starting point, and after a careful analysis of the potential $V_\delta(x)$, in particular of the difference $V_\delta(x)-\kappa_\delta |x|^{-\alpha}$ (see Lemma \ref{lem:diff_est}),
by applying perturbation technique we arrive at $(U_1)$.
The latter  is again non-trivial, not only because $V_\delta(x)-\kappa_\delta |x|^{-\alpha}>0$ ($V_\delta(x)$ is larger than $\kappa_\delta |x|^{-\alpha}$), but also since the difference is unbounded (singular) in the neighbourhood of zero if $\delta<\alpha$, while the heat kernel of $-(-\Delta)^{\alpha/2}+\kappa_\delta |x|^{-\alpha}$ is already  singular at zero.
The standard realisation of the perturbation procedure fails, because the kernel of this operator does not satisfy the 3G inequality typically used in the first  step of the method. 
When proceeding towards $(U_2)$, one encounters certain technical difficulties. For instance, the heat kernel $p$ of $-(-\Delta+1)^{\alpha/2}$ does not have scaling, and has exponential decay in the spatial
variable, which is harder to retain compared to the power-type nature of the heat kernel of the fractional Laplacian, which has scaling. 
We develop a new integral method to take into account that faster decay, and combine it with $(U_1)$, to finally obtain the proper outcome in $(U_2)$. From that we also get:

\begin{enumerate}
	\item[$(U_2^*)$] upper heat kernel estimates for $-(-\Delta+1)^{\alpha/2}+\kappa_\delta |x|^{-\alpha}$\,  (Theorem \ref{thm:all_comp}).
\end{enumerate}

\noindent 
Since sharp bounds of $p$
are only known locally in time, this is reflected in our results.

Here are the key steps for the lower estimates:

\begin{enumerate}

\item[$(L_1)$] invariance of $h_\delta$ \, (Theorem \ref{thm:tph=h}),

\item[$(L_2)$] lower heat kernel estimates for $-(-\Delta+1)^{\alpha/2}+V_\delta(x)$ \, (Theorem \ref{thm:tp-lower}).
\end{enumerate}

\noindent
First of all, the formulae for $h_\delta$ and $V_\delta$
are not accidental, see Subsection~\ref{subsec:derivation}. They are computed according to a general procedure proposed in \cite{MR3460023}, which we used specifically to the operator $-(-\Delta+1)^{\alpha/2}$. From \cite{MR3460023} it is known that $h_\delta$ is the
so-called super-median. In $(L_1)$ we prove more, namely that $h_\delta$ is actually invariant for the relativistic semigroup perturbed by $V_\delta(x)$. As far as $(L_1)$ is intuitively expected, the proof is technical, especially if $\delta = \frac{d-\alpha}{2}$  (in view of blow-up result this value of $\delta$ may be regarded as critical).
The step $(L_1)$ is inevitable in order to carry out an exact integral analysis and prove $(L_2)$.
Let us point out that in $(L_1)$, as in the whole Section~\ref{sec:our}, we require $\delta \in (0,\tfrac{d-\alpha}{2}]$. To prove $(L_2)$ for $\delta \in (\tfrac{d-\alpha}{2}, d-\alpha)$ additional work is needed, see Remark~\ref{rem:reflect_beta}.

We comment on the steps for the lower heat kernel estimates of the relativistic operator with Coulomb potential:

\begin{enumerate}
\item [$(L_1^*)$] two-sided heat kernel estimates of $-(-\Delta+1)^{\alpha/2}+V_\delta(x)$,
\item[$(L_2^*)$] lower heat kernel estimates of $-(-\Delta+1)^{\alpha/2}+\kappa_\delta |x|^{-\alpha}$. 
\end{enumerate}

\noindent
We note that the potential $V_\delta(x)$  is tailor-made for $-(-\Delta+1)^{\alpha/2}$, which manifests  in $(L_1)$ and  $(L_2)$. Clearly, $(L_1^*)$ is a consequence of $(U_2)$ and $(L_2)$.
In order to prove $(L_2^*)$,
we perturb the heat kernel of $-(-\Delta+1)^{\alpha/2}+V_\delta(x)$
by (possibly singular) potential $\kappa_\delta |x|^{-\alpha}-V_\delta(x)<0$.
To succeed, we heavily rely on $(L_1^*)$ and adapt the new integral method used to prove $(U_2)$.

In Section~\ref{sec:further} we summarize all the estimates and
 explain the blow-up phenomenon, that is, the criticality of the parameter $\delta^*:=\frac{d-\alpha}{2}$.

\subsection{Historical and bibliographical comments}\label{subsec:Hist}
The analysis of heat kernels corresponding to operators with the so-called critical potentials goes back to Baras and Goldstein
\cite{MR742415}, where the existence 
of non-trivial non-negative solutions of the  heat equation $\partial_t u=\Delta u +\kappa |x|^{-2}u$
in $\Rd$
was  established for $0\leq \kappa\leq (d-2)^2/4$,
and non-existence (explosion) for bigger constants $\kappa$.
The operator was also studied by Vazquez  and Zuazua
\cite{MR1760280} in bounded subsets of $\Rd$ as well as in the whole space. Sharp estimates of the heat kernel were obtained by Liskevich and Sobol
\cite{MR1953267} for $0<\kappa<(d-2)^2/4$, and by
Milman and Semenov \cite[Theorem~1]{MR2064932} for
$0<\kappa\leq (d-2)^2/4$, see also \cite{MR2114706}.
Sharp estimates in bounded domains were given by
Moschini and Tesei \cite{MR2328115}
in the subcritical case, and by
Filippas et al.\ \cite{MR2308757} for the critical value $(d-2)^2/4$.
Further generalizations for local operators were given in a series of papers by Metafune et al. that include
\cite{MR3630331, MR4375654}.
Asymptotics of solutions to the Cauchy
problem by self-similar solutions were
proved by Pilarczyk  \cite{MR3020137}.

Another operator that drew attention in this context was the fractional Laplacian 
with Hardy potential $-(-\Delta)^{\alpha/2}+\kappa |x|^{-\alpha}$.
Abdellaoui et al. \cite{MR3479207, MR3492734}
proved that if $\kappa>\kappa^*=2^{\alpha}\Gamma\left(\frac{d+\alpha}{4}\right)/\Gamma\left(\frac{d-\alpha}{4}\right)$, then the operator has no weak positive supersolution, while for $0<\kappa\leq \kappa^*$ non-trivial non-negative solutions exist.
In the latter case 
Bogdan et al.\ \cite{MR3933622} obtained
sharp two-sided estimates
of the heat kernel, namely that it is comparable on $(0,\infty)\times\Rdz\times\Rdz$ to the expression
$$p^0(t,x,y) H^0(t,x)H^0(t,y)\,.$$
Here $p^0$ is the heat kernel of the fractional Laplacian and satisfies on $(0,\infty)\times\Rd\times\Rd$,
$$p^{0}(t,x,y) \approx \left(t^{-d/\alpha} \land \frac{t}{ |x-y|^{d+\alpha}}\right).$$
The estimates from \cite{MR3933622} were a key ingredient in the analysis of Sobolev norms
by Frank et al.\ \cite{MR4206613}, Merz \cite{MR4311597},
and Bui and D'Ancona \cite{AB-PDA-2022}.
They were also used by
Bui and Bui \cite{MR4319351} 
to study 
maximal regularity of the parabolic equation,
and by
Bhakta et al.\ \cite{MR4104934}
to represent weak solutions.
Hardy spaces of the operator were investigated 
by Bui and Nader in \cite{MR4423543}.
Bogdan et al.\ \cite{KB-TJ-PK-DP-2022}
found asymptotics of the heat kernel by studying self-similar solutions.
Cholewa et al.\ \cite{MR4372781}
studied the parabolic equation (also of order greater than 2) in the context of homogeneity.
We also refer to
BenAmor \cite{MR4216547},
 Chen and Weth
\cite{MR4315592},
Jakubowski and Maciocha \cite{TJ-PM-2022}
for the fractional Laplacian with Hardy potential on subsets of $\Rd$,
and to Frank et al.\ \cite{MR2425175},
where the operator $|x|^{-\beta}(-(-\Delta)^{\alpha/2}+\kappa |x|^{-\alpha}-1)$, for certain $\beta>0$, was treated.
Perturbations of the fractional Laplacian,
and more general operators, by negative critical potentials were considered by
Jakubowski and Wang \cite{MR4140086}, Cho et al.\ \cite{MR4163128}, Song et al.~\cite{RS-PW-SW-2022}.

The relativistic  operator 
$\sqrt{-\Delta+m^2}$ is an important object in physical studies, because it
describes the kinetic
energy of a relativistic particle with mass~$m$.
In quantum mechanics it was used in
problems concerning the stability of relativistic matter, in particular, the relativistic operator with Coulomb potential $\sqrt{-\Delta+m^2}-\kappa |x|^{-1}$ is of interest,
see Weder \cite{MR351332, MR0402547}, Herbst \cite{MR436854}, Daubechies and Lieb
\cite{MR763750, MR719430}, 
Fefferman and Llave \cite{MR864658}, Carmona et al. \cite{MR1054115},
Frank et al. \cite{MR2335782, MR4196148}, Lieb and Seiringer \cite{MR2583992}.
We note in passing that in Subsection~\ref{subsec:4td} we provide local in time sharp estimates of the heat kernel 
corresponding to that operator
and in \cite{TJ-KK-KS-2022} we prove 
pointwise estimates of its eigenfunctions.

The relativistic stable operators $- ((-\Delta + m^{2/\alpha})^{\alpha/2}-m)$, $\alpha\in (0,2)$, $m>0$, were investigated by Ryznar \cite{MR1906405}, who obtained Green function and Poisson kernel estimates on bounded domains as well as Harnack inequality.
Kulczycki and Siudeja \cite{MR2231884}
studied intrinsic ultracontractivity of the associated Feynman-Kac semigroup.
After these two papers the topic was intensely studied and resulted in rich literature concerning such operators and corresponding stochastic processes,
see e.g. 
\cite{MR2316878,
MR2386098,
MR2923420, MR2917772,
MR2925182,
MR4116699,
MR4206411,
MR4100844,
MR4381659}.
The undertaken topics 
involve also linear or non-linear mostly elliptic equations or systems of equations 
with or without critical potentials, and with certain focus on the unique continuation properties, see for instance
Fall and Felli \cite{MR3237776, MR3393257},
Secchi \cite{MR4020755},
Ambrosio \cite{MR4343747},
Bueno et al.\ \cite{MR4406449}
and the references therein.
The list is far from being complete.
Results more closely related to the present paper 
can be found in Grzywny et al.\ \cite{TG-KK-PS-2022}, where
perturbations of non-local operators by a proper Kato class potentials are considered, and 
include relativistic stable operators.

\subsection{Notation and organization}

We use $:=$ to indicate the definition.
As usual $a \land b := \min\{a,b\}$, $a \vee b := \max\{a,b\}$. For a function $f(x)$ which is radial, i.e. its value depends only on $r=|x|$, we use the same letter to denote its profile $f(r):=f(x)$.
We write $c=c(a,\ldots)$ to indicate that the constant $c$ depends only on the listed parameters.
We also recall that $\Rdz=\Rd\setminus \{0\}$.
In certain parts of the presented theory the functions, series or integrals are allowed to attain the infinite value. On the other hand, we often avoid it by restricting the domain to $\Rdz$. It is though sometimes replaced by $\Rd$, for instance in the integration regions, since one point is of the Lebesgue measure zero.
For $n\in\NN$ we denote by $C_0(\R^n)$ the space of continuous functions $f\colon \R^n\to \R$ that vanish at infinity, and $C_c^{\infty}(\R^n)$
are smooth functions with 
compact support.

The paper is organized as follows.
The preliminary Section~\ref{sec:Prel} is divided into four parts.
First in Subsection~\ref{sec:Sch-P} we introduce the general framework of Schr{\"o}dinger perturbations of transition densities, which is used in the paper. 
In Subsection~\ref{subsec:rel-op} we provide the context for the relativistic stable operator.
Next, in Subsection~\ref{subsec:derivation} we present computations that give rise to $V_\delta$ and $h_\delta$ and we prove Corollary~\ref{thm:3}. Finally, in Subsection~\ref{subsec:p-analysis} we study properties of the potential $V_\delta$.
Section~\ref{sec:largest}
is mainly devoted to the analysis of the heat kernel corresponding to the fractional Laplacian perturbed by $V_\delta$. In the same section we prove Proposition~\ref{prop:weak_sol}.
In Section~\ref{sec:our}, which consists of four parts, we focus on the heat kernel
corresponding to the relativistic stable operator perturbed by $V_\delta$ for $\delta\in (0,\tfrac{d-\alpha}{2}]$.
In subsequent subsections we show upper bounds, 
invariance of $h_\delta$ with respect to perturbed semigroup and lower bounds.
In Subsection~\ref{subsec:thm1-proof} we
prove Theorem~\ref{thm:main-1}.
In Section~\ref{sec:further}
we extend Theorem~\ref{thm:main-1}
to other transition densities and potentials, and we discuss blow-up phenomenon.
In Appendix~\ref{sec:app}
we collect known properties of the modified Bessel function of the second kind and of the heat kernel 
and the L{\'e}vy measure
corresponding to the relativistic stable operator.

\section{Preliminaries}\label{sec:Prel}

\subsection{Schr{\"o}dinger perturbation}\label{sec:Sch-P}

The following subsection is general and independent of more specific framework of Section~\ref{sec:main}.
Let
$p\colon (0,\infty)\times\Rdz \times\Rdz \to [0,\infty]$
be a Borel function satisfying for all $0<s<t$ and $x,y\in\Rdz$,
$$
\int_{\R^d} p(s,x,z) p(t-s,z,y)\,\d{z} = p(t,x,y)\,.
$$
We call $p$ a transition density.
For a Borel function $q \colon \Rdz \to [0,\infty]$
we define the Schr{\"o}dinger perturbation of 
$p$ by $q$ as
$$
\tilde{p}_q =\sum_{n=0}^{\infty}p_n\,,
$$
where, for $t>0$, $x,y\in\Rdz$, we let $p_0(t,x,y)=p(t,x,y)$ and
$$
p_n(t,x,y)=\int_0^t\int_{\R^d}p_{n-1}(s,x,z) q(z)p(t-s,z,y)\,\d{z}\d{s}\,,\qquad n\geq 1\,.
$$
From the general theory 
developed in \cite{MR2457489},
based solely on the algebraic structure of the above series and the Fubini-Tonelli theorem, the Duhamel's formula,
\begin{align}\label{eq:Duh}
\tilde{p}_q(t,x,y)=p(t,x,y)+\int_0^t\int_{\R^d} \tilde{p}_q(s,x,z)q(z)p(t-s,z,y)\,\d{z}\d{s}\,,
\end{align}
and the Chapman-Kolmogorov equation,
\begin{align}\label{eq:C-K}
\int_{\R^d} \tilde{p}_q(s,x,z) \tilde{p}_q(t-s,z,y)\,\d{z} = \tilde{p}_q(t,x,y)\,,
\end{align}
hold.
In particular, $\tilde{p}$ is a transition density.
Furthermore, for every $q_1,q_2 \geq 0$ we have
\begin{align}\label{eq:pert-of-pert}
\tilde{p}_{q_1+q_2} = \widetilde{(\tilde{p}_{q_1})}_{q_2}\,.
\end{align}
Namely, the perturbation of $p$ by $q_1+q_2$ may be realized in two steps: first by obtaining $\tilde{p}_{q_1}$, and then by  perturbing $\tilde{p}_{q_1}$ by $q_2$.
Suppose that $\rho^1$ and $\rho^2$ are two transition densities
such that $\rho^1 \leq c \rho^2$
on $(0,T]\times\Rdz \times\Rdz$ for some $T>0$ and $c\geq 0$. Then on $(0,T]\times\Rdz \times\Rdz $ we have
\begin{align}\label{ineq:tran_den}
\tilde{\rho}^1_q \leq c\, \tilde{\rho}^2_{c\cdot q}\,.
\end{align}

We also consider (similarly to above) 
 perturbations by singed $q$. 
In that case we have to make sure that the series converges properly and that it is non-negative.
For convenience we merge arguments used in
\cite{MR2457489} in such a way that fits well our setting and applications.
\begin{lemma}\label{lem:Kato-signed}
Suppose that $q_1\geq 0$ and $\tilde{p}_{q_1}(t,x,y)<\infty$ for every $t>0$, $x,y \in \Rdz$. Assume that there are  $\epsilon\in [0,1/2)$ \textup{(}for $q_2\leq 0$ we only require that $\epsilon\in[0,1)$\textup{)} and $\tau>0$ such that for all $t\in (0,\tau]$, $x,y\in \Rdz$,
$$
\int_0^t\int_{\Rd} \tilde{p}_{q_1}(s,x,z)|q_2(z)|\tilde{p}_{q_1}(t-s,z,y)\,\d{z}\d{s}\leq \epsilon \, \tilde{p}_{q_1}(t,x,y)\,.
$$
Then
$\tilde{p}_{q_1+q_2} = \widetilde{(\tilde{p}_{q_1})}_{q_2}$
is a finite transition density. Furthermore, for every $T>0$ there exists a constant
$c=c(\epsilon, \tau, T)>0$
such that 
$\tilde{p}_{q_1+q_2} \geq c \tilde{p}_{q_1}$
on $(0,T]\times \Rdz\times \Rdz$.
\end{lemma}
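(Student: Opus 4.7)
The plan is to realize $\widetilde{(\tp_{q_1})}_{q_2}$ as an absolutely convergent (possibly signed) series, derive positivity from it, and then promote a short-time lower bound on $(0,\tau]$ to the full window $(0,T]$ via the Chapman--Kolmogorov equation. Write $\rho := \tp_{q_1}$, put $\rho_0 := \rho$ and
\begin{align*}
\rho_{n+1}(t,x,y) := \int_0^t \int_{\Rd} \rho_n(s,x,z) \, q_2(z) \, \rho(t-s,z,y) \, \d{z} \, \d{s},
\end{align*}
and denote $\widetilde{\rho}_{q_2} := \sum_{n=0}^\infty \rho_n$. The main tool will be the positivity-preserving linear operator
\begin{align*}
(Tf)(t,x,y) := \int_0^t \int_{\Rd} f(s,x,z) \, |q_2(z)| \, \rho(t-s,z,y) \, \d{z} \, \d{s},
\end{align*}
for which $|\rho_n| \leq T^n \rho$ pointwise, and the hypothesis reads $T\rho \leq \epsilon \rho$ on $(0,\tau] \times \Rdz \times \Rdz$. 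Monotonicity of $T$ together with a direct induction will then yield
\begin{align*}
|\rho_n(t,x,y)| \leq \epsilon^n \rho(t,x,y), \qquad t \in (0,\tau], \; x,y \in \Rdz,
\end{align*}
so that $\widetilde{\rho}_{q_2}$ converges absolutely on $(0,\tau]$ with $|\widetilde{\rho}_{q_2}| \leq \rho / (1-\epsilon)$.

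For the pointwise lower bound on $(0,\tau]$ I would split into two cases. In the signed case $\epsilon < 1/2$, the triangle inequality is already sharp enough:
\begin{align*}
\widetilde{\rho}_{q_2} \geq \rho - \sum_{n \geq 1} |\rho_n| \geq \rho - \frac{\epsilon}{1-\epsilon} \rho = \frac{1-2\epsilon}{1-\epsilon} \, \rho > 0.
\end{align*}
The case $q_2 \leq 0$ with only $\epsilon \in [0,1)$ is subtler, and I expect it to be the main obstacle: the same triangle bound would require $\epsilon < 1/2$ and cannot recover the full range. Here $\rho_n = (-1)^n T^n \rho$, and the idea is to pair the alternating series
\begin{align*}
\widetilde{\rho}_{q_2} = \sum_{k=0}^\infty \bigl( T^{2k} \rho - T^{2k+1} \rho \bigr).
\end{align*}
Because $T$ preserves the pointwise order on nonnegative kernels, the bound $T\rho \leq \rho$ self-propagates by induction: $T^{n+1} \rho = T(T^n \rho) \leq T(T^{n-1}\rho) = T^n \rho$, so $\{T^n \rho\}$ is pointwise non-increasing. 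Every paired summand is therefore nonnegative, and keeping only the leading pair already gives
\begin{align*}
\widetilde{\rho}_{q_2} \geq \rho - T\rho \geq (1-\epsilon) \rho \qquad \text{on } (0,\tau].
\end{align*}

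With these bounds in hand I would verify the Duhamel identity \eqref{eq:Duh} and the Chapman--Kolmogorov equation \eqref{eq:C-K} for $\widetilde{\rho}_{q_2}$ by appealing to the purely algebraic framework of \cite{MR2457489}; the Fubini rearrangements there are fully legitimate once absolute convergence is established. In particular $\widetilde{\rho}_{q_2}$ is a finite transition density; by construction it is $\widetilde{(\tp_{q_1})}_{q_2}$, which coincides with $\tp_{q_1+q_2}$ through \eqref{eq:pert-of-pert} in the overlap $q_2 \geq 0$. Finally, to promote the short-time lower bound to an arbitrary $T>0$, fix $t \in (0,T]$, set $N := \lceil T/\tau \rceil$, and iterate \eqref{eq:C-K} $N-1$ times on equal slices of length $t/N \leq \tau$:
\begin{align*}
\widetilde{\rho}_{q_2}(t,x,y) \geq c_0^N \int_{(\Rd)^{N-1}} \rho(t/N,x,z_1) \cdots \rho(t/N,z_{N-1},y) \, \d{z_1} \cdots \d{z_{N-1}} = c_0^N \, \rho(t,x,y),
\end{align*}
where $c_0$ is the short-time constant $(1-2\epsilon)/(1-\epsilon)$ or $1-\epsilon$ extracted above. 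This yields the desired $c = c(\epsilon,\tau,T) > 0$ and completes the argument.
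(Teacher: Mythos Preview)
Your approach is essentially the same as the paper's: both arguments work with the perturbation series of $\tilde p_{q_1}$ by $q_2$, control it via the Kato-type hypothesis, extract short-time lower bounds, and extend by Chapman--Kolmogorov. Your constants $(1-2\epsilon)/(1-\epsilon)$ and $1-\epsilon$ coincide with the paper's $(1-\epsilon/(1-\epsilon))$ and $1-\epsilon$; your alternating-series pairing for $q_2\le 0$ is a spelled-out version of what the paper cites as \cite[(25)]{MR2457489}.

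One loose end: the sentence ``coincides with $\tilde p_{q_1+q_2}$ through \eqref{eq:pert-of-pert} in the overlap $q_2\ge 0$'' does not actually establish the identity $\tilde p_{q_1+q_2}=\widetilde{(\tilde p_{q_1})}_{q_2}$ for \emph{signed} $q_2$, which is part of the conclusion. Equation \eqref{eq:pert-of-pert} is stated only for nonnegative perturbations, and the two series being compared are genuinely different rearrangements of a doubly-indexed array. The paper closes this by invoking \cite[Lemma~8]{MR2457489}, whose hypothesis is precisely the absolute summability you already have from $\widetilde{(\tilde p_{q_1})}_{|q_2|}\le c\,\tilde p_{q_1}$ on $(0,T]$ (via \cite[Theorem~2]{MR2457489}); this same finiteness for all $T$ is also what makes your appeal to Chapman--Kolmogorov beyond $(0,\tau]$ legitimate, so it is worth stating explicitly rather than only on $(0,\tau]$.
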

\begin{proof}
Clearly,
$\tilde{p}_{q_1}$
is a finite transition density, therefore
by \cite[Theorem~2]{MR2457489}
the series 
$\widetilde{(\tilde{p}_{q_1})}_{q_2}$
is a transition density.
By \eqref{eq:pert-of-pert} and \cite[Theorem~2]{MR2457489}
we have
$\tilde{p}_{q_1+|q_2|} = \widetilde{(\tilde{p}_{q_1})}_{|q_2|}\leq c \tilde{p}_{q_1}<\infty$
on $(0,T]\times\Rdz\times\Rdz$ for any $T>0$.
This guarantees the absolute convergence of the series 
and
by \cite[Lemma~8]{MR2457489}
gives $\tilde{p}_{q_1+q_2} = \widetilde{(\tilde{p}_{q_1})}_{q_2}$.
Now, for $q_2\leq 0$, like in \cite[(25)]{MR2457489}
we have
$$
\widetilde{(\tilde{p}_{q_1})}_{q_2}
\geq (1-\epsilon) \,\tilde{p}_{q_1} \,,
$$
and for general (signed) $q_2$ we get $$\widetilde{(\tilde{p}_{q_1})}_{q_2}
\geq (1 - \epsilon/(1-\epsilon))\,\tilde{p}_{q_1}\,,
$$
on $(0,\tau]\times\Rdz\times\Rdz$, which extends to $(0,T]\times \Rdz\times \Rdz$ by the Chapman-Kolmogorov equation. 
\end{proof}

\subsection{The relativistic stable operator}\label{subsec:rel-op}
We briefly recall fundamental properties of the
relativistic stable operator $- ((-\Delta + m^{2/\alpha})^{\alpha/2}-m)$, where $\alpha\in(0,2)$, $m\geq 0$. In fact, we focus on the operator
\begin{align}\label{def:op-m}
-(-\Delta+m^{2/\alpha})^{\alpha/2} f =  \mathcal{F}^{-1} (-\psi^m \hat{f})\,, \qquad f\in C_c^{\infty}(\Rd)\,,
\end{align}
where
\begin{align*}
\psi^m(\xi)=(|\xi|^2+m^{2/\alpha})^{\alpha/2}\,.
\end{align*}
The function $-\psi^m$ is called the symbol or the Fourier multiplier of the operator \eqref{def:op-m}.
The value $\psi^m(0)=m$ is known as the killing rate.
We refer the reader to
\cite{MR3156646},
\cite{MR1739520},
\cite{MR2978140}
for a broader perspective and details
of the material presented below.
It is well known
that the operator \eqref{def:op-m} uniquely generates a translation invariant
Feller semigroup
$(P_t^m)_{t\geq 0}$
or (equivalently) 
vaguely continuous convolution semigroup of measures $p_t^m(dx)$
or (equivalently) a L{\'e}vy process $(X_t^m)_{t\geq 0}$ with exponential killing rate $m$.
For $f\in C_0(\Rd)$ we have that
$$P_t^m f(x)=\int_{\Rd} f(x+z)p_t^m(dz)=\mathbb{E}f(x+X_t^m) \qquad \mbox{and}\qquad
\widehat{p^m_t}(\xi)=e^{-t\psi^m(\xi)}\,.$$
Due to the latter equality,
$\psi^m$ is also referred to as the characteristic exponent and admits the L{\'e}vy-Khintchine representation
$\psi^m(\xi)=m+\int_{\Rd\setminus\{0\}} \big( 1-\cos\left<\xi,z\right>\!\big) \nu^m(z)\d{z}
$, where
\begin{align}\label{def:nu_def}
\nu^m(x)= \frac{2^{\frac{\alpha-d}{2}+1}}{\pi^{d/2}|\Gamma(-\frac{\alpha}{2})|}\, \frac{m^{\frac{d+\alpha}{\alpha}} K_{\frac{d+\alpha}{2}}(m^{\frac1\alpha}|x|)}{(m^{\frac1\alpha}|x|)^{\frac{d+\alpha}{2}}}\,, \quad m>0\,,
\end{align}
and 
$$
\nu^0(x)= \frac{ 2^{\alpha}\Gamma((d+\alpha)/2)}{\pi^{d/2}|\Gamma(-\alpha/2)|} |x|^{-d-\alpha}$$
may be obtained by taking the limit as $m\to 0^+$, see \eqref{eq:lim_K}.
The measure $\nu^m(x)\d{x}$ is called the L{\'e}vy measure.
Since $\widehat{p^m_t}$ is integrable we have
$p^m_t(dx)=p_t^m(x)\d{x}$
and the heat kernel $p^m(t,x,y)=p_t^m(y-x)$ 
corresponding to the operator \eqref{def:op-m}
may be recovered from the symbol by using
the inverse Fourier transform
\begin{align}\label{def:p-m}
p^m(t,x,y)=(2\pi)^{-d}\int_{\Rd} e^{-i \left<y-x,z\right>}\, e^{-t\, \psi^m(z)} \,\d{z}\,.
\end{align}
It will be convenient for us to use an alternative 
equivalent approach to the
semigruop $(P_t^m)_{t\geq 0}$ (or the process $(X_t^m)_{t\geq 0}$)
by the subordination technique.
Before we move further, we note that 
$(P_t^m)_{t\geq 0}$ is 
a strongly continuous contraction semigroup on $C_0(\Rd)$, its infinitesimal generator  
has $C_c^{\infty}(\Rd)$ as a core and for $f\in C_c^{\infty}(\Rd)$ it coincides  with
the operator \eqref{def:op-m} which
is a non-local integro-differential operator 
\begin{align*}
-(-\Delta+m^{2/\alpha})^{\alpha/2} f(x)=-mf(x)+
P.V. 
\int_{\Rd\setminus \{0\}} \big( f(x+z)-f(x)\big) \nu^m(z)\d{z}\,.
\end{align*}
Let $\eta_t(s)$ be the probability density of the $\alpha/2$-stable subordinator
and  
$$\Pi(\d{s})= \frac{\alpha/2}{\Gamma(1-\alpha/2)}\frac{\mathds{1}_{s>0}}{s^{1+\alpha/2}}\d{s}$$
be the corresponding L{\'e}vy measure.
The Laplace transform of
$e^{-m^{2/\alpha}s}\eta_t(s)$ is equal to
$$
\int_0^{\infty} e^{-\lambda s}e^{-m^{2/\alpha}s} \eta_t(s)\,\d{s}=
e^{-t\phi^m(\lambda)}\,,
$$
where the Laplace exponent $\phi^m$
is a Bernstein function and
admits the following
L{\'e}vy-Khintchine representation
$$\phi^m(\lambda)=(\lambda+m^{2/\alpha})^{\alpha/2}=
m+\int_0^{\infty}\left(1-e^{-\lambda s}\right)e^{-m^{2/\alpha}s}\, \Pi(\d{s})\,.
$$
Let $g_t(x)=(4\pi t)^{-d/2}\exp\{-|x|^2/(4t)\}$  be the Gauss-Weierstrass kernel.
The Bochner subordination of the Gaussian semigroup
with respect to $\phi^m$
results in the following relations 
$$
\psi^m(\xi)=\phi^m(|\xi|^2)\,,
\qquad \quad
\nu^m(x)=\int_0^{\infty} g_s(x) e^{-m^{2/\alpha}s}\,\Pi(\d{s})\,,
$$
and
\begin{align}\label{def:p^m}
p^m(t,x,y)=\int_0^{\infty} g_s(x-y) e^{-m^{2/\alpha} s} \eta_t(s)\,\d{s}\,,
\qquad t>0,\, x,y\in\R^d\,.
\end{align}
Note that it is a sub-probabilistic kernel, namely $\int_{\Rd} p^m(t,x,y)\d{y}=e^{-mt}$, $t>0$.
Clearly, it is also a finite transition density
as defined in Subsection~\ref{sec:Sch-P}.
In Subsection~\ref{sec:rel-den} we collect further important properties of $\nu^1(x)$ and $p^1(t,x,y)$.

\subsection{Derivation of $V_\delta$ and $h_\delta$}\label{subsec:derivation}

As already announced in the introduction we use the approach proposed in \cite{MR3460023}.
We assume that $\alpha\in (0,2\land d)$ and $m\geq 0$.
For $x\in\Rdz$ and $\beta \in (0,d)$
we let
$$
h^m_{\beta}(x):=\int_0^{\infty}t^{\frac{d-\alpha-\beta}{\alpha}} p^m(t,x,0)\,\d{t}\,,
$$
and for $\beta \in (-\alpha,d-\alpha)$,
$$
\bar{h}^m_{\beta}(x):=\int_0^{\infty} \left( t^{\frac{d-\alpha-\beta}{\alpha}}\right)'\! p^m(t,x,0)\,\d{t}=  \frac{d-\alpha-\beta}{\alpha} h^m_{\beta+\alpha}(x)\,.
$$
Finally, for $\beta \in (0,d-\alpha)$ we define
$$
V_{\beta}^m(x):= \frac{\bar{h}^m_{\beta}(x)}{h^m_{\beta}(x)}
=  \frac{d-\alpha-\beta}{\alpha} \frac{ h^m_{\beta+\alpha}(x)}{ h^m_{\beta}(x)}\,.
$$
\begin{lemma}\label{lem:scal-m}
For $\beta\in(0,d-\alpha)$, $m>0$ and $x,y\in\Rdz$ we have for 
$\bar{f}(x)=f(x/m^{1/\alpha})$, $f\in C_c^{\infty}(\Rd)$,
$$\nu^m(x)= m^{\frac{d+\alpha}{\alpha}} \nu^1(m^{\frac1\alpha}x)\,,
\qquad (-\Delta+m^{2/\alpha})^{\alpha/2} f(x)
=m\left((-\Delta+1)^{\alpha/2} \bar{f}\right)(m^{\frac1\alpha}x)\,,
$$
and
$$
p^m(t,x,y)= m^{\frac{d}{\alpha}} p^1(mt,m^{\frac{1}{\alpha}} x, m^{\frac{1}{\alpha}}y)\,, \qquad h^m_{\beta}(x)= m^{\frac{\beta}{\alpha}} h^1_{\beta}(m^{\frac{1}{\alpha}} x)\,,
\quad V^m_{\beta}(x)=m V^1_{\beta}(m^{\frac{1}{\alpha}}x)\,.
$$
\end{lemma}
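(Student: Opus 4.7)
The plan is to derive all five identities from a single scaling of the Fourier symbol,
$$\psi^m(\xi) = (|\xi|^2+m^{2/\alpha})^{\alpha/2} = m\,\psi^1(m^{-1/\alpha}\xi),$$
obtained by factoring $m^{2/\alpha}$ inside the parentheses and using $(m^{2/\alpha})^{\alpha/2}=m$. Everything else is bookkeeping: the $\nu^m$, operator and $p^m$ identities follow by combining this symbol scaling with the respective Fourier or subordination representations, and the $h^m_\beta$ and $V^m_\beta$ identities are consequences of the $p^m$ scaling plus a change of time variable.

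The identity for $\nu^m$ is immediate from \eqref{def:nu_def}: the right-hand side there already displays the scaled variable $m^{1/\alpha}|x|$, so one simply reads off the prefactor $m^{(d+\alpha)/\alpha}$. For the operator identity I would work in Fourier variables: the dilation $\bar f(x)=f(x/m^{1/\alpha})$ satisfies $\hat{\bar f}(\xi)=m^{d/\alpha}\hat f(m^{1/\alpha}\xi)$, so substituting $\xi = m^{1/\alpha}\eta$ in the inverse Fourier representation of $(-\Delta+m^{2/\alpha})^{\alpha/2}f(x)$ and using the symbol scaling turns the integrand into exactly $m$ times the integrand producing $((-\Delta+1)^{\alpha/2}\bar f)(m^{1/\alpha}x)$. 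The same substitution $z=m^{1/\alpha}w$ in \eqref{def:p-m} yields the $p^m$ identity once the Jacobian $m^{d/\alpha}$ is pulled out and the phase becomes $e^{-i\langle m^{1/\alpha}(y-x),w\rangle}$.

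With the $p^m$-scaling in hand, I would insert it into the definition of $h^m_\beta$ and change variables $s=mt$; combining the powers of $m$ from $(s/m)^{(d-\alpha-\beta)/\alpha}$, from the Jacobian $ds/m$, and from the pulled-out $m^{d/\alpha}$ gives exponent $-(d-\alpha-\beta)/\alpha-1+d/\alpha=\beta/\alpha$, which is the claim. The $V^m_\beta$ identity is then a direct consequence of its defining ratio $V^m_\beta=\frac{d-\alpha-\beta}{\alpha}\,h^m_{\beta+\alpha}/h^m_\beta$, which produces the factor $m^{(\beta+\alpha)/\alpha-\beta/\alpha}=m$. There is no serious obstacle here; the only delicate point is keeping careful track of the various powers of $m$ that appear in the time integral for $h^m_\beta$.
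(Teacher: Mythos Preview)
Your argument is correct. The route you take, however, differs from the paper's. You organize everything around the Fourier symbol scaling $\psi^m(\xi)=m\,\psi^1(m^{-1/\alpha}\xi)$ and then read off the operator identity and the $p^m$ scaling directly from the Fourier representations \eqref{def:op-m} and \eqref{def:p-m} via a substitution in the frequency variable. The paper instead obtains the operator identity from the $\nu^m$ scaling combined with the integro-differential representation of $-(-\Delta+m^{2/\alpha})^{\alpha/2}$, and it derives the $p^m$ scaling from the subordination formula \eqref{def:p^m} together with the self-similarity of the $\alpha/2$-stable subordinator, $\eta_{st}(u)=s^{-2/\alpha}\eta_t(s^{-2/\alpha}u)$. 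Your approach has the virtue of being uniform---one substitution handles both the operator and the heat kernel---and it avoids invoking the subordinator scaling as a separate input. The paper's route, on the other hand, ties into the subordination representation that is used elsewhere (e.g.\ in Lemma~\ref{lem:h-expl} and Lemma~\ref{lem:toLm}), so it keeps the toolbox consistent. Either way the $h^m_\beta$ and $V^m_\beta$ identities then fall out by the time change $s=mt$ and the defining ratio, exactly as you describe.
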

\begin{proof}
The first equality is trivial.
The second one follows from the first and representation of the operator as a non-local integro-differential operator.
Recall that $\eta_{st}(u)=s^{-2/\alpha} \eta_t(s^{-2/\alpha}u)$.
Together with \eqref{def:p^m} this leads to the second line of equalities above.
\end{proof}

In view of Lemma~\ref{lem:scal-m}, in what follows we fix $m=1$ and we remove it from the notation, 
$$
\boxed{
\, \nu(x)=\nu^1(x)\,,\qquad p(t,x,y)=p^1(t,x,y)\,, \qquad h_{\beta}(x)=h^1_{\beta}(x)\,,
\qquad V_{\beta}(x)=V^1_{\beta}(x)\,.
\,
}
$$

\begin{lemma}\label{lem:h-expl}
For $\beta\in (0,d)$, $x\in\Rdz$ we have
$$
h_{\beta}(x)=\frac{2^{\frac{\beta}{2}+1}\Gamma\left(\frac{d-\beta}{\alpha}\right)}{(4\pi)^{d/2} \Gamma\left(\frac{d-\beta}{2}\right)} \,|x|^{-\frac{\beta}{2}} K_{\frac{\beta}{2}}(|x|)\,.
$$
The function $h_\beta(r)$ is decreasing in $r>0$.
\end{lemma}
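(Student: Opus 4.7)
My plan is to reduce the $t$-integral defining $h_\beta$ to an $s$-integral via the subordination formula, then recognise the resulting expression as the standard integral representation of $K_{\beta/2}$. Start by substituting
\begin{align*}
p(t,x,0)=\int_0^{\infty} g_s(x)\,e^{-s}\,\eta_t(s)\,\d s
\end{align*}
into the definition of $h_\beta$ and apply Fubini--Tonelli (the integrand is non-negative, so this is free), giving
\begin{align*}
h_\beta(x)=\int_0^\infty g_s(x)\,e^{-s}\left(\int_0^\infty t^{(d-\alpha-\beta)/\alpha}\,\eta_t(s)\,\d t\right)\d s.
\end{align*}

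For the inner integral, I would use the scaling $\eta_t(s)=t^{-2/\alpha}\eta_1(t^{-2/\alpha}s)$ of the $\alpha/2$-stable subordinator and substitute $v=t^{-2/\alpha}s$. A routine bookkeeping of exponents yields
\begin{align*}
\int_0^\infty t^{(d-\alpha-\beta)/\alpha}\eta_t(s)\,\d t=\frac{\alpha}{2}\,s^{(d-\beta)/2-1}\int_0^\infty v^{-(d-\beta)/2}\eta_1(v)\,\d v.
\end{align*}
The remaining moment is finite since $\beta\in(0,d)$, and can be computed from the Laplace transform $\E e^{-\lambda S_1}=e^{-\lambda^{\alpha/2}}$ via $\E S_1^{-\rho}=\Gamma(\rho)^{-1}\int_0^\infty \lambda^{\rho-1}e^{-\lambda^{\alpha/2}}\d\lambda$, which with $\rho=(d-\beta)/2$ collapses (after the substitution $u=\lambda^{\alpha/2}$) to $\frac{2\Gamma((d-\beta)/\alpha)}{\alpha\,\Gamma((d-\beta)/2)}$. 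The two factors of $\alpha/2$ cancel and the inner integral becomes a clean $\frac{\Gamma((d-\beta)/\alpha)}{\Gamma((d-\beta)/2)}\,s^{(d-\beta)/2-1}$.

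Plugging in $g_s(x)=(4\pi s)^{-d/2}e^{-|x|^2/(4s)}$, the outer integral reduces to
\begin{align*}
(4\pi)^{-d/2}\int_0^\infty s^{-\beta/2-1}\,e^{-s-|x|^2/(4s)}\,\d s,
\end{align*}
which, by the classical representation $K_\nu(z)=\tfrac{1}{2}(z/2)^\nu\int_0^\infty t^{-\nu-1}e^{-t-z^2/(4t)}\d t$ with $\nu=\beta/2$, $z=|x|$, equals $2^{1+\beta/2}|x|^{-\beta/2}K_{\beta/2}(|x|)$ up to the $(4\pi)^{-d/2}$ factor. Combining all constants reproduces the stated formula exactly.

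For the monotonicity of $h_\beta(r)$ in $r>0$, I would invoke the standard Bessel derivative identity $\frac{d}{dr}\!\bigl(r^{-\nu}K_\nu(r)\bigr)=-r^{-\nu}K_{\nu+1}(r)$ with $\nu=\beta/2$; since $K_{\beta/2+1}>0$, the profile is strictly decreasing. (Alternatively, radial monotonicity follows directly from the defining integral, because $p(t,\cdot,0)$ is radially non-increasing, being a mixture of Gauss--Weierstrass kernels.) The only delicate point in the entire argument is the Fubini swap combined with the change of variables leading to the finite moment of $S_1$; once those are tracked carefully, the result is immediate, so I do not anticipate a serious obstacle — the main care is in checking that $\beta\in(0,d)$ makes the moment $\E S_1^{-(d-\beta)/2}$ well-defined, which it does.
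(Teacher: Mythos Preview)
Your argument is correct and follows essentially the same route as the paper: subordination formula, Fubini--Tonelli, evaluation of the inner $t$-integral, and identification of the resulting $s$-integral with the integral representation of $K_{\beta/2}$. The only difference is that the paper quotes the identity $\int_0^{\infty} t^{(d-\alpha-\beta)/\alpha}\eta_t(s)\,\d t=\frac{\Gamma((d-\beta)/\alpha)}{\Gamma((d-\beta)/2)}\,s^{(d-\beta)/2-1}$ from \cite[(23)]{MR3460023}, whereas you derive it directly via scaling and the moment formula for $S_1$; your monotonicity argument via $\frac{\d}{\d r}\bigl(r^{-\nu}K_\nu(r)\bigr)=-r^{-\nu}K_{\nu+1}(r)$ is a clean alternative to the paper's observation that both factors $r^{-\beta/2}$ and $K_{\beta/2}(r)$ are decreasing.
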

\begin{proof}
We have
$\int_0^{\infty} t^{(d-\alpha-\beta)/\alpha} \eta_t(s)\,\d{t}=\frac{\Gamma\left(\frac{d-\beta}{\alpha}\right)}{\Gamma\left(\frac{d-\beta}{2}\right)}s^{\frac{d-\beta}{2}-1}$ by \cite[(23)]{MR3460023}.
Thus
\begin{align*}
h_{\beta}(x)
&=\int_0^{\infty}  \int_0^{\infty} g_s(x) e^{-s} t^{(d-\alpha-\beta)/\alpha} \eta_t(s)\,\d{s}\d{t}
=\frac{\Gamma\left(\frac{d-\beta}{\alpha}\right)}{\Gamma\left(\frac{d-\beta}{2}\right)}
\int_0^{\infty} g_s(x) e^{-s}
s^{\frac{d-\beta}{2}-1} \,\d{s}\\
&= \frac{\Gamma\left(\frac{d-\beta}{\alpha}\right)}{(4\pi)^{d/2}\Gamma\left(\frac{d-\beta}{2}\right)}
\int_0^{\infty} e^{-s-\frac{|x|^2}{4s}}
s^{-\frac{\beta}{2}-1} \,\d{s}\,.
\end{align*}
The final formula follows from the definition of $K_{\nu}$, see Subsection~\ref{sec:ap-Bessel}.
Since $K_{\beta/2}(r)$ is decreasing so is $h_\beta(r)$, see Subsection~\ref{sec:ap-Bessel}.
\end{proof}

\begin{remark}\label{rem:VisV}
It is now clear that $V_\delta=\bar{h}_\delta/h_\delta$ as constructed above  coincides with~\eqref{def:V}.
\end{remark}

Recall that
$\tilde{p}$ is the 
Schr{\"o}dinger perturbation of
$p$ by $V_\delta$ for $\delta\in (0,d-\alpha)$ according  to Subsection~\ref{sec:Sch-P}. 
We show how to recover the case of $m>0$ 
from that with $m=1$.
\begin{lemma}\label{lem:scal-m-tp}
Let $\tilde{p}^{\,m}$ be
the 
Schr{\"o}dinger perturbation of
$p^m$ by $V^m_\delta$,
and $(p^m)_n$ be the summand of the corresponding series. Then
for all $t>0$, $x,y\in\Rdz$, $n=0,1,\ldots$
and $m>0$,
$$
(p^m)_n(t,x,y)= m^{\frac{d}{\alpha}}\, p_n (mt,m^{\frac{1}{\alpha}} x, m^{\frac{1}{\alpha}}y)\,,
$$
and
$$
\tilde{p}^{\,m}(t,x,y)= m^{\frac{d}{\alpha}}\, \tilde{p} (mt,m^{\frac{1}{\alpha}} x, m^{\frac{1}{\alpha}}y)\,.
$$
\end{lemma}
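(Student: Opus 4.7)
The plan is to prove the identity for each $(p^m)_n$ by induction on $n$, and then obtain the statement for $\tilde{p}^{\,m}$ by summing the series term-by-term (the series defining $\tilde{p}^{\,m}$ converges in $[0,\infty]$ so the sum commutes with the scaling).

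For the base case $n=0$ we simply have $(p^m)_0 = p^m$ and $p_0 = p$, and the required identity is exactly the scaling relation for $p^m$ already established in Lemma~\ref{lem:scal-m}. For the inductive step I would start from the recursive formula
\begin{equation*}
(p^m)_n(t,x,y)=\int_0^t\!\!\int_{\Rd} (p^m)_{n-1}(s,x,z)\,V^m_\delta(z)\,p^m(t-s,z,y)\,\d{z}\d{s},
\end{equation*}
apply the inductive hypothesis to $(p^m)_{n-1}$, and substitute the scaling identities from Lemma~\ref{lem:scal-m} for $p^m$ and $V^m_\delta$. After this replacement, the integrand involves $p_{n-1}(ms,m^{1/\alpha}x,m^{1/\alpha}z)$, the factor $mV^1_\delta(m^{1/\alpha}z)$, and $p(m(t-s),m^{1/\alpha}z,m^{1/\alpha}y)$, times the prefactor $m^{d/\alpha}\cdot m\cdot m^{d/\alpha}$.

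The only remaining computation is the change of variables $u=ms$ (giving $\d{s}=m^{-1}\d{u}$ and new upper limit $mt$) and $w=m^{1/\alpha}z$ (giving $\d{z}=m^{-d/\alpha}\d{w}$). A careful count shows that the Jacobian factors $m^{-1}\cdot m^{-d/\alpha}$ combine with the three prefactors $m^{d/\alpha},m,m^{d/\alpha}$ to leave a single $m^{d/\alpha}$ in front, and the resulting integral is exactly the defining convolution for $p_n(mt,m^{1/\alpha}x,m^{1/\alpha}y)$. This closes the induction.

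Finally, since $\tilde{p}^{\,m}=\sum_{n\ge 0}(p^m)_n$ and $\tilde{p}=\sum_{n\ge 0}p_n$ as non-negative series (convergent in the extended sense, as guaranteed by the Schr\"odinger perturbation framework of Subsection~\ref{sec:Sch-P}), summing the pointwise identity for $(p^m)_n$ over $n$ yields the asserted scaling for $\tilde{p}^{\,m}$. I do not anticipate a genuine obstacle here; the only point that requires attention is the bookkeeping of the Jacobian and of the three scaling factors, which must cancel precisely to $m^{d/\alpha}$ so that the recursion reproduces itself at level $n$.
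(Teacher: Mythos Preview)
Your proof is correct and follows essentially the same approach as the paper: induction on $n$ using the recursive definition of $(p^m)_n$, the scaling identities from Lemma~\ref{lem:scal-m}, the change of variables $u=ms$, $w=m^{1/\alpha}z$, and then summation over $n$. The bookkeeping of the prefactors you describe matches the paper's computation exactly.
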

\begin{proof}
By Lemma~\ref{lem:scal-m} the first equality holds for $n=0$.
Then by induction
\begin{align*}
(p^m)_{n+1}(t,x,y)
&=\int_0^t\int_{\Rd}
(p^m)_n(s,x,z) V^m_\delta(z) p^m(t-s,z,y)\,\d{z}\d{s}\\
&=m^{\frac{2d}{\alpha}+1}\int_0^t\int_{\Rd}
p_n(ms,m^{\frac{1}{\alpha}}x,m^{\frac{1}{\alpha}}z) V_\delta(m^{\frac{1}{\alpha}}z) p(mt-ms,m^{\frac{1}{\alpha}}z,m^{\frac{1}{\alpha}}y)\,\d{z}\d{s}\\
&= m^{\frac{d}{\alpha}} \int_0^{mt}\int_{\Rd} p_n(u,m^{\frac{1}{\alpha}}x,w)V_\delta (w) p(mt-u,w,m^{\frac{1}{\alpha}}y)\,\d{z}\d{s}\\
&= m^{\frac{d}{\alpha}} p_{n+1}(mt,m^{\frac{1}{\alpha}}x,m^{\frac{1}{\alpha}}y)\,.
\end{align*}
Summing over $n=0,1\ldots$ gives the second equality.
\end{proof}

From \cite[Theorem~1]{MR3460023} we immediately obtain the following inequality.
\begin{corollary}\label{cor:BDK}
For $\delta\in (0,d-\alpha)$ and all $t>0$, $x\in\Rdz$,
$$
\int_{\R^d}\tilde{p}(t,x,y) h_\delta(y)\, \d{y}\leq 
h_\delta(x)\,.
$$
\end{corollary}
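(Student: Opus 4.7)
The plan is to apply \cite[Theorem~1]{MR3460023} directly: that theorem is exactly the abstract machinery that says, given a transition density $p$ and a weight $\psi$ on $(0,\infty)$ such that $h(x) := \int_0^\infty \psi(t)\, p(t,x,0)\,\d t$ is finite (and $\psi$ has appropriate monotonicity), the function $h$ is a super-median for the Schr\"odinger semigroup perturbed by $V := \bar h/h$, meaning precisely that $\int \tilde p(t,x,y) h(y)\,\d y \leq h(x)$. So what is really needed here is only a matching of notation.

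The matching is immediate from Subsection~\ref{subsec:derivation}. First, take $\psi(t) = t^{(d-\alpha-\delta)/\alpha}$; under the assumption $\delta \in (0,d-\alpha)$ the exponent is positive, so $\psi$ is non-decreasing, and the integral $h_\delta(x) = \int_0^\infty \psi(t)\, p(t,x,0)\,\d t$ is finite for every $x \in \Rdz$ by the explicit formula of Lemma~\ref{lem:h-expl}. Second, the candidate potential $\bar h_\delta/h_\delta$ coincides with the $V_\delta$ of \eqref{def:V} by Remark~\ref{rem:VisV}. Third, $p = p^1$ is a genuine (sub-)probability transition density as recorded at the end of Subsection~\ref{subsec:rel-op}. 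With these identifications the hypotheses of \cite[Theorem~1]{MR3460023} are met verbatim, and the desired inequality is its conclusion.

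The only place where one has to be a little careful is the finiteness of $\tilde p$ itself so that the left-hand side of the inequality makes sense; this is guaranteed because the series defining $\tilde p = \tilde p_{V_\delta}$ is constructed by the perturbation framework of Subsection~\ref{sec:Sch-P}, and the super-median estimate is exactly what controls that series termwise. There is no genuine obstacle in this step beyond bookkeeping; the substantive work is deferred to Theorem~\ref{thm:tph=h}, where the reverse inequality (invariance rather than only super-median) is established.
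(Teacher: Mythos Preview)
Your proposal is correct and follows exactly the paper's approach: both invoke \cite[Theorem~1]{MR3460023} directly, with the identifications $\psi(t)=t^{(d-\alpha-\delta)/\alpha}$, $h=h_\delta$, $V=V_\delta$ already set up in Subsection~\ref{subsec:derivation}. Your write-up simply spells out the hypothesis-checking that the paper leaves implicit.
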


\begin{proof}[Proof of Corollary~\ref{thm:3}]
Due to a direct application of
\cite[Theorem~2, formula~(16)]{MR3460023}
it suffices to verify that
\begin{align}
\lim_{t\to 0^+}\int_{\Rd} \int_{\Rd}
\frac{p(t,x,y)}{2t}
&\left[
\frac{f(x)}{h_\delta(x)}-\frac{f(y)}{h_\delta(y)}\right]^2 h_\delta(y)h_\delta(x)\,\d{y}\d{x} \nonumber \\
 =
\frac12 \int_{\Rd} \int_{\Rd} &\left[
\frac{f(x)}{h_\delta(x)}-\frac{f(y)}{h_\delta(y)}\right]^2 h_\delta(y)h_\delta(x) \nu(x-y)\,\d{y}\d{x}\,.\label{eq:H-aux}
\end{align}
Since by \eqref{approx:p} and Lemma~\ref{lem:toLm} we have
$p(t,x,y)/t \leq c \nu(x-y)$ and $p(t,x,y)/t \to \nu(x-y)$
as $t\to 0^+$,
the equality \eqref{eq:H-aux}
holds by the dominated convergence theorem if the right hand side of \eqref{eq:H-aux} is finite, and by Fatou's lemma in the opposite case.
\end{proof}

\subsection{Analysis of the potential}\label{subsec:p-analysis}

We assume that $\alpha\in (0,2\land d)$. 
In view of Lemmas~\ref{lem:scal-m} and~\ref{lem:h-expl}, and Remark~\ref{rem:VisV} we have for $\beta\in(0,d-\alpha)$, $m>0$ and $x\in\Rdz$,
\begin{align}\label{eq:V}
V_{\beta}^m(x)=\kappa_\beta |x|^{-\alpha}\, \frac{\Gamma\left(\frac{\beta}{2}\right)}{\Gamma\left(\frac{\beta+\alpha}{2}\right)}  \frac{K_{\frac{\beta+\alpha}{2}}\left(m^{1/\alpha}|x|\right)}{K_{\frac{\beta}{2}}\left(m^{1/\alpha}|x|\right) } \left(\frac{m^{1/\alpha}|x|}{2}\right)^{\alpha/2},
\end{align}
where
$$
\kappa_\beta = \frac{2^{\alpha} \Gamma\left(\frac{\beta+\alpha}{2}\right)\Gamma\left(\frac{d-\beta}{2}\right)}{\Gamma\left(\frac{\beta}{2}\right)\Gamma\left(\frac{d-\beta-\alpha}{2}\right)}\,.
$$
Note that $\kappa_\beta |x|^{-\alpha}$ is the Hardy potential for the fractional Laplacian.
We write
$$\nu_1=\beta/2\,, \qquad\qquad \nu_2=(\beta+\alpha)/2\,.$$

The following 
two properties stem from
\cite[Lemma~2.6]{MR486686} and
\cite[Theorem~2.9]{MR3637943}, respectively,
\begin{align}\label{properties}
\frac{V_\beta (x)}{\kappa_\beta |x|^{-\alpha}}
\quad \mbox{is radial increasing in } x, \mbox{ and decreasing in } \beta\in(0,d-\alpha).
\end{align}
We proceed with the analysis of $V_\beta$.
\begin{lemma}\label{lem:lim_V}
For $\beta\in(0,d-\alpha)$ we have
$$
\lim_{|x|\to 0} \frac{V_\beta (x)}{\kappa_\beta|x|^{-\alpha}}=1\,.
$$
\end{lemma}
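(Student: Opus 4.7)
The plan is to read off the limit directly from the explicit formula \eqref{eq:V} (taken with $m=1$), namely
\[
V_{\beta}(x)=\kappa_\beta |x|^{-\alpha}\, \frac{\Gamma\left(\tfrac{\beta}{2}\right)}{\Gamma\left(\tfrac{\beta+\alpha}{2}\right)}  \frac{K_{\nu_2}(|x|)}{K_{\nu_1}(|x|)} \left(\frac{|x|}{2}\right)^{\alpha/2},
\]
with $\nu_1=\beta/2$ and $\nu_2=(\beta+\alpha)/2$. Since $\beta\in(0,d-\alpha)$, both indices are strictly positive, so the standard small-argument asymptotics of the modified Bessel function of the second kind (which the paper records in Subsection~\ref{sec:ap-Bessel}) apply to both numerator and denominator.

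Next, I would invoke the asymptotic $K_\nu(r)\sim \tfrac{\Gamma(\nu)}{2}(2/r)^{\nu}$ as $r\to 0^+$ for every $\nu>0$. This yields
\[
\frac{K_{\nu_2}(r)}{K_{\nu_1}(r)} \sim \frac{\Gamma(\nu_2)}{\Gamma(\nu_1)}\left(\frac{2}{r}\right)^{\nu_2-\nu_1} = \frac{\Gamma\!\left(\tfrac{\beta+\alpha}{2}\right)}{\Gamma\!\left(\tfrac{\beta}{2}\right)} \left(\frac{2}{r}\right)^{\alpha/2}, \qquad r\to 0^+.
\]
Multiplying by the Gamma prefactor in \eqref{eq:V} and by $(r/2)^{\alpha/2}$, the two Gamma ratios cancel and the power $(2/r)^{\alpha/2}$ cancels against $(r/2)^{\alpha/2}$. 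Hence
\[
\lim_{|x|\to 0} \frac{V_\beta(x)}{\kappa_\beta |x|^{-\alpha}} = \lim_{r\to 0^+}\frac{\Gamma\!\left(\tfrac{\beta}{2}\right)}{\Gamma\!\left(\tfrac{\beta+\alpha}{2}\right)}\frac{K_{\nu_2}(r)}{K_{\nu_1}(r)}\Bigl(\tfrac{r}{2}\Bigr)^{\alpha/2}=1,
\]
which is precisely the claim.

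There is no real obstacle here: the only thing to check is that the small-$r$ asymptotics are applied with positive indices, which is automatic from $\beta\in(0,d-\alpha)$, and that no additional logarithmic term appears (which would only happen if $\nu_1=0$ or $\nu_2=0$, excluded by the hypothesis). The computation is a clean cancellation, so no auxiliary lemmas are needed beyond the standard Bessel asymptotics collected in the appendix.
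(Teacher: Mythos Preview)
Your proof is correct and follows exactly the same route as the paper: both arguments apply the small-argument asymptotics \eqref{eq:lim_K} of $K_\nu$ to the explicit formula \eqref{eq:V}, and you have simply written out the cancellation that the paper leaves implicit.
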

\begin{proof}
The result follows from the asymptotic behaviour of the function $K_{\nu}(r)$ given in \eqref{eq:lim_K}.
\end{proof}

\begin{corollary}\label{cor:diff_nonneg}
For $\beta\in(0,d-\alpha)$ and $x\in\Rdz$ we have $V_\beta(x)- \kappa_\beta |x|^{-\alpha}> 0
$.
\end{corollary}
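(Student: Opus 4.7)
The plan is to combine directly the monotonicity property \eqref{properties} with Lemma~\ref{lem:lim_V}. Introduce the radial profile
$$
F(r) := \frac{V_\beta(r)}{\kappa_\beta r^{-\alpha}}, \qquad r > 0,
$$
so that $F(|x|)$ is the ratio appearing in the statement of the corollary. Property \eqref{properties} asserts that $F$ is increasing on $(0, \infty)$, while Lemma~\ref{lem:lim_V} identifies $\lim_{r \to 0^+} F(r) = 1$. Taken together, these two facts immediately yield $F(r) \geq 1$ for all $r > 0$, which is the weak form of the desired inequality.

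The only genuine step is upgrading $\geq$ to strict $>$. For this I would invoke the explicit formula \eqref{eq:V} with $m = 1$, which writes $F(r)$ as a positive constant times $(r/2)^{\alpha/2}\, K_{(\beta+\alpha)/2}(r)/K_{\beta/2}(r)$. Since the modified Bessel functions $K_\nu$ are real analytic on $(0, \infty)$, so is $F$; and from the large-$r$ asymptotics $K_\nu(r) \sim \sqrt{\pi/(2r)}\, e^{-r}$ one gets $F(r) \to \infty$ as $r \to \infty$. Hence $F$ cannot be constantly $1$ on any sub-interval of $(0, \infty)$. Combined with its monotonicity and the fact that $F(0^+) = 1$, this forces $F(r) > 1$ at every $r > 0$, i.e.\ the stated strict inequality.

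The main (and essentially only) obstacle is the strictness assertion, since the weak inequality is a two-line consequence of the preceding results. It is resolved either by the analyticity argument above, or, equivalently, by observing that the monotonicity in \eqref{properties} is in fact strict, so that the infimum value $1$ cannot be attained at any $r > 0$.
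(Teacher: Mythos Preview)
Your proof is correct and follows essentially the same approach as the paper: combine the monotonicity in \eqref{properties} with the limit in Lemma~\ref{lem:lim_V}. The paper's own proof is just the one-line remark ``The inequality follows from Lemma~\ref{lem:lim_V} and \eqref{properties},'' and in fact you go further than the paper by supplying an explicit argument (via analyticity and the large-$r$ asymptotics) for why the inequality is strict rather than merely weak.
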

\begin{proof}
The inequality follows from 
Lemma~\ref{lem:lim_V} and \eqref{properties}.
\end{proof}

Since $V_\beta$ is radial we write $V_\beta(r)=V_\beta(x)$ for $r=|x|$.

\begin{lemma}\label{lem:V-decreasing}
For $\beta\in(0,d-\alpha)$ and $r>0$ we have $V_\beta '(r)<0$.
\end{lemma}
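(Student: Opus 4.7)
The plan is to reduce the monotonicity of $V_\beta(r)$ to a Turán-type (log-convexity) property of the modified Bessel functions in the order parameter. By \eqref{eq:V} with $m=1$ we can write
$$V_\beta(r) = C_\beta \, r^{-\alpha/2}\, \frac{K_{\nu_2}(r)}{K_{\nu_1}(r)}, \qquad \nu_1 = \beta/2, \;\; \nu_2 = (\beta+\alpha)/2,$$
for a positive constant $C_\beta$. Taking a logarithmic derivative, $V_\beta'(r)<0$ is equivalent to
$$\frac{K_{\nu_2}'(r)}{K_{\nu_2}(r)} - \frac{K_{\nu_1}'(r)}{K_{\nu_1}(r)} < \frac{\alpha}{2r}.$$

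Next I would invoke the standard Bessel identities (both listed in Subsection~\ref{sec:ap-Bessel}), namely $K_\nu'(r) = -\tfrac12\bigl(K_{\nu-1}(r)+K_{\nu+1}(r)\bigr)$ together with $K_{\nu+1}(r) - K_{\nu-1}(r) = \tfrac{2\nu}{r}K_\nu(r)$, which combine to give the clean form $K_\nu'(r) = \tfrac{\nu}{r}K_\nu(r) - K_{\nu+1}(r)$. Substituting this into the previous display and using $\nu_2 - \nu_1 = \alpha/2$, the $\alpha/(2r)$ terms cancel and the claim $V_\beta'(r)<0$ becomes exactly
$$\frac{K_{\nu_1+1}(r)}{K_{\nu_1}(r)} \;<\; \frac{K_{\nu_2+1}(r)}{K_{\nu_2}(r)}.$$
Thus it suffices to prove that, for every fixed $r>0$, the ratio $\nu\mapsto K_{\nu+1}(r)/K_\nu(r)$ is strictly increasing on $(0,\infty)$.

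This monotonicity is the classical Turán-type inequality $K_\nu(r)^2 < K_{\nu-1}(r) K_{\nu+1}(r)$, i.e.\ strict log-convexity of $\nu \mapsto K_\nu(r)$. I would derive it directly from the integral representation
$$K_\nu(r) = \tfrac12 (r/2)^\nu \int_0^\infty e^{-s - r^2/(4s)}\, s^{-\nu-1}\, ds$$
recalled in Subsection~\ref{sec:ap-Bessel}. After cancelling the $(r/2)^{\pm 1}$ prefactors, the desired strict inequality reduces to
$$\Bigl(\int_0^\infty e^{-s-r^2/(4s)}\, s^{-\nu-1}\, ds\Bigr)^2 \;<\; \int_0^\infty e^{-s-r^2/(4s)}\, s^{-\nu}\, ds \cdot \int_0^\infty e^{-s-r^2/(4s)}\, s^{-\nu-2}\, ds,$$
which is the Cauchy--Schwarz inequality applied to $d\mu(s) = e^{-s-r^2/(4s)}\,ds$ with the pair $s^{-\nu/2}$ and $s^{-\nu/2-1}$; the inequality is strict because these two functions are linearly independent in $L^2(\mu)$.

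The main obstacle I anticipate is essentially bookkeeping: picking the most efficient Bessel recurrence so that the $\alpha/(2r)$ term produced by the $r^{-\alpha/2}$ factor is matched exactly by the $(\nu_2-\nu_1)/r$ terms coming from the $\nu/r$ parts of the log-derivatives, so that the whole question collapses to the single Turán inequality. An equally clean alternative would be to avoid the Bessel machinery entirely, insert the integral representation of $K_{\nu_i}(r)$ directly into $V_\beta(r)$, differentiate under the integral sign in $r$, and recognize the resulting inequality as a convexity-of-log-moments (Hölder) statement for the measure $e^{-s-r^2/(4s)}\,ds$; the two routes are equivalent and yield the strict conclusion.
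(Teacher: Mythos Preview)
Your proposal is correct and follows essentially the same route as the paper: logarithmic differentiation of $V_\beta$, the identity $K_\nu'=(\nu/r)K_\nu-K_{\nu+1}$, and then the strict monotonicity of $\nu\mapsto K_{\nu+1}(r)/K_\nu(r)$, which the paper simply cites as \eqref{K/K-1} from Ismail--Muldoon rather than proving. One small caveat: the Cauchy--Schwarz inequality you display is only the Tur\'an case (midpoint log-convexity at radius~$1$), which by itself does not yield monotonicity for arbitrary $\nu_1<\nu_2=\nu_1+\alpha/2$; however, your final remark about H\"older (log-convexity of moments of $e^{-s-r^2/(4s)}\,\d s$) is exactly the needed strengthening, so the argument goes through.
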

\begin{proof}
Note that $V_\beta(r)= c r^{-\alpha/2} \frac{K_{\nu_2}(r)}{K_{\nu_1}(r)}$. Therefore,
\begin{align}\label{eq:V_prime}
V_\beta'(r)=-(\alpha/2) r^{-1} V_\beta(r)+ 
 V_\beta(r) \left( 
\frac{K_{\nu_2}'(r)}{K_{\nu_2}(r)}
-
\frac{K_{\nu_1}'(r)}{K_{\nu_1}(r)}
\right).
\end{align}
Using \eqref{def_K-1} we get
$$
V_\beta'(r) = V_\beta(r) \left(\frac{K_{\nu_1+1}(r)}{K_{\nu_1}(r)} -\frac{K_{\nu_2+1}(r)}{K_{\nu_2}(r)}\right).
$$
The result follows from \eqref{K/K-1}.
\end{proof}

\begin{lemma}\label{lem:diff}
For $\beta\in(0,d-\alpha)$ and $x\in\Rdz$ we have
$$
V_\beta(x)- \kappa_\beta |x|^{-\alpha} =\kappa_\beta
|x|^{-\alpha} \int_0^{|x|} \frac{V_\beta(s)}{\kappa_\beta s^{-\alpha}} 
\left(
\frac{K_{\nu_1-1}(s)}{K_{\nu_1}(s)}
-\frac{K_{\nu_2-1}(s)}{K_{\nu_2}(s)} \right) \d{s} \,.
$$
\end{lemma}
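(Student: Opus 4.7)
The plan is to recognize the stated identity as an integrated form of a first-order ODE satisfied by the function $r\mapsto r^\alpha V_\beta(r)$, with initial value at $r=0^+$ provided by Lemma~\ref{lem:lim_V}.

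First, I would take the representation $V_\beta(r)=c\,r^{-\alpha/2} K_{\nu_2}(r)/K_{\nu_1}(r)$ used in the proof of Lemma~\ref{lem:V-decreasing}, so that $r^\alpha V_\beta(r)=c\,r^{\alpha/2} K_{\nu_2}(r)/K_{\nu_1}(r)$. Differentiating and logarithmically expanding,
\begin{equation*}
(r^\alpha V_\beta(r))' = r^\alpha V_\beta(r)\left(\frac{\alpha}{2r}+\frac{K'_{\nu_2}(r)}{K_{\nu_2}(r)}-\frac{K'_{\nu_1}(r)}{K_{\nu_1}(r)}\right).
\end{equation*}
Now I would apply the Bessel recurrence $K_\nu'(r)=-K_{\nu-1}(r)-(\nu/r)K_\nu(r)$ (the complementary half of \eqref{def_K-1} to the one used in the proof of Lemma~\ref{lem:V-decreasing}). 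This substitutes $K'_{\nu_j}/K_{\nu_j}=-K_{\nu_j-1}/K_{\nu_j}-\nu_j/r$, and since $\nu_2-\nu_1=\alpha/2$, the explicit $1/r$ terms cancel, yielding the ODE
\begin{equation*}
(r^\alpha V_\beta(r))' = r^\alpha V_\beta(r)\left(\frac{K_{\nu_1-1}(r)}{K_{\nu_1}(r)}-\frac{K_{\nu_2-1}(r)}{K_{\nu_2}(r)}\right), \qquad r>0.
\end{equation*}

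Second, I would integrate from $\epsilon$ to $|x|$ via the fundamental theorem of calculus applied to the $C^1$ function $r^\alpha V_\beta(r)$ on $(0,\infty)$, and then let $\epsilon\to 0^+$. By Lemma~\ref{lem:lim_V} the boundary term satisfies $\epsilon^\alpha V_\beta(\epsilon)\to\kappa_\beta$, producing
\begin{equation*}
|x|^\alpha V_\beta(x)-\kappa_\beta = \int_0^{|x|} s^\alpha V_\beta(s)\left(\frac{K_{\nu_1-1}(s)}{K_{\nu_1}(s)}-\frac{K_{\nu_2-1}(s)}{K_{\nu_2}(s)}\right)\d s.
\end{equation*}
Finally, I would multiply by $|x|^{-\alpha}$, insert a factor $\kappa_\beta/\kappa_\beta$ inside the integral, and rewrite $s^\alpha V_\beta(s)/\kappa_\beta = V_\beta(s)/(\kappa_\beta s^{-\alpha})$ to recover the statement.

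The only nontrivial point is the passage $\epsilon\to 0^+$ in the integral. Since $r^\alpha V_\beta(r)$ extends continuously to $r=0$ with value $\kappa_\beta$ (Lemma~\ref{lem:lim_V}), the left-hand side converges and the improper integral on the right converges as well. This is the minor obstacle; once it is dispatched the algebraic identification is purely mechanical. It is also consistent that the integrand is nonnegative: since $\nu_1<\nu_2$ and $r\mapsto K_{\nu+1}(r)/K_\nu(r)$ is increasing in $\nu$ (equivalently $K_{\nu-1}/K_\nu$ is decreasing in $\nu$ by \eqref{K/K-1}), the Bessel-quotient difference is positive, which matches Corollary~\ref{cor:diff_nonneg}.
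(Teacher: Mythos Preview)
Your proof is correct and follows essentially the same route as the paper: both compute $(r^\alpha V_\beta(r))'$ via the Bessel recurrence \eqref{def_K-2}, obtain the same ODE, and integrate using Lemma~\ref{lem:lim_V} for the boundary value at $0^+$. The only cosmetic difference is that the paper first isolates $V_\beta'$ (reusing \eqref{eq:V_prime}) and then applies the product rule, whereas you differentiate $r^\alpha V_\beta(r)$ logarithmically in one step; also note that the recurrence you invoke is \eqref{def_K-2} rather than \eqref{def_K-1}, and the monotonicity of $K_{\nu-1}/K_\nu$ is \eqref{K/K-2}.
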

\begin{proof}
By Lemma~\ref{lem:lim_V},
$$
V_\beta(x)- \kappa_\beta |x|^{-\alpha}
=|x|^{-\alpha} \int_0^{|x|} \left(s^{\alpha} V_\beta(s) \right)' \d{s}\,,
$$
where 
$(s^{\alpha} V_\beta(s))'=\alpha s^{\alpha -1} V_\beta(s) + s^{\alpha}V_\beta'(s)$. 
We use \eqref{eq:V_prime}
and  \eqref{def_K-2} to get
$$
V_\beta'(s)=-\alpha s^{-1} V_\beta(s)+ 
 V_\beta(s)
\left(
\frac{K_{\nu_1-1}(s)}{K_{\nu_1}(s)}
-\frac{K_{\nu_2-1}(s)}{K_{\nu_2}(s)} \right).
$$
This ends the proof.
\end{proof}

\begin{lemma}\label{lem:diff_est}
For $\beta\in(0,d-\alpha)$ we have on $\{x\in\Rd \colon 0<|x|\leq 1/2\}$,

\begin{align*}
V_\beta(x)- \kappa_\beta |x|^{-\alpha} &\approx
\begin{cases}
 |x|^{2-\alpha}\,, &\beta> 2\,,\\
 |x|^{2-\alpha}\log(1/|x|)\,, \qquad   &\beta =2\,,\\
 |x|^{\beta-\alpha}\,,  &\beta \in (0,2)\,.
\end{cases}
\end{align*}
\end{lemma}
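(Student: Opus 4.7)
My plan is to start from the exact integral identity in Lemma~\ref{lem:diff} and reduce the problem to estimating a one-dimensional integral of a difference of Bessel ratios. First, I would observe that the prefactor $V_\beta(s)/(\kappa_\beta s^{-\alpha})$ appearing in that integrand is comparable to $1$ uniformly on $(0,1/2]$: by Lemma~\ref{lem:lim_V} it tends to $1$ as $s\to 0^+$, and by \eqref{properties} it is radially increasing, so it is sandwiched between $1$ and its (finite) value at $s=1/2$. Hence, writing $r=|x|$, $\nu_1=\beta/2$, $\nu_2=(\beta+\alpha)/2$, the problem reduces to proving
\begin{equation*}
r^{-\alpha}\int_0^r\left(\frac{K_{\nu_1-1}(s)}{K_{\nu_1}(s)}-\frac{K_{\nu_2-1}(s)}{K_{\nu_2}(s)}\right)\d{s}
\end{equation*}
has the stated three-regime behaviour on $(0,1/2]$.

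Next, I would extract the small-$s$ asymptotics of $K_{\nu-1}(s)/K_\nu(s)$ from the standard behaviour of $K_\nu$ near $0$ (Subsection~\ref{sec:ap-Bessel}). Three cases appear: for $\nu>1$, $K_{\nu-1}(s)/K_\nu(s)\sim s/[2(\nu-1)]$; for $\nu=1$, using $K_0(s)\sim -\log s$ and $K_1(s)\sim 1/s$, $K_0(s)/K_1(s)\sim -s\log s$; for $\nu\in(0,1)$, using $K_{\nu-1}=K_{1-\nu}$ and the leading order of both, $K_{\nu-1}(s)/K_\nu(s)\sim c(\nu)\, s^{2\nu-1}$ with an explicit positive constant $c(\nu)$. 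Each expression is positive for small $s>0$.

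Now the three regimes read off immediately. If $\beta>2$, both $\nu_1,\nu_2>1$ so the integrand equals $\tfrac{1}{2}[1/(\nu_1-1)-1/(\nu_2-1)]\,s +o(s)$ with positive leading coefficient, integration yields a constant times $r^2$, and multiplication by $r^{-\alpha}$ gives $r^{2-\alpha}$. If $\beta=2$, $\nu_1=1<\nu_2$ and the $-s\log s$ term dominates the $O(s)$ contribution from $\nu_2$; the antiderivative $-\tfrac{s^2}{2}\log s+\tfrac{s^2}{4}$ gives $\tfrac{r^2}{2}\log(1/r)+O(r^2)$, and on $r\le 1/2$ the logarithmic term controls, yielding $r^{2-\alpha}\log(1/r)$. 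If $\beta<2$, $\nu_1<1$ produces a leading term $c_1 s^{\beta-1}$ whose exponent $\beta-1$ is strictly smaller than the exponent of the $\nu_2$-term in every sub-case (namely $1$, $1^-$, or $\beta+\alpha-1$, since $\alpha>0$), so the first ratio dominates; integration yields $r^\beta/\beta$ up to constants, and the final bound is $r^{\beta-\alpha}$.

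The main technical obstacle is promoting these pointwise $s\to 0^+$ asymptotics to \emph{two-sided} comparability on the full interval $(0,1/2]$. This is handled by noting that in each regime the ratio of the integrand to its claimed leading profile is continuous, strictly positive, and has the prescribed positive limit at $0$, hence is bounded between positive constants by compactness on $[\varepsilon,1/2]$. Extra care is needed in the borderline cases $\beta=2$ (and $\beta+\alpha=2$ inside $\beta<2$) to verify that the logarithmic/polynomial corrections do not spoil the dominant order after integration; this follows from $r\le 1/2$, which in particular gives $\log(1/r)\ge\log 2>0$ and allows one to absorb the polynomial remainders into the main term.
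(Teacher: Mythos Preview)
Your proof is correct and follows essentially the same route as the paper: invoke Lemma~\ref{lem:diff}, use Lemma~\ref{lem:lim_V} and \eqref{properties} to control the prefactor $V_\beta(s)/(\kappa_\beta s^{-\alpha})$, extract the three-case small-$s$ asymptotics of $K_{\nu-1}/K_\nu$, observe that the $\nu_1$-term dominates the $\nu_2$-term, and integrate. Your write-up is in fact more detailed than the paper's, which simply records the three limits and then asserts the comparability; one small addition worth making explicit is that the strict positivity of the integrand on all of $(0,1/2]$ (needed for the compactness step) follows from \eqref{K/K-2}.
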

\begin{proof} By the asymptotics given in
Subsection~\ref{sec:ap-Bessel}
we have
\begin{align*}
\lim_{r\to 0^+} \frac{K_{\nu-1}(r)}{K_\nu(r)} (r/2)^{-1}=\frac1{\nu-1}\,,\qquad &\nu>1\,,\\
\lim_{r\to 0^+} \frac{K_{\nu-1}(r)}{K_\nu(r)}\, (r \log(1/r))^{-1}=1\,,\qquad &\nu=1\,,\\
\lim_{r\to 0^+} \frac{K_{\nu-1}(r)}{K_\nu(r)}\, (r^{-1+2\nu})^{-1} = 2^{1-2\nu} \frac{\Gamma(1-\nu)}{\Gamma(\nu)}\,, \qquad &\nu \in (0,1)\,.
\end{align*}
Therefore, from the two expressions
$K_{\nu_1-1}(r)/K_{\nu_1}(r)$
and $K_{\nu_2-1}(r)/K_{\nu_2}(r)$
the one with $\nu_1$ dominates the other when $r\to 0^+$.
Then by
Lemmas~\ref{lem:lim_V} and~\ref{lem:diff},
and using the above limits with $\nu=\beta/2$ we obtain the comparability.
\end{proof}

As shown in \cite[Proof of Proposition~5]{MR3460023} the function $\beta \mapsto \kappa_\beta$ is increasing on $(0,(d-\alpha)/2]$ and decreasing on $[(d-\alpha)/2,d-\alpha)$. The mapping $\beta \mapsto V_\beta(x)$ does not have that property.
Combining \cite{MR3460023} and \eqref{properties} we get the following observation, which we also depict on Figure~\ref{fig}.
\begin{remark}\label{rem:reflect_beta}
For $\beta \in (\tfrac{d-\alpha}{2},d-\alpha)$
and $\beta'=d-\alpha-\beta \in(0,\frac{d-\alpha}{2})$
we have
$$
\kappa_\beta=\kappa_{\beta'} \qquad \mbox{and}
\qquad
V_{\beta} < V_{\beta'}\,.
$$
\end{remark}

\begin{figure}[h]
    \centering
    \subfloat[\centering $|x|=0.1$]{{\includegraphics[width=0.45\linewidth]{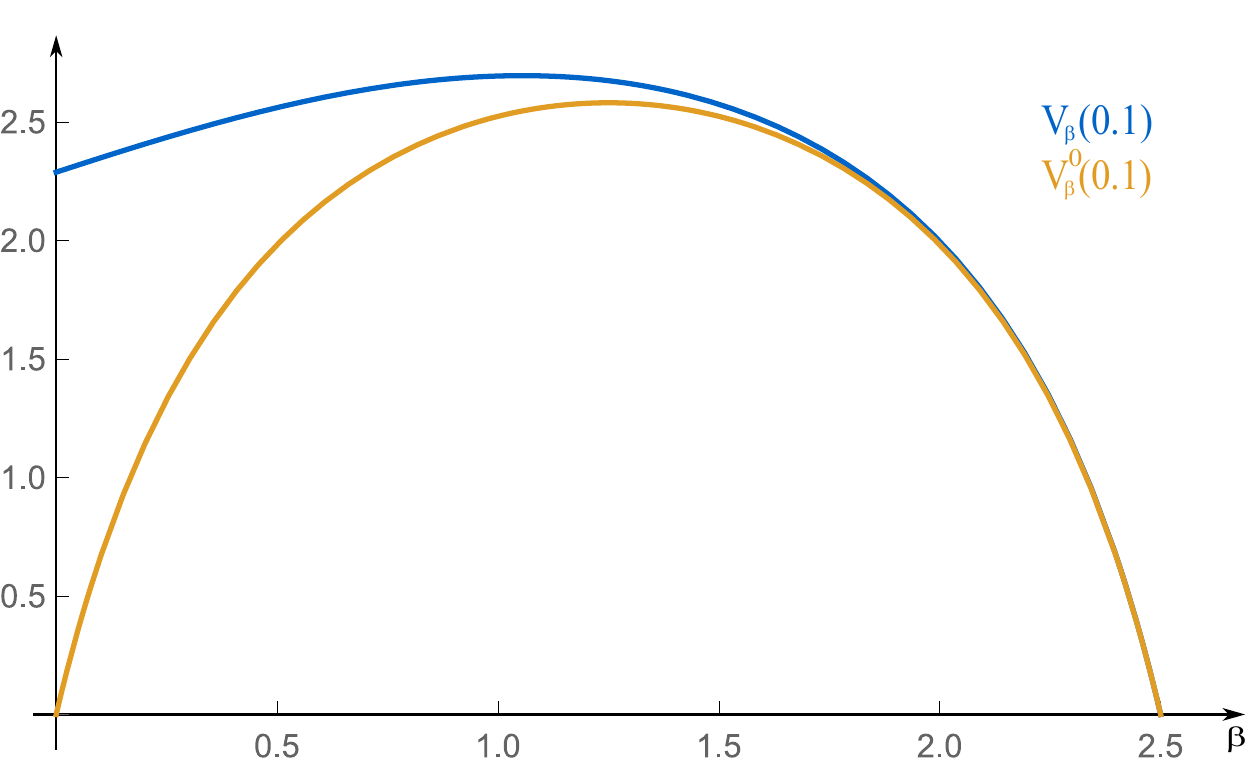} }}
    \qquad
    \subfloat[\centering $|x|=1$]{{\includegraphics[width=0.45\linewidth]{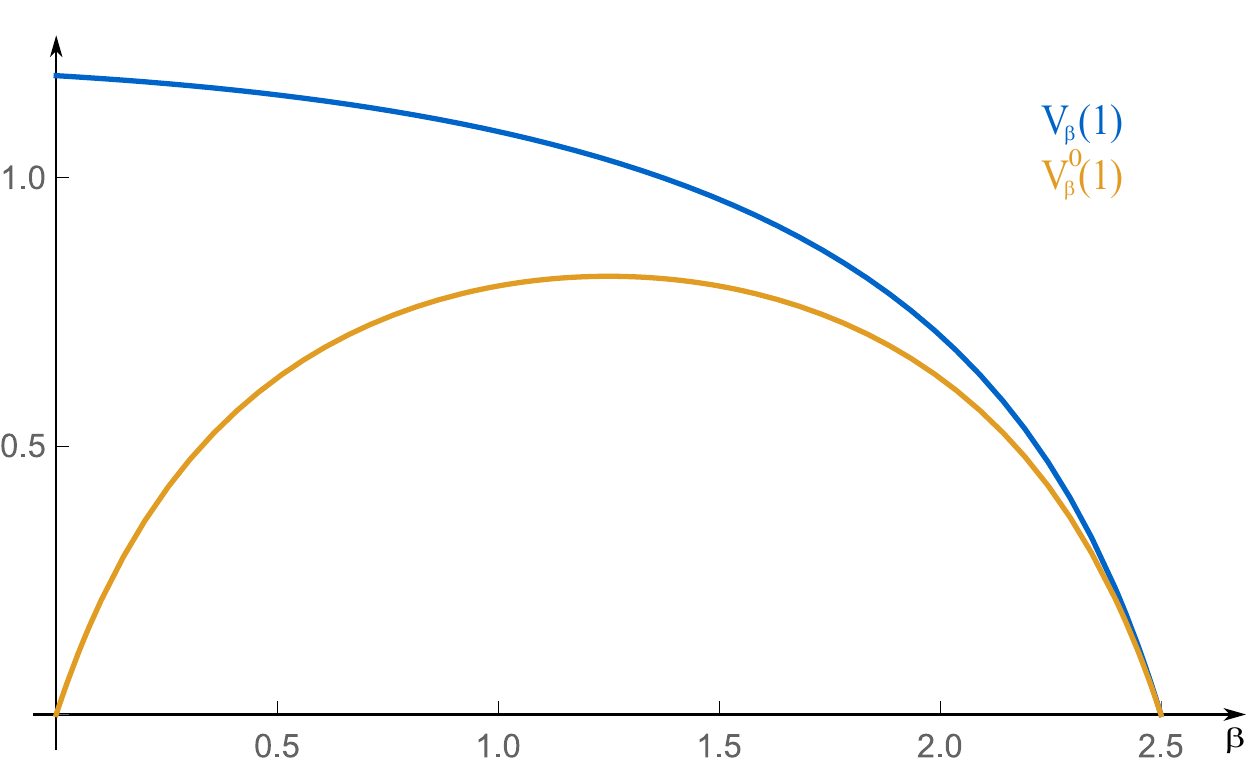} }}%
    \caption{Comparison of $V_\beta(x)$ and $V_\beta^0(x)=\kappa_\beta |x|^{-\alpha}$ for $d=3$, $\alpha=1/2$.}
    \label{fig}
\end{figure}

We will later on need the following technical result that provides the upper bound of the difference
$V_\beta(x)- \kappa_\beta |x|^{-\alpha}$, which is uniform in the parameter $\beta$. 
\begin{lemma}\label{lem:diff_unif_bound}
Let $0<\beta_0< \alpha\land (d-\alpha)$.
There exists $c>0$ such that
for all $\beta\in [\beta_0,d-\alpha)$ and $0<|x| \leq 1$,
$$
V_\beta(x)- \kappa_\beta |x|^{-\alpha} 
\leq c |x|^{\beta_0-\alpha}\,.
$$
\end{lemma}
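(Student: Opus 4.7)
The plan is to sidestep any fresh Bessel function manipulation and instead reduce the uniform-in-$\beta$ estimate to the pointwise estimate already proved in Lemma~\ref{lem:diff_est} at the single value $\beta=\beta_0$. The reduction is driven entirely by the second monotonicity in \eqref{properties}.

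The first step is to exploit the fact that, for each fixed $x\in\Rdz$, the map $\beta\mapsto V_\beta(x)/(\kappa_\beta|x|^{-\alpha})$ is decreasing on $(0,d-\alpha)$. Thus for all $\beta\in[\beta_0,d-\alpha)$,
\[
\frac{V_\beta(x)}{\kappa_\beta|x|^{-\alpha}} \;\leq\; \frac{V_{\beta_0}(x)}{\kappa_{\beta_0}|x|^{-\alpha}}.
\]
Multiplying through by $\kappa_\beta|x|^{-\alpha}>0$ and subtracting $\kappa_\beta|x|^{-\alpha}$ from both sides yields the key inequality
\[
V_\beta(x)-\kappa_\beta|x|^{-\alpha} \;\leq\; \frac{\kappa_\beta}{\kappa_{\beta_0}}\bigl(V_{\beta_0}(x)-\kappa_{\beta_0}|x|^{-\alpha}\bigr).
\]
So the entire problem is reduced to bounding each factor on the right uniformly in $\beta$.

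For the prefactor, I would use the explicit formula for $\kappa_\beta$ to show that $\beta\mapsto\kappa_\beta$ is a continuous, positive function on $[\beta_0,d-\alpha)$; since the denominator contains $\Gamma((d-\beta-\alpha)/2)$, which diverges, $\kappa_\beta\to 0$ as $\beta\to(d-\alpha)^-$, so the continuous extension to $[\beta_0,d-\alpha]$ is bounded. Consequently $\kappa_\beta/\kappa_{\beta_0}\leq c_1$ for some $c_1=c_1(\beta_0,d,\alpha)$. For the second factor, the assumption $\beta_0<\alpha<2$ places $\beta_0$ in the third regime of Lemma~\ref{lem:diff_est}, giving
\[
V_{\beta_0}(x)-\kappa_{\beta_0}|x|^{-\alpha} \;\leq\; c_2\,|x|^{\beta_0-\alpha} \qquad \text{on } 0<|x|\leq\tfrac12.
\]
Combining the two bounds proves the lemma on $\{0<|x|\leq 1/2\}$.

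Finally, extending from $|x|\leq 1/2$ to $|x|\leq 1$ is automatic: on the compact annulus $\{1/2\leq|x|\leq 1\}$ the function $V_{\beta_0}(x)-\kappa_{\beta_0}|x|^{-\alpha}$ is bounded by continuity, and $|x|^{\beta_0-\alpha}\geq 1$ there because $\beta_0<\alpha$, so the inequality persists after enlarging the constant. The whole proof is a two-line manipulation once \eqref{properties} is invoked; the only point worth verifying with a bit of care is the uniform bound on $\kappa_\beta/\kappa_{\beta_0}$ near the endpoint $\beta=d-\alpha$, and this follows directly from inspecting the Gamma functions in the explicit expression for $\kappa_\beta$.
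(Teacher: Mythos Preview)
Your argument is correct and, in fact, cleaner than the paper's. Both proofs rest on the $\beta$-monotonicity in \eqref{properties}, but they deploy it differently. The paper goes back to the integral representation of Lemma~\ref{lem:diff} and bounds each factor of the integrand separately: $\kappa_\beta\leq\kappa_{(d-\alpha)/2}$, then $V_\beta(s)/(\kappa_\beta s^{-\alpha})\leq V_{\beta_0}(1)/\kappa_{\beta_0}$ via both monotonicities in \eqref{properties}, and finally the Bessel ratio difference via \eqref{K/K-2}; the remaining integral $\int_0^{|x|}K_{\beta_0/2-1}(s)/K_{\beta_0/2}(s)\,\d s$ is then handled by the asymptotics from the proof of Lemma~\ref{lem:diff_est}. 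Your route applies the $\beta$-monotonicity once at the level of the ratio $V_\beta(x)/(\kappa_\beta|x|^{-\alpha})$ to reduce immediately to $\beta=\beta_0$, and then simply quotes Lemma~\ref{lem:diff_est} as a black box. This avoids reopening the integral formula and the Bessel inequality \eqref{K/K-2} altogether. One minor simplification: rather than arguing that $\kappa_\beta\to 0$ at the endpoint, you can cite directly that $\kappa_\beta\leq\kappa_{(d-\alpha)/2}$ for all $\beta\in(0,d-\alpha)$, as noted in the paper just before Remark~\ref{rem:reflect_beta}.
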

\begin{proof}
By Lemma~\ref{lem:diff}, \eqref{properties} and \eqref{K/K-2},
$$
V_\beta(x)- \kappa_\beta |x|^{-\alpha} 
\leq \kappa_{\frac{d-\alpha}{2}}
|x|^{-\alpha}  \frac{V_{\beta_0} (1)}{\kappa_{\beta_0}} 
\int_0^{|x|}
\frac{K_{\frac{\beta_0}{2}-1}(s)}{K_{\frac{\beta_0}{2}}(s)}\, \d{s} \,.
$$
Then we use the third limit from the proof of Lemma~\ref{lem:diff_est}.
\end{proof}

\section{Heat kernel of $-(-\Delta)^{\alpha/2}+V_\delta$}\label{sec:largest}

In this section we assume that $\alpha\in (0,2\land d)$.
Before treating the heat kernel for
the operator \eqref{def:operator},
we first analyse 
the one corresponding to
$-(-\Delta)^{\alpha/2}+V_\delta$.
Namely, we consider the heat kernel
$p^{0}$ of $-(-\Delta)^{\alpha/2}$, see \eqref{def:p^m} for the definition, and we concentrate on
$\tilde{p}^{\,0}_{V_\delta}$
that is the Schr{\"o}dinger
perturbation of $p^{0}$ by $V_\delta$.
As an auxiliary function  we use
$\tilde{p}^{\,0}_{V^0_\delta}$ that is
the  Schr{\"o}dinger perturbation of $p^{0}$ by $V^0_\delta(z)=\kappa_\delta |z|^{-\alpha}$, which was thoroughly investigated in~\cite{MR3933622}.
By Corollary~\ref{cor:diff_nonneg} we have for $\delta\in(0,d-\alpha)$ that
$$
0 \leq  V^0_\delta \leq V_\delta  \qquad \mbox{and}\qquad \tilde{p}^{\,0}_{V_\delta^0} \leq \tilde{p}^{\,0}_{V_{\delta}}  \,.
$$
In Proposition~\ref{prop:upper_bound} below
we show that the converse of the latter inequality
holds up to multiplicative constant.
We first prove two auxiliary results.

\begin{lemma}\label{lem:int-aux}
Let $\beta \in (0,d)$ and $\gamma \in [\beta-\alpha,\infty)$. There is a constant $c>0$ such that  for all $t>0$, $x\in\Rd$,
\begin{align*}
\int_0^t \int_{\Rd} s^{\gamma/\alpha} p^{0}(s,x,z) |z|^{-\beta} \,\d{z}\d{s} \le 
c \begin{cases}
 t^{(\gamma+\alpha)/\alpha}\left(t^{-\beta/\alpha} \land |x|^{-\beta}\right)
 \quad &{\rm for}\,\, \gamma> \beta-\alpha \,, \\ 
\\
\log (1+t^{\beta/\alpha}|x|^{-\beta}) &{\rm for}\,\, \gamma=\beta-\alpha\,.
\end{cases}
\end{align*}
\end{lemma}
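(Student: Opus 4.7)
The plan is to separate the double integral into an inner spatial integral—which I would evaluate via the scaling of $p^{0}$—and an outer temporal integral handled by a direct case analysis.

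For the inner step, I would use the scaling identity $p^{0}(s,x,z) = s^{-d/\alpha}\, p^{0}(1, s^{-1/\alpha}x, s^{-1/\alpha}z)$ and the change of variables $w = s^{-1/\alpha}z$ to write
$$
\int_{\Rd} p^{0}(s,x,z)|z|^{-\beta}\,\d z \;=\; s^{-\beta/\alpha}\, F(s^{-1/\alpha}x),\qquad F(y) := \int_{\Rd} p^{0}(1,y,w)|w|^{-\beta}\,\d w.
$$
Using the standard bound $p^{0}(1,y,w) \le c\bigl(1 \land |y-w|^{-d-\alpha}\bigr)$ and splitting the domain of $F$ into $\{|w|\le |y|/2\}$, $\{|w-y|\le |y|/2\}$, and $\{|w|\ge 2|y|\}$, together with the obvious bound for $|y|\le 1$ (exploiting $\beta < d$ for integrability near the origin), I would obtain $F(y) \le c(1 \land |y|^{-\beta})$. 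Unwinding the scaling then yields the clean intermediate estimate
$$
\int_{\Rd} p^{0}(s,x,z)|z|^{-\beta}\,\d z \;\le\; c\bigl(s^{-\beta/\alpha} \land |x|^{-\beta}\bigr).
$$

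For the outer step I would split according to the position of $|x|^{\alpha}$ relative to $t$, noting that $s^{-\beta/\alpha} = |x|^{-\beta}$ exactly at $s = |x|^{\alpha}$. If $t \le |x|^{\alpha}$, the minimum equals $|x|^{-\beta}$ throughout, and the integral becomes $|x|^{-\beta}\int_{0}^{t} s^{\gamma/\alpha}\,\d s \le c\, t^{(\gamma+\alpha)/\alpha}|x|^{-\beta}$ (using $\gamma/\alpha > -1$, automatic from $\gamma \geq \beta-\alpha > -\alpha$); this matches both prescribed right-hand sides, the logarithmic one via $u \le 2\log(1+u)$ on $[0,1]$. If $t > |x|^{\alpha}$, I would split the $s$-integral at $|x|^{\alpha}$: on $[0,|x|^{\alpha}]$ it equals $c|x|^{\gamma+\alpha-\beta}$, while on $[|x|^{\alpha},t]$ the integrand is $s^{(\gamma-\beta)/\alpha}$. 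For $\gamma > \beta-\alpha$ the exponent exceeds $-1$, so the second piece is bounded by $c\, t^{(\gamma-\beta+\alpha)/\alpha} = c\, t^{(\gamma+\alpha)/\alpha} t^{-\beta/\alpha}$, which dominates the first piece (as $t > |x|^{\alpha}$). In the critical case $\gamma = \beta-\alpha$ the integrand on $[|x|^{\alpha},t]$ becomes $s^{-1}$, producing $\log(t|x|^{-\alpha})$, comparable in this regime to $\log(1 + t^{\beta/\alpha}|x|^{-\beta})$, while the first piece is uniformly bounded.

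The only genuinely non-routine ingredient is the pointwise estimate $F(y) \le c(1 \land |y|^{-\beta})$; once this is in place the remaining analysis is elementary calculus on two disjoint regions of $s$. I expect no obstacles beyond careful bookkeeping across the cases $t \lessgtr |x|^{\alpha}$ and $\gamma = \beta - \alpha$ versus $\gamma > \beta - \alpha$, since the boundary behaviour of the integral $\int_{|x|^{\alpha}}^{t} s^{(\gamma-\beta)/\alpha}\,\d s$ as the exponent approaches $-1$ is precisely what forces the logarithm to appear in the endpoint case.
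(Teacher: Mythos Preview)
Your proposal is correct and follows essentially the same route as the paper: both arguments reduce first to the spatial estimate $\int_{\Rd} p^{0}(s,x,z)|z|^{-\beta}\,\d z \le c(s^{-\beta/\alpha}\wedge |x|^{-\beta})$ and then perform the identical case analysis in $s$ according to whether $|x|^{\alpha}\ge t$ or $|x|^{\alpha}<t$. The only difference is that the paper quotes this spatial bound from \cite[Lemma~2.3]{MR3933622}, whereas you derive it yourself via scaling; note, incidentally, that your three-region decomposition $\{|w|\le |y|/2\}\cup\{|w-y|\le |y|/2\}\cup\{|w|\ge 2|y|\}$ does not cover $\Rd$, but the simpler two-region split $\{|w|\le |y|/2\}\cup\{|w|> |y|/2\}$ already suffices.
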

\begin{proof}
By \cite[Lemma~2.3]{MR3933622},
\begin{align*}
\int_{\Rd} p^{0}(s,x,z) |z|^{-\beta} \d{z} \le c(s^{-\beta/\alpha} \land |x|^{-\beta})\,.
\end{align*}
Hence, for $|x|^\alpha \ge t$,
\begin{align*}
\int_0^t\int_{\Rd} s^{\gamma/\alpha} p^{0}(s,x,z) |z|^{-\beta} \d{z}\d{s} &\le c|x|^{-\beta} \int_0^t s^{\gamma/\alpha}\d{s} =  \frac{c\alpha}{\gamma+\alpha}t^{(\gamma+\alpha)/\alpha}|x|^{-\beta}\,.
\end{align*}
Let $|x|^\alpha < t$. For $\gamma>\beta-\alpha$,
\begin{align*}
\int_0^t\int_{\Rd} s^{\gamma/\alpha} p^{0}(s,x,z) |z|^{-\beta} \d{z}\d{s} &\le c|x|^{-\beta} \int_0^{|x|^\alpha} s^{\gamma/\alpha}\d{s} + c\int_{|x|^\alpha}^t s^{(\gamma-\beta)/\alpha} \d{s}  \le c t^{(\gamma+\alpha - \beta)/\alpha}\,.
\end{align*}
The same steps lead to the second estimate for $\gamma=\beta-\alpha$.
\end{proof}

\begin{lemma}\label{lem:Kato}
Let $\delta \in (0,\frac{d-\alpha}{2}]\cap (0,\alpha)$. There is a constant $c>0$ such that for all $t>0$, $x,y\in\Rdz$,
\begin{align*}
\int_0^t \int_{\Rd} \tilde{p}^{\,0}_{V^0_\delta}(s,x,z) |z|^{\delta-\alpha}\, \tilde{p}^{\,0}_{V^0_\delta}(t-s,z,y)\, \d{z} \d{s} \le ct^{\delta/\alpha} \tilde{p}^{\,0}_{V_\delta^0}(t,x,y)\,. 
\end{align*}
\end{lemma}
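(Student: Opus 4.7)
The plan is to combine the sharp two-sided estimates for $\tilde{p}^{\,0}_{V^0_\delta}$ from \cite[Theorem 1.1]{MR3933622}, namely $\tilde{p}^{\,0}_{V^0_\delta}(u,x,y) \approx p^0(u,x,y) H^0(u,x) H^0(u,y)$, with the Duhamel identity for the perturbed kernel and the weighted integral bound of Lemma~\ref{lem:int-aux}. Substituting the two-sided bounds on both sides of the claimed inequality and splitting the $s$-integral at $t/2$, the change of variables $s \mapsto t-s$ (together with the symmetry $\tilde{p}^{\,0}_{V^0_\delta}(u,a,b)=\tilde{p}^{\,0}_{V^0_\delta}(u,b,a)$) shows the two halves are mirror images under $x \leftrightarrow y$, and since both sides of the target are symmetric in $x,y$, it suffices to bound the integral on $s \in (0,t/2)$. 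On this range $H^0(s,x) \le H^0(t,x)$, $H^0(t-s,y) \approx H^0(t,y)$, and $p^0(t-s,z,y) \approx p^0(t,z,y)$, so the factors $H^0(t,x)H^0(t,y)$ pull outside and the task reduces to
\[
J(t,x,y):=\int_0^{t/2}\!\!\int_{\Rd} p^0(s,x,z)\, H^0(s,z) H^0(t-s,z) |z|^{\delta-\alpha}\, p^0(t,z,y) \,\d z\,\d s \leq c\, t^{\delta/\alpha} p^0(t,x,y).
\]

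The next step is to expand the product, using $(t-s)^{\delta/\alpha} \le c\, t^{\delta/\alpha}$ for $s \le t/2$, so that
\[
H^0(s,z)H^0(t-s,z)|z|^{\delta-\alpha} \le c\bigl(|z|^{\delta-\alpha} + s^{\delta/\alpha}|z|^{-\alpha} + t^{\delta/\alpha}|z|^{-\alpha} + s^{\delta/\alpha} t^{\delta/\alpha} |z|^{-\alpha-\delta}\bigr),
\]
producing four summands $J_1,J_2,J_3,J_4$. For $J_1$ and $J_2$ (weights with $\beta\in\{\alpha-\delta,\alpha\}$ and $\gamma\in\{0,\delta\}$), the assumption $\delta>0$ places every instance of Lemma~\ref{lem:int-aux} in the non-logarithmic regime $\gamma>\beta-\alpha$. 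Combining this with the bound $p^0(t,z,y) \le c\bigl(t^{-d/\alpha}\wedge t|z-y|^{-d-\alpha}\bigr)$ and treating the two regimes $|x-y|^\alpha \le t$ and $|x-y|^\alpha > t$ separately yields $J_1+J_2 \le c\,t^{\delta/\alpha} p^0(t,x,y)$ directly.

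The critical step is the handling of $J_3$ and especially $J_4$, where a direct application of Lemma~\ref{lem:int-aux} is either log-critical or breaks down. For these, I invoke the Duhamel identity
\[
\int_0^t\!\int_{\Rd} p^0(s,x,z)|z|^{-\alpha}\tilde{p}^{\,0}_{V^0_\delta}(t-s,z,y)\,\d z\,\d s = \kappa_\delta^{-1}\bigl(\tilde{p}^{\,0}_{V^0_\delta}(t,x,y)-p^0(t,x,y)\bigr)\le c\,\tilde{p}^{\,0}_{V^0_\delta}(t,x,y),
\]
and combine it with $\tilde{p}^{\,0}_{V^0_\delta}(t-s,z,y)\gtrsim p^0(t-s,z,y) \approx p^0(t,z,y)$ to deduce
\[
\int_0^{t/2}\!\!\int_{\Rd} p^0(s,x,z)|z|^{-\alpha}p^0(t,z,y)\,\d z\,\d s \le c\, p^0(t,x,y)H^0(t,x)H^0(t,y),
\]
which—once multiplied by $t^{\delta/\alpha}$ and combined with the $H^0(t,x)H^0(t,y)$ already extracted in the reduction step—controls $J_3$. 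For $J_4$, I rewrite $s^{\delta/\alpha}|z|^{-\delta}$ as (the dominant part of) $H^0(s,z)-1$ and reabsorb it back into $\tilde{p}^{\,0}_{V^0_\delta}(s,x,z)\approx p^0(s,x,z)H^0(s,x)H^0(s,z)$, recognising the resulting expression as the integrand of a second Duhamel term and closing the estimate as above.

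The main obstacle is exactly $J_4$, the doubly critical term: a termwise use of Lemma~\ref{lem:int-aux} produces a logarithm that cannot be absorbed into $t^{\delta/\alpha}$. The interplay between the Duhamel formula and the sharp two-sided estimates of \cite{MR3933622} is essential here, and the assumption $\delta<\alpha$ plays the double role of ensuring that the weights in $J_1,J_2,J_3$ stay strictly subcritical for Lemma~\ref{lem:int-aux} and that the reabsorption step for $J_4$ produces integrable factors.
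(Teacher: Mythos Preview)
There is a genuine gap in the bookkeeping of the $H^0$ factors. Your reduced target $J(t,x,y)\le c\,t^{\delta/\alpha}p^0(t,x,y)$ is simply false: already $J_3=t^{\delta/\alpha}\int_0^{t/2}\!\int p^0(s,x,z)|z|^{-\alpha}p^0(t,z,y)\,\d z\,\d s$ contains, via the logarithmic case of Lemma~\ref{lem:int-aux}, a factor comparable to $\log(1+t|x|^{-\alpha})$, which diverges as $|x|\to 0$ while the right-hand side stays bounded. Your Duhamel patch for $J_3$ correctly gives $J_3\le c\,t^{\delta/\alpha}p^0(t,x,y)H^0(t,x)H^0(t,y)$, but you have \emph{already} pulled out a factor $H^0(t,x)H^0(t,y)$ in the reduction step by bounding $H^0(s,x)\le H^0(t,x)$ and $H^0(t-s,y)\le H^0(t,y)$. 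Combining the two produces $[H^0(t,x)H^0(t,y)]^2$ on the left, which is one power of $H^0(t,x)H^0(t,y)$ too many. The same overcount occurs in $J_4$; the ``reabsorption'' of $s^{\delta/\alpha}|z|^{-\delta}$ into $\tilde p^{\,0}_{V^0_\delta}(s,x,z)$ introduces yet another $H^0(s,x)$ that you have already spent.

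The paper avoids this by \emph{not} bounding $H^0(s,x)$ by $H^0(t,x)$ outright. Instead it uses the 3G inequality for $p^0$ to pull out $p^0(t,x,y)$ directly, and then the swap inequality
\[
H^0(s,x)H^0(t,z)\le H^0(t,x)H^0(s,z)+t^{\delta/\alpha}|z|^{-\delta},
\]
which splits $H^0(s,x)H^0(s,z)H^0(t-s,z)$ into two groups: one already carrying the prefactor $H^0(t,x)$ and landing in the strict case $\gamma>\beta-\alpha$ of Lemma~\ref{lem:int-aux}; the other \emph{without} an $H^0(t,x)$ prefactor but landing in the logarithmic case $\gamma=\beta-\alpha$, whose output $\log(1+(t|x|^{-\alpha})^{k\delta/\alpha+1})$ is precisely $\le c\,H^0(t,x)$. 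In other words, the needed factor $H^0(t,x)$ is \emph{produced} by the log estimate rather than extracted in advance; your approach spends it twice.
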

\begin{proof}
Recall from
\cite[Theorem~1.1 and (2.4)]{MR3933622}
that
\begin{align}\label{approx:BGJP}
\tilde{p}^{\,0}_{V^0_\delta}(t,x,y) \approx p^{0}(t,x,z)H^0(t,x)H^0(t,y)\,,
\end{align}
where $H^0(t,x)=1+t^{\delta/\alpha}|x|^{-\delta}$
as introduced in Section~\ref{sec:main}.
Note that for $0<s<t$ we have $H^0(t-s,y)\leq H^0(t,y)$ and
\begin{align*}
H^0(s,x)H^0(s,z)H^0(t-s,z)
&\leq H^0(s,z) H^0(s,x)H^0(t,z)\\
&\leq H^0(s,z) \left(H^0(t,x) H^0(s,z)+\frac{t^{\delta/\alpha}}{|z|^\delta} \right)\\
&=
H^0(t,x)\left(1+\frac{2s^{\delta/\alpha}}{|z|^\delta} +\frac{s^{2\delta/\alpha}}{|z|^{2\delta}}\right)
+ t^{\delta/\alpha} \left( \frac{1}{|z|^\delta} + \frac{s^{\delta/\alpha}}{|z|^{2\delta}}\right).
\end{align*}
By Lemma~\ref{lem:int-aux} we get for $k\in\{0,1,2\}$,
$$
\int_0^t \int_{\Rd} s^{k\delta/\alpha} p^{0}(s,x,z)|z|^{-k\delta}|z|^{\delta-\alpha} \,\d{z}\d{s}
\leq c t^{\delta/\alpha}\,,
$$
and for $k\in\{0,1\}$,
$$
\int_0^t \int_{\Rd} s^{k \delta/\alpha} p^{0}(s,x,z)|z|^{-(k+1)\delta} |z|^{\delta-\alpha}\, \d{z}\d{s}
\leq c  \log\left(1+\left(t|x|^{-\alpha}\right)^{k\delta/\alpha+1}\right)\leq c H^0(t,x)\,.
$$
Thus
\begin{align*}
\int_0^t \int_{\Rd}p^{0}(s,x,z)H^0(s,x)H^0(s,z)H^0(t-s,y)H^0(t-s,z) |z|^{\delta-\alpha} \,\d{z}\d{s}
\leq c t^{\delta/\alpha} H^0(t,x) H^0(t,y)\,.
\end{align*}
Similarly,
\begin{align*}
\int_0^t \int_{\Rd}p^{0}(t-s,z,y)H^0(s,x)H^0(s,z)H^0(t-s,y)H^0(t-s,z) |z|^{\delta-\alpha} \d{z}\d{s} \leq c t^{\delta/\alpha}H^0(t,x)H^0(t,y)\,.
\end{align*}
Finally, the result follows from the latter two inequalities combined with
\begin{align*}
&\tilde{p}^{\,0}_{V^0_\delta}(s,x,z)
\tilde{p}^{\,0}_{V^0_\delta}(t-s,z,y)\\
&\leq c p^{0}(t,x,y)\Big(p^{0}(s,x,z)+
p^{0}(t-s,z,y)\Big)
H^0(s,x)H^0(s,z)H^0(t-s,y)H^0(t-s,z)\,,
\end{align*}
which holds by \eqref{approx:BGJP} and 3G-inequality for 
$p^0(t,x,y)$, see 
\cite[Theorem~4]{MR2283957}.
\end{proof}

We show that $V_\delta - V^0_\delta$
can be conveniently used to perturb $\tilde{p}^{\, 0}_{V_\delta^0}$.
\begin{corollary}\label{cor:Kato}
Let $\delta \in (0,\frac{d-\alpha}{2}]$.
For every $T>0$ there is a constant $c>0$ such that for all $t\in (0,T]$ and $x,y\in\Rdz$,
\begin{align*}
\int_0^t \int_{\Rd} \tilde{p}^{\,0}_{V^0_\delta}(s,x,z) \Big(V_\delta(z) - V^0_\delta(z)\Big) \tilde{p}^{\,0}_{V^0_\delta}(t-s,z,y)\, \d{z} \d{s} \leq c
\max\{t, t^{\delta/\alpha}\}\, \tilde{p}^{\,0}_{V_\delta^0}(t,x,y)\,. 
\end{align*}
\end{corollary}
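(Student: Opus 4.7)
The plan is to split the inner integration at $|z|=1/2$, bound the difference $V_\delta(z)-V^0_\delta(z)$ using Lemma~\ref{lem:diff_est} on the inner region and by a universal constant on the outer region, then dispatch each piece via either the Chapman--Kolmogorov equation~\eqref{eq:C-K} or Lemma~\ref{lem:Kato}. Put $q:=V_\delta-V^0_\delta\ge 0$ (Corollary~\ref{cor:diff_nonneg}).

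On $\{|z|>1/2\}$ both $V_\delta$ and $V^0_\delta=\kappa_\delta|z|^{-\alpha}$ are bounded. For $V^0_\delta$ this is immediate; for $V_\delta$, the ratio $K_{(\delta+\alpha)/2}(r)/K_{\delta/2}(r)$ is bounded on $[1/2,\infty)$ (it is continuous on compact intervals and tends to $1$ at infinity by $K_\nu(r)\sim\sqrt{\pi/(2r)}\,e^{-r}$), while $|z|^{-\alpha/2}$ is bounded there. Hence $q(z)\le C$ on $\{|z|>1/2\}$, and \eqref{eq:C-K} yields
$$\int_0^t\!\int_{|z|>1/2}\tilde{p}^{\,0}_{V^0_\delta}(s,x,z)\,q(z)\,\tilde{p}^{\,0}_{V^0_\delta}(t-s,z,y)\,\d{z}\,\d{s}\le Ct\,\tilde{p}^{\,0}_{V^0_\delta}(t,x,y).$$

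On $\{|z|\le 1/2\}$ I use Lemma~\ref{lem:diff_est}. If $\delta\ge\alpha$ then each of the three sub-cases there gives a function bounded on $\{|z|\le 1/2\}$: for $\delta\in[\alpha,2)$ the exponent $\delta-\alpha\ge 0$; for $\delta=2$ the logarithm is tamed by $|z|^{2-\alpha}$ since $2-\alpha>0$; for $\delta>2$ again $2-\alpha>0$. Thus $q$ is uniformly bounded on all of $\R^d$ and a second application of~\eqref{eq:C-K} gives the overall bound $Ct\,\tilde{p}^{\,0}_{V^0_\delta}(t,x,y)\le C\max\{t,t^{\delta/\alpha}\}\,\tilde{p}^{\,0}_{V^0_\delta}(t,x,y)$. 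If instead $\delta<\alpha$, then $\delta\in(0,\alpha)\cap(0,(d-\alpha)/2]$ and Lemma~\ref{lem:diff_est} gives $q(z)\le C|z|^{\delta-\alpha}$ on $\{|z|\le 1/2\}$; bounding the inner-region integrand from above by $\tilde{p}^{\,0}_{V^0_\delta}(s,x,z)\cdot C|z|^{\delta-\alpha}\cdot\tilde{p}^{\,0}_{V^0_\delta}(t-s,z,y)$ and extending the integration to all of $\R^d$ (the integrand is non-negative), Lemma~\ref{lem:Kato} yields the bound $Ct^{\delta/\alpha}\,\tilde{p}^{\,0}_{V^0_\delta}(t,x,y)$ for the inner part. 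Adding to the outer-region contribution gives $C(t+t^{\delta/\alpha})\tilde{p}^{\,0}_{V^0_\delta}(t,x,y)\le 2C\max\{t,t^{\delta/\alpha}\}\tilde{p}^{\,0}_{V^0_\delta}(t,x,y)$.

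The key step, and the only genuine obstacle, is the singular sub-case $\delta<\alpha$, where $q$ blows up like $|z|^{\delta-\alpha}$ near the origin and a naive Chapman--Kolmogorov argument fails. This is precisely what Lemma~\ref{lem:Kato} was tailored to resolve, and its hypothesis $\delta\in(0,\alpha)\cap(0,(d-\alpha)/2]$ matches exactly the parameter range in which the singularity appears. The remaining pieces are routine, with mild care needed at the borderline $\delta=2$, where the $\log$ factor of Lemma~\ref{lem:diff_est} is tame thanks to $2-\alpha>0$.
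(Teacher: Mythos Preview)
Your proof is correct and follows essentially the same approach as the paper: split into the cases $\delta\ge\alpha$ (where $q$ is bounded and Chapman--Kolmogorov suffices) and $\delta<\alpha$ (where one splits at $|z|=1/2$, bounds $q$ by a constant outside and by $c|z|^{\delta-\alpha}$ inside via Lemma~\ref{lem:diff_est}, and invokes Lemma~\ref{lem:Kato}). The only cosmetic difference is that the paper cites Lemma~\ref{lem:V-decreasing} for the boundedness of $V_\delta$ away from the origin, whereas you argue it directly from Bessel asymptotics.
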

\begin{proof}
Let $q(z):=V_{\delta}(z)-V^0_\delta(z)=V_{\delta}(z)-\kappa_\delta |z|^{-\alpha} \geq 0$,
see Corollary~\ref{cor:diff_nonneg}.
If $\delta \geq \alpha$ the inequality is trivial, because due to 
Lemmas~\ref{lem:V-decreasing} and~\ref{lem:diff_est} the function
$q$ is bounded, and $\tilde{p}^{\, 0}_{V_\delta^0}$ satisfies Chapman-Kolmogorov equation \eqref{eq:C-K}.
If $0<\delta<\alpha$, then $q$ is bounded for $|z|\geq 1/2$, while for $|z|\leq 1/2$ we can use
Lemmas~\ref{lem:diff_est}
and~\ref{lem:Kato} to obtain the desired inequality.
\end{proof}

\begin{proposition}\label{prop:upper_bound}
Let $\delta \in (0,d-\alpha)$.
For every $T>0$ there is a constant $c>0$ such that for all $t\in (0,T]$ and $x,y\in\Rdz$,
$$
\tilde{p}^{\,0}_{V_\delta^0}(t,x,y) \leq \tilde{p}^{\,0}_{V_{\delta}}(t,x,y)\leq 
c \,\tilde{p}^{\,0}_{V_\delta^0}(t,x,y)\,.
$$
\end{proposition}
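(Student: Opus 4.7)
The left-hand inequality is immediate: since $q := V_\delta - V_\delta^0 \geq 0$ by Corollary~\ref{cor:diff_nonneg}, each Duhamel iterate defining $\tilde{p}^{\,0}_{V_\delta}$ dominates the corresponding iterate for $\tilde{p}^{\,0}_{V_\delta^0}$, so $\tilde{p}^{\,0}_{V_\delta^0}\leq \tilde{p}^{\,0}_{V_\delta}$. My plan for the non-trivial upper bound is to treat $V_\delta$ as a second perturbation added on top of $V_\delta^0$. By the perturbation-of-perturbation identity~\eqref{eq:pert-of-pert},
$$
\tilde{p}^{\,0}_{V_\delta} \;=\; \widetilde{(\tilde{p}^{\,0}_{V_\delta^0})}_{\,q},
$$
so it suffices to establish a Kato-type bound for $q$ against the reference kernel $\tilde{p}^{\,0}_{V_\delta^0}$ and iterate it into a comparison of the full series.

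I begin with $\delta\in(0,\tfrac{d-\alpha}{2}]$, exactly the range covered by Corollary~\ref{cor:Kato}: there is $c_0 = c_0(T)$ such that for all $t \in (0,T]$ and $x,y \in \Rdz$,
$$
\int_0^t\!\!\int_{\Rd} \tilde{p}^{\,0}_{V_\delta^0}(s,x,z)\, q(z)\,\tilde{p}^{\,0}_{V_\delta^0}(t-s,z,y)\,\d z\,\d s \;\leq\; c_0 \max\{t,t^{\delta/\alpha}\}\, \tilde{p}^{\,0}_{V_\delta^0}(t,x,y).
$$
Fixing $\tau \in (0,T]$ small enough that $c_0\max\{\tau,\tau^{\delta/\alpha}\} \leq 1/2$, a straightforward induction on the Duhamel iterates defining $\widetilde{(\tilde{p}^{\,0}_{V_\delta^0})}_{\,q}$ yields, on $(0,\tau]\times\Rdz\times\Rdz$, a bound of the $n$-th iterate by $2^{-n}\tilde{p}^{\,0}_{V_\delta^0}$; summing gives $\tilde{p}^{\,0}_{V_\delta} \leq 2\,\tilde{p}^{\,0}_{V_\delta^0}$ there. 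To extend from $(0,\tau]$ to the full range $(0,T]$, I iterate the Chapman--Kolmogorov equation~\eqref{eq:C-K}, which holds for both transition densities: decomposing $t \leq T$ into at most $\lceil T/\tau\rceil$ blocks of length $\tau$ propagates the comparison, each block multiplying the constant by a fixed factor.

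For $\delta\in(\tfrac{d-\alpha}{2},d-\alpha)$, rather than reproving the Kato estimate in that range I invoke Remark~\ref{rem:reflect_beta}: with $\delta' := d-\alpha-\delta \in (0,\tfrac{d-\alpha}{2})$, one has $\kappa_\delta = \kappa_{\delta'}$, hence $V_\delta^0 = V_{\delta'}^0$ and thus $\tilde{p}^{\,0}_{V_\delta^0} = \tilde{p}^{\,0}_{V_{\delta'}^0}$, while $V_\delta < V_{\delta'}$. Monotonicity of the Schr\"odinger perturbation series then gives $\tilde{p}^{\,0}_{V_\delta} \leq \tilde{p}^{\,0}_{V_{\delta'}}$, and the bound established in the previous paragraph applied to $\delta'$ concludes the argument. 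The genuine obstacle here is the Kato-type inequality itself, which rests on the sharp control of $V_\delta - V_\delta^0$ from Lemma~\ref{lem:diff_est} combined with the 3G-based integral estimate in Lemma~\ref{lem:Kato}; once that input is secured, the remainder is a clean iterative comparison and a reflection argument.
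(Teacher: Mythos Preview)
Your proof is correct and follows essentially the same approach as the paper. The paper invokes \cite[Theorem~2]{MR2457489} directly to pass from the Kato-type bound of Corollary~\ref{cor:Kato} to the comparison $\tilde{p}^{\,0}_{V_\delta}\leq c\,\tilde{p}^{\,0}_{V_\delta^0}$, whereas you spell out the content of that theorem (geometric control of the Duhamel iterates on a short interval, then propagation via Chapman--Kolmogorov); the reflection step for $\delta\in(\tfrac{d-\alpha}{2},d-\alpha)$ is likewise the same as the paper's, phrased in terms of $V_\delta<V_{\delta'}$ rather than the equivalent $q_\delta\leq q_{\delta'}$.
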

\begin{proof}
According to
\eqref{eq:pert-of-pert}
we treat $\tilde{p}^{\,0}_{V_{\delta}}$
as the  perturbation of
$\tilde{p}^{\,0}_{V^0_\delta}$ by 
$q_\delta(z):=V_{\delta}(z)-V^0_\delta(z)=V_{\delta}(z)-\kappa_\delta |z|^{-\alpha} \geq 0$,
see Corollary~\ref{cor:diff_nonneg}.
If $\delta\in (0,\frac{d-\alpha}{2}]$
the result follows from
\cite[Theorem~2]{MR2457489} since $q$ is relatively Kato for $\tilde{p}^{\,0}_{V_\delta^0}$ by
Corollary~\ref{cor:Kato}.
If $\delta\in [\frac{d-\alpha}{2},d-\alpha)$ 
the same argument applies, because
$0\leq q_\delta\leq q_{\delta'}$
with $\delta'=d-\alpha-\delta\in (0,\frac{d-\alpha}{2}]$, see
Remark~\ref{rem:reflect_beta}.
The lower bound follows from the trivial inequality $\tilde{p}^{\,0}_{V^0_\delta}\leq \tilde{p}^{\,0}_{V_\delta}$, see the comment preceding Lemma~\ref{lem:int-aux}.
\end{proof}

We will use the following observation several times throughout the paper.
\begin{remark}\label{rem:often}
Let $\delta \in (0,d-\alpha)$.
Recall that $\tilde{p}:= \tilde{p}_{V_{\delta}}$
 is the Schr{\"o}dinger perturbation of $p:=p^1$ by $V_\delta:=V^1_\delta$.
Directly from \eqref{def:p^m} we have $p\leq p^0$, and by Proposition~\ref{prop:upper_bound},
$$
\tilde{p}\leq 
\tilde{p}^{\,0}_{V_{\delta}}\leq 
c\, \tilde{p}^{\,0}_{V_\delta^{0}}
$$
holds on $(0,T]\times\Rdz\times\Rdz$,
whenever $\delta \in (0,d-\alpha)$ and $T>0$ are fixed.
\end{remark}

We are ready to show Proposition~\ref{prop:weak_sol}.
The essence 
of the
proof is to take a similar equality 
for the original transition density $p$
and the operator 
$\partial_u - (-\Delta_z+1)^{\alpha/2}$,
and to use
the algebraic structure of the
perturbed transition density $\tilde{p}$.
That idea goes back to \cite[(39)]{MR2457489},
\cite[Lemma~4]{MR3000465}
and \cite[Lemma~2.1]{MR3514392}.
In doing so one should ensure that certain integrals converge absolutely. This can be guaranteed using Proposition~\ref{prop:upper_bound}
and
\cite[Proposition~3.2]{MR3933622}.

\begin{proof}[Proof of Proposition~\ref{prop:weak_sol}]
Recall that $\tilde{p}:= \tilde{p}_{V_{\delta}}$
that is the perturbation of $p:=p^1$ by $V_\delta:=V^1_\delta$.
For $s\in\R$ and $x\in\Rdz$ we define
\begin{align*}
Pf(s,x)&=\int_s^{\infty}\int_{\Rd} p(u-s,x,z)f(u,z)\,\d{z}\d{u}\,,\\
\tilde{P}f(s,x)&=\int_s^{\infty}\int_{\Rd} \tilde{p}(u-s,x,z)f(u,z)\,\d{z}\d{u}\,,\\
V_\delta f(s,x)&=V_\delta(x)f(s,x)\,.
\end{align*}
for a jointly measurable function $f$ such that the integrals converge absolutely.
Let $\psi(u,z)=\left(\partial_u - (-\Delta_z+1)^{\alpha/2}\right)\phi(u,z)$.
It is well known that 
for all $s\in\R$, $x\in\Rd$ and
$\phi \in C_c^{\infty}(\R\times\Rd)$ we have (see \cite[Theorem~4.1 and (1.16)]{MR3514392})
\begin{align*}
\int_s^{\infty}\int_{\Rd} p(u-s,x,z)
\left[\partial_u - (-\Delta_z+1)^{\alpha/2} \right] \phi(u,z) \,\d{z}\d{u}
 =-\phi(s,x)\,.
\end{align*}
Hence,
$P\psi=-\phi$.
Then,
$$
\tilde{P}(\psi+V_\delta \phi)=
(P+\tilde{P}V_\delta P)\psi
+\tilde{P}V_\delta \phi
=-\phi+\tilde{P}V_\delta P \psi
+\tilde{P}V_\delta (-P\psi)=-\phi\,,
$$
which is the desired equality, but we  need to make sure that the integrals converge absolutely.
We consider
$$
\tilde{P}^0 f(s,x)=\int_s^{\infty}\int_{\Rd} \tilde{p}^{\,0}_{V_\delta^0}(u-s,x,z)f(u,z)\,\d{z}\d{s}\,.
$$
Note that the function $\psi$ is bounded 
\cite[p. 211]{MR1739520}
and zero if $|u|$ is large.
By Remark~\ref{rem:often}, for every $s\in\R$ and $x\in\Rdz$ there is $c$ such that
$$
\tilde{P}|\psi| (s,x)\leq c \tilde{P}^0|\psi|(s,x)<\infty\,.
$$
The finiteness follows from
\cite[Proposition~3.2]{MR3933622}
and because
$\tilde{P}^0$ is the same for $\delta$ and $\delta'=d-\alpha-\delta$,
see Remark~\ref{rem:reflect_beta}.
This implies that $P|\psi|$, $\tilde{P}|\psi|$, $\tilde{P}V_\delta P|\psi|$ and $\tilde{P}V_\delta|\phi|$ are finite.
Indeed, by \eqref{eq:Duh} we have
$$
P|\psi|+ \tilde{P}V_\delta P|\psi|
=\tilde{P}|\psi|<\infty
\qquad \mbox{and}\qquad
\tilde{P}V_\delta|\phi|\leq \tilde{P}V_\delta P|\psi|<\infty\,. 
$$
\end{proof}

\begin{proposition}\label{prop:m-to-zero}
Let $\delta\in(0,d-\alpha)$.
If $m\downarrow 0$, then for all $t>0$, $x,y\in\Rdz$,
$$
p^m(t,x,y) \uparrow p^0(t,x,y)\,,\qquad\quad
V_\delta^m (x) \downarrow V_\delta^0(x)\,,\qquad\quad
\tilde{p}^{\,m}(t,x,y)\to \tilde{p}^{\,0}(t,x,y)\,.
$$
\end{proposition}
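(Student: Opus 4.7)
The plan is to establish the three limits in sequence, using the first two as inputs to a termwise dominated-convergence argument in the Schr{\"o}dinger series for the third.

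For $p^m\uparrow p^0$, I would invoke the subordination formula \eqref{def:p^m}: since
\[
p^m(t,x,y)=\int_0^{\infty} g_s(x-y)\,e^{-m^{2/\alpha}s}\,\eta_t(s)\,\d{s}
\]
and $e^{-m^{2/\alpha}s}\uparrow 1$ as $m\downarrow 0$, the monotone convergence theorem immediately gives the claim. For $V_\delta^m\downarrow V_\delta^0$, I would combine Lemma~\ref{lem:scal-m}, which gives $V_\delta^m(x)=mV_\delta(m^{1/\alpha}x)$, with $V_\delta^0(x)=\kappa_\delta|x|^{-\alpha}$ to obtain
\[
\frac{V_\delta^m(x)}{V_\delta^0(x)}=\frac{V_\delta(m^{1/\alpha}x)}{\kappa_\delta|m^{1/\alpha}x|^{-\alpha}}.
\]
By \eqref{properties} the right-hand side is a radial, monotone increasing function of $m^{1/\alpha}|x|$, hence decreases as $m\downarrow 0$; by Lemma~\ref{lem:lim_V} its limit equals $1$. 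This yields simultaneously the monotonicity, the pointwise limit $V_\delta^m\to V_\delta^0$, and the uniform bound $V_\delta^m\le V_\delta$ for $m\in(0,1]$.

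For the third and most substantial convergence, I would pass to the limit termwise in $\tilde{p}^{\,m}=\sum_{n\ge 0}(p^m)_n$. Introduce an $m$-free comparison sequence by $\hat p_0:=p^0$ and
\[
\hat p_{n+1}(t,x,y):=\int_0^t\!\!\int_{\Rd}\hat p_n(s,x,z)\,V_\delta(z)\,p^0(t-s,z,y)\,\d{z}\,\d{s},
\]
so that $\sum_n\hat p_n=\tilde{p}^{\,0}_{V_\delta}$, which is finite by Proposition~\ref{prop:upper_bound}. For $m\in(0,1]$, the bounds $p^m\le p^0$ and $V_\delta^m\le V_\delta$ propagate through a one-step induction to the uniform majorant $(p^m)_n\le\hat p_n$. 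A parallel induction, applying dominated convergence in the integral defining $(p^m)_{n+1}$ with majorant $\hat p_n(s,x,z)V_\delta(z)p^0(t-s,z,y)$ and the two pointwise limits established above, yields $(p^m)_n(t,x,y)\to(p^0)_n(t,x,y)$ for every $n$. A final dominated-convergence argument on the counting measure over $n$, justified by $\sum_n\hat p_n(t,x,y)<\infty$, delivers $\tilde{p}^{\,m}(t,x,y)\to\tilde{p}^{\,0}(t,x,y)$.

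The main obstacle is not the individual passages to the limit, each of which is routine, but the construction of the $m$-uniform summable majorant $\{\hat p_n\}$ controlling the whole series; this is exactly where Proposition~\ref{prop:upper_bound}, and through it the sharp estimates of \cite{MR3933622} for $\tilde{p}^{\,0}_{V_\delta^0}$, are indispensable.
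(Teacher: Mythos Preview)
Your proof is correct and follows essentially the same approach as the paper: both use \eqref{def:p^m} for the monotone convergence of $p^m$, Lemma~\ref{lem:scal-m} together with \eqref{properties} and Lemma~\ref{lem:lim_V} for the monotone convergence of $V_\delta^m$, and then dominated convergence with the $m$-free majorant $\tilde{p}^{\,0}_{V_\delta}=\sum_n\hat p_n$ (finite by Proposition~\ref{prop:upper_bound}) to pass to the limit in the perturbation series. Your write-up makes the termwise induction and the two layers of dominated convergence more explicit than the paper's terse version, but the substance is identical.
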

\begin{proof}
The convergence and monotonicity of $p^m$ follows from \eqref{def:p^m}. 
The convergence of $V_\delta^m$ stems from Lemmas~\ref{lem:lim_V} and~\ref{lem:scal-m}, and
the monotonicity from \eqref{properties}.
To prove the third convergence we use the dominated convergence theorem. To justify its use we note that
 $p^m\leq p^0$ and $V_\delta^m\leq V_\delta^1$
if $m\in (0,1]$,
and we let
$\tilde{p}^{\,0}_{V_\delta}$
(the Schr{\"o}dinger
perturbation of $p^{0}$ by $V_\delta=V_\delta^1$)
to be the majorant, see also
Proposition~\ref{prop:upper_bound} for summability or integrability.
\end{proof}

\section{Heat kernel of $-(-\Delta+1)^{\alpha/2}+V_{\delta}$}
\label{sec:our}

Under the constraint \fbox{in the whole section that $\alpha\in(0,2\land d)$ and $\delta\in (0,\tfrac{d-\alpha}{2}]$} 
we finally consider
the heat kernel $\tilde{p}$ for
the operator \eqref{def:operator}. As specified after Theorem~\ref{thm:main-1} and in the comment preceding Lemma~\ref{lem:scal-m-tp}, we investigate
$\tilde{p}:= \tilde{p}_{V_{\delta}}$
that is the Schr{\"o}dinger perturbation of $p:=p^1$ by $V_\delta:=V^1_\delta$.

\subsection{Upper bound}\label{subsec:upper_bound}

We consider the following function
$$
H(t,x):=1+\frac{h_\delta(x)}{h_\delta(t^{1/\alpha})}\,.
$$
Note that for every $T>0$ we have 
on $(0,T]\times\Rdz$
(see Lemma~\ref{lem:h-expl}  and \eqref{eq:lim_K}),
\begin{align}\label{approx:H_H0}
H(t,x)\approx H^0(t,x)\,.
\end{align}

\begin{proposition}\label{prop:|x-y|-small}
For every $T,R>0$ there exists a constant $c$ such that for all
$t\in (0,T]$ and $x,y\in\Rdz$ satisfying $|x-y|\leq R$,
$$
\tilde{p}(t,x,y)\leq c p(t,x,y)H(t,x)H(t,y)\,.
$$
\end{proposition}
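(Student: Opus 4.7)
The plan is to reduce the statement to the sharp two-sided bound \eqref{approx:BGJP} already known for the fractional Laplacian with the Hardy potential, and then to compensate for the difference between $p^0$ and $p$ on the bounded set $\{|x-y|\leq R\}$. Combined with the equivalence \eqref{approx:H_H0} of $H$ and $H^0$ on $(0,T]$, this should immediately give the claim.

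First I would apply Remark \ref{rem:often} to obtain
$$
\tilde{p}(t,x,y)\le c\,\tilde{p}^{\,0}_{V_\delta^{0}}(t,x,y),\qquad (t,x,y)\in (0,T]\times\Rdz\times\Rdz,
$$
with $c=c(d,\alpha,\delta,T)$. Then \eqref{approx:BGJP} yields
$$
\tilde{p}^{\,0}_{V_\delta^{0}}(t,x,y)\le c\, p^0(t,x,y)\,H^0(t,x)\,H^0(t,y),
$$
and \eqref{approx:H_H0} lets me swap $H^0$ for $H$ on $(0,T]$. The remaining task therefore reduces to the pointwise inequality
$$
p^0(t,x,y)\le c\,p(t,x,y)\qquad\text{on}\quad \{(t,x,y)\colon 0<t\le T,\ 0<|x-y|\le R\}.
$$

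This last comparison is the main obstacle, since globally $p^0$ has polynomial and $p$ exponential spatial decay; the restriction $|x-y|\le R$ is exactly what neutralises this discrepancy. Using the sharp bounds for $p^0$ recalled after Theorem \ref{thm:MAIN} (and standard in the literature) together with the estimate for $p$ from Subsection \ref{sec:rel-den}, it is enough to verify that there exists $c(R)>0$ with
$$
K_{(d+\alpha)/2}(r)\,r^{(d+\alpha)/2}\ge c(R)\qquad\text{for all}\quad r\in(0,R].
$$
This follows from the small-argument asymptotic $K_\nu(r)\sim \tfrac12\Gamma(\nu)(r/2)^{-\nu}$ as $r\to 0^+$, which forces the left-hand side to tend to a positive constant at the origin, combined with the continuity and strict positivity of $K_{(d+\alpha)/2}$ on any compact subset of $(0,\infty)$. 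A routine two-term minimum comparison then promotes this Bessel inequality to $p^0\le C(R,T)\,p$ on the required region, and chaining the estimates yields the proposition with a constant depending only on $d,\alpha,\delta,T,R$.
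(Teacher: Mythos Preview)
Your proof is correct and follows essentially the same route as the paper: apply Remark~\ref{rem:often} and \eqref{approx:BGJP} to bound $\tilde p$ by $p^0 H^0 H^0$, replace $H^0$ by $H$ via \eqref{approx:H_H0}, and then show $p^0\le c\,p$ on $\{t\le T,\ |x-y|\le R\}$. The only cosmetic difference is that the paper invokes \eqref{approx:p-stable} directly for the last comparison, whereas you re-derive that bound through the Bessel-function lower estimate $K_{(d+\alpha)/2}(r)\,r^{(d+\alpha)/2}\ge c(R)$ on $(0,R]$; the two arguments are equivalent.
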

\begin{proof}
By
Remark~\ref{rem:often}
and \eqref{approx:BGJP} we have
$
\tilde{p}(t,x,y) \leq c
p^0(t,x,y)H^0(t,x)H^0(t,y)
$.
Now, due to \eqref{approx:H_H0} it suffices to observe that
$
p^0(t,x,y)\leq
c( t^{-d/\alpha}\land \frac{t}{|x-y|^{d+\alpha}})
\leq c p(t,x,y)\,,
$
whenever $t\in (0,T]$ and $|x-y|\leq R$, see  
e.g. \cite[(2.4)]{MR3933622} and
\eqref{approx:p-stable}.
\end{proof}

Recall that $V_\delta$ is 
radial decreasing (see Lemma~\ref{lem:V-decreasing}) thus we can define the inverse of its radial profile, which we denote by $V_\delta^{-1}$.

\begin{lemma}
For all $t>0$ and $x,y\in\Rdz$,
\begin{align}\label{ineq:tp-cut_ball}
\tilde{p}(t,x,y)\leq 2p(t,x,y)+2\int_0^t\int_{|z|\leq V_\delta^{-1}(\frac1{2t})}\tilde{p}(t-s,x,z) V_\delta(z) p(s,z,y)\,\d{z}\d{s}\,.
\end{align}
\end{lemma}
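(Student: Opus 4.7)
The plan is to run a standard split-and-absorb argument on the Duhamel identity. After the change of variable $s\mapsto t-s$ in \eqref{eq:Duh}, the perturbation series $\tilde p=\tilde p_{V_\delta}$ satisfies
\begin{equation*}
\tilde{p}(t,x,y)=p(t,x,y)+\int_0^t\!\int_{\Rd}\tilde{p}(t-s,x,z)\,V_\delta(z)\,p(s,z,y)\,\d{z}\d{s}.
\end{equation*}
Because $V_\delta$ is radial and, by Lemma~\ref{lem:V-decreasing}, strictly decreasing in the radial variable, with radial profile running continuously from $+\infty$ at $0$ to $0$ at $\infty$, its inverse $V_\delta^{-1}$ is well defined and
$V_\delta(z)\le 1/(2t)$ if and only if $|z|\ge V_\delta^{-1}(1/(2t))$. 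I would split the spatial integration along this level set.

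On the outer region $\{|z|>V_\delta^{-1}(1/(2t))\}$ I would replace $V_\delta(z)$ by the uniform upper bound $1/(2t)$. The remaining integrand is $\tilde p(t-s,x,z)\,p(s,z,y)$; using $p\le\tilde p$ (which holds because $V_\delta\ge 0$ makes every $p_n$ nonnegative, so $\tilde p=\sum_n p_n\ge p_0=p$) and the Chapman--Kolmogorov identity \eqref{eq:C-K} for $\tilde p$, I obtain
\begin{equation*}
\int_0^t\!\int_{\Rd}\tilde{p}(t-s,x,z)\,p(s,z,y)\,\d{z}\d{s}
\le \int_0^t \tilde p(t,x,y)\,\d{s}=t\,\tilde{p}(t,x,y),
\end{equation*}
so the contribution from the outer region is bounded by $\tfrac12\tilde{p}(t,x,y)$.

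Feeding this back into the Duhamel identity yields
\begin{equation*}
\tilde{p}(t,x,y)\le p(t,x,y)+\tfrac12\tilde{p}(t,x,y)+\int_0^t\!\int_{|z|\le V_\delta^{-1}(1/(2t))}\!\tilde{p}(t-s,x,z)V_\delta(z)p(s,z,y)\,\d{z}\d{s},
\end{equation*}
after which I would absorb $\tfrac12\tilde{p}(t,x,y)$ to the left and multiply by $2$ to reach \eqref{ineq:tp-cut_ball}. The only delicate point is that the absorption is legitimate only if $\tilde{p}(t,x,y)<\infty$; this is the issue I expect to flag rather than struggle with, since Remark~\ref{rem:often} already gives the pointwise bound $\tilde{p}\le c\,\tilde{p}^{\,0}_{V_\delta^0}$ on any strip $(0,T]\times\Rdz\times\Rdz$, and $\tilde{p}^{\,0}_{V_\delta^0}$ is known to be finite from \cite{MR3933622}; arbitrarily large $t$ is then handled by iterating \eqref{eq:C-K} in finitely many time steps. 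No substantive analytic obstacle is expected beyond this bookkeeping.
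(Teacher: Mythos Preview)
Your proposal is correct and follows essentially the same route as the paper: Duhamel's formula (after the substitution $s\mapsto t-s$), the radial split at the level set $V_\delta(R)=1/(2t)$, the bound $V_\delta(z)\le 1/(2t)$ on the outer region combined with $p\le\tilde p$ and Chapman--Kolmogorov to produce $\tfrac12\tilde p(t,x,y)$, and then absorption. Your explicit flagging of the finiteness of $\tilde p$ needed for the absorption step (via Remark~\ref{rem:often}) is a point the paper leaves implicit, so if anything your write-up is slightly more careful.
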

\begin{proof}
By Duhamel's formula \eqref{eq:Duh},
the monotonicity of $V_\delta$
and Chapman-Kolmogorov equation \eqref{eq:C-K},
for any $R> 0$ we have
\begin{align*}
\tilde{p}(t,x,y) &= p(t,x,y)+\int_0^t\int_{|z|\leq R} \tilde{p}(t-s,x,z)V_\delta(z)p(s,z,y)\,\d{z}\d{s}\\
&\qquad\qquad\qquad+\int_0^t\int_{|z|> R} \tilde{p}(t-s,x,z)V_\delta(z)p(s,z,y)\,\d{z}\d{s}\\
&\leq
p(t,x,y)+\int_0^t\int_{|z|\leq R} \tilde{p}(t-s,x,z)V_\delta(z)p(s,z,y)\,\d{z}\d{s}
+ t V_\delta(R) \tilde{p}(t,x,y)\,.
\end{align*}
It suffices to take $V_\delta(R)= 1/(2t)$.
\end{proof}

\begin{proposition}\label{prop:tp-mass_upper}
Let $T>0$. There exists a constant $c$ such that for all $t\in(0,T]$, $x\in\Rdz$,
\begin{align}\label{ineq:tp-mass_upper}
\int_{\Rd} \tilde{p}(t,x,y)\,\d{y}\leq c H(t,x)\,.
\end{align}
\end{proposition}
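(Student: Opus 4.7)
The plan is to reduce the mass estimate to a direct computation against the fractional-Laplacian heat kernel $p^0$, bypassing the cut-ball inequality~\eqref{ineq:tp-cut_ball} altogether. By Remark~\ref{rem:often}, for every $T>0$ there is a constant $c$ such that $\tilde{p}\leq c\,\tilde{p}^{\,0}_{V^0_\delta}$ on $(0,T]\times\Rdz\times\Rdz$, and combining this with the two-sided estimate \eqref{approx:BGJP} (imported from \cite{MR3933622}) gives
\begin{align*}
\tilde{p}(t,x,y)\leq c\, p^0(t,x,y)\,H^0(t,x)H^0(t,y)
\end{align*}
on the same range. Integrating in $y$ pulls the factor $H^0(t,x)$ out and reduces the problem to bounding $\int_{\Rd}p^0(t,x,y)H^0(t,y)\,\d{y}$.

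Writing $H^0(t,y)=1+t^{\delta/\alpha}|y|^{-\delta}$, the first summand integrates to $1$ because $p^0$ is a probability kernel. For the second summand the plan is to invoke the spatial moment estimate
\begin{align*}
\int_{\Rd}p^0(t,x,y)|y|^{-\delta}\,\d{y}\leq c\bigl(t^{-\delta/\alpha}\wedge|x|^{-\delta}\bigr)\leq c\,t^{-\delta/\alpha},
\end{align*}
which is \cite[Lemma~2.3]{MR3933622} and is legitimately applicable since $\delta\in(0,\tfrac{d-\alpha}{2}]\subset(0,d)$; it is in fact the same input already used inside the proof of Lemma~\ref{lem:int-aux}. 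Multiplying by $t^{\delta/\alpha}$ yields a constant bound, so $\int p^0(t,x,y)H^0(t,y)\,\d{y}\leq c$, and hence $\int\tilde{p}(t,x,y)\,\d{y}\leq c H^0(t,x)$. A final appeal to \eqref{approx:H_H0} replaces $H^0(t,x)$ by $H(t,x)$ at the cost of a $T$-dependent constant, which matches the statement.

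The main conceptual point is that Proposition~\ref{prop:upper_bound} is already strong enough to carry out the required integration, so no new obstacle appears in this step. It is worth noting that one cannot shortcut the argument by quoting Corollary~\ref{cor:BDK}: the weight $h_\delta(y)$ vanishes at infinity, so the $h_\delta$-weighted bound there does not by itself control the unweighted mass of $\tilde{p}$ without further information on its spatial decay.
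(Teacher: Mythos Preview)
Your argument is correct, and it takes a genuinely different route from the paper. The paper integrates the cut-ball inequality~\eqref{ineq:tp-cut_ball}, then inserts the lower bound $\int_{|y|\le 2c_1 t^{1/\alpha}} p(s,z,y)\,\d{y}\ge c_2$ to convert the $V_\delta$-weighted double integral back into $\int_{|y|\le 2c_1 t^{1/\alpha}}\tilde p(t,x,y)\,\d{y}$, and finally controls this localized mass via the super-median inequality of Corollary~\ref{cor:BDK} together with the monotonicity of $h_\delta$. Your proof instead exploits the global comparison $\tilde p\le c\,\tilde p^{\,0}_{V^0_\delta}$ (Remark~\ref{rem:often}) and the explicit factorized bound~\eqref{approx:BGJP} from \cite{MR3933622}, reducing everything to the elementary moment estimate $\int p^0(t,x,y)|y|^{-\delta}\,\d{y}\le c\,t^{-\delta/\alpha}$. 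Your route is shorter and shows that Proposition~\ref{prop:upper_bound} already carries enough strength for the mass bound; the paper's route, by contrast, is more intrinsic to the relativistic kernel, uses only the super-median property of $h_\delta$ rather than the full sharp estimates of~\cite{MR3933622}, and develops the cut-ball machinery that is in any case indispensable for the subsequent spatially-decaying estimates (Lemma~\ref{lem:|x|-small_|y|-large} and Proposition~\ref{prop:|x-y|-large}). Your closing remark about Corollary~\ref{cor:BDK} is apt: the paper does use it, but only after the cut-ball step has localized the integration to a ball where $h_\delta$ is bounded below.
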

\begin{proof}
Integrating \eqref{ineq:tp-cut_ball} we get
\begin{align*}
\int_{\Rd} \tilde{p}(t,x,y)\,\d{y}\leq 2 +
2\int_0^t\int_{|z|\leq V_\delta^{-1}(\frac1{2t})}\tilde{p}(t-s,x,z) V_\delta(z)\,\d{z}\d{s}\,.
\end{align*}
Note that by Lemma~\ref{lem:lim_V}
there is $c_1$ such that
$V_\delta^{-1}(\frac1{2t})\leq c_1 t^{1/\alpha}$
for all $t\in (0,T]$.
Thus, by \eqref{approx:p-stable}
there is $c_2>0$ such that
for all $0<s<t\leq T$ and
$|z|\leq V_\delta^{-1}(\frac1{2t})$,
$$
c_2\leq \int_{|y|\leq 2c_1t^{1/\alpha}} p(s,z,y)\,\d{y}\,.
$$
Then, by \eqref{eq:Duh} in the second inequality,
\begin{align*}
\int_{\Rd} \tilde{p}(t,x,y)\,\d{y}
&\leq 2 + \frac{2}{c_2}\int_{|y|\leq 2c_1t^{1/\alpha}} \left( \int_0^t\int_{|z|\leq V_\delta^{-1}(\frac1{2t})}\tilde{p}(t-s,x,z) V_\delta(z) p(s,z,y) \,\d{z}\d{s}\right)\d{y}\\
&\leq 2 + \frac{2}{c_2}\int_{|y|\leq 2c_1 t^{1/\alpha}} \tilde{p}(t,x,y)\,\d{y}\,.
\end{align*}
Now, by the monotonicity of $h_\delta$ and Corollary~\ref{cor:BDK} we get
$$
\int_{|y|\leq 2c_1 t^{1/\alpha}} \tilde{p}(t,x,y)\,\d{y}
\leq 
\frac1{h_\delta(2c_1t^{1/\alpha})}\int_{|y|\leq 2c_1 t^{1/\alpha}} \tilde{p}(t,x,y)h_\delta(y)\,\d{y}\leq \frac{h_\delta(x)}{h_\delta(2c_1t^{1/\alpha})}\,.
$$
Finally, note that $h_\delta(2c_1t^{1/\alpha})\approx h_\delta(t^{1/\alpha})$ for $t\in(0,T]$.
\end{proof}

In what follows, for $T>0$ we set
$$R_0:=\max\{1, V_\delta^{-1}(1/(2T))\}\,.$$
We shall also use $\nu(x)$
defined in Corollary~\ref{thm:3}.

\begin{lemma}\label{lem:|x|-small_|y|-large}
Let $T>0$. There exists a constant $c$ such that for all 
$t\in(0,T]$,
$R\in [R_0,2R_0]$ and
$0<|x|\leq R$, $|y|\geq R+1$,
$$
\tilde{p}(t,x,y)\leq c t\nu(x-y)H(t,x)\,.
$$
\end{lemma}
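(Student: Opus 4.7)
My plan is to start from the localised Duhamel bound \eqref{ineq:tp-cut_ball}, which expresses $\tilde{p}(t,x,y)$ as $2p(t,x,y)$ plus twice an integral over $[0,t]\times B_t$ of $\tilde{p}(t-s,x,z)V_\delta(z)p(s,z,y)$, where $B_t:=\{z\in\Rd\colon |z|\leq V_\delta^{-1}(1/(2t))\}$. Since $t\leq T$ and $V_\delta$ is radial decreasing by Lemma~\ref{lem:V-decreasing}, one has $B_t\subseteq\{|z|\leq R_0\}$. The free-kernel term is handled immediately: the local-in-time sharp estimate for $p$ recalled after Theorem~\ref{thm:MAIN}, together with the formula for $\nu$ in Corollary~\ref{thm:3}, gives $p(t,x,y)\leq c\,t\,\nu(x-y)$ on $(0,T]\times\Rd\times\Rd$, and since $H(t,x)\geq 1$ this already has the required form.

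For the integral term, the key pointwise step will be to show that $p(s,z,y)\leq c\,s\,\nu(x-y)$ for all $0<s\leq t$ and $z\in B_t$. Under the standing hypotheses, $|x|\leq R\leq 2R_0$, $|z|\leq R_0$ and $|y|\geq R+1$ yield $|x-y|\geq 1$, $|z-y|\geq 1$, and $\bigl||x-y|-|z-y|\bigr|\leq |x-z|\leq 3R_0$. The profile of $\nu$, namely $c_{d,\alpha}\,K_{(d+\alpha)/2}(r)/r^{(d+\alpha)/2}$, is decreasing and exponentially decaying for $r\geq 1$, so the ratio $\nu(z-y)/\nu(x-y)$ is bounded by a constant depending only on $d,\alpha,\delta,T$ (through $R_0$). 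Combined with the same sharp estimate $p(s,z,y)\leq c\,s\,\nu(z-y)$ this produces the desired comparison.

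Plugging this into the integral, factoring $\nu(x-y)$ out and using $s\leq t$, the whole task reduces to
$$
\int_0^t\!\!\int_{\Rd}\tilde{p}(t-s,x,z)V_\delta(z)\,\d{z}\d{s}\leq c\,H(t,x).
$$
This last bound is the main obstacle, because a direct attempt to re-introduce $p(s,z,y)$ here would be circular. My way around it is to integrate Duhamel's formula \eqref{eq:Duh} in the variable $y$; since $\int_{\Rd}p(s,z,y)\,\d{y}=e^{-s}$, one obtains
$$
\int_{\Rd}\tilde{p}(t,x,y)\,\d{y}=e^{-t}+\int_0^t\!\!\int_{\Rd}\tilde{p}(t-s,x,z)V_\delta(z)\,e^{-s}\,\d{z}\d{s}.
$$
Proposition~\ref{prop:tp-mass_upper} bounds the left-hand side by $cH(t,x)$, and $e^{-s}\geq e^{-T}$ on $[0,T]$ disposes of the exponential factor. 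Assembling all three pieces yields $\tilde{p}(t,x,y)\leq C\,t\,\nu(x-y)\,H(t,x)$ with $C=C(T,\alpha,\delta,d)$, as claimed.
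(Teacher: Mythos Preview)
Your proof is correct and follows essentially the same approach as the paper's: start from \eqref{ineq:tp-cut_ball}, use $p(s,z,y)\leq c\,s\,\nu(z-y)\leq c\,s\,\nu(x-y)$ on the integration region (the paper cites \eqref{ineq:nu-shift-2} rather than arguing directly with the Bessel profile, but the content is the same), and then control $\int_0^t\!\int_{\Rd}\tilde{p}(t-s,x,z)V_\delta(z)\,\d z\d s$ by integrating Duhamel in $y$ and invoking Proposition~\ref{prop:tp-mass_upper}. The paper compresses your last step into the single line $\int_0^t\!\int_{\Rd}\tilde{p}(t-s,x,z)V_\delta(z)\,\d z\d s\leq \int_{\Rd}\tilde{p}(t,x,y)\,\d y$, which hides exactly the $e^{-s}\geq e^{-T}$ observation you made explicit.
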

\begin{proof}
Since $|x|\leq 2R_0$, $|y|\geq R_0+1$,
using \eqref{approx:p}
and \eqref{ineq:nu-shift-2}
we get
$p(s,z,y)\leq c s \nu(x-y)$
for all $s\in (0,T]$ and $|z|\leq R_0$.
Thus, by
\eqref{ineq:tp-cut_ball} we have
\begin{align*}
\tilde{p}(t,x,y)
&\leq 2p(t,x,y)+2\int_0^t\int_{|z|\leq R_0}\tilde{p}(t-s,x,z) V_\delta(z) p(s,z,y)\,\d{z}\d{s}\\
&\leq ct\nu(x-y)+ct\nu(x-y)
\int_0^t\int_{|z|\leq R_0}\tilde{p}(t-s,x,z) V_\delta(z)\,\d{z}\d{s}\,.
\end{align*}
Furthermore, by \eqref{eq:Duh} and \eqref{ineq:tp-mass_upper} we have
$$
\int_0^t\int_{\Rd}\tilde{p}(t-s,x,z) V_\delta(z)\,\d{z}\d{s}
\leq \int_{\Rd} \tilde{p}(t,x,y)\,\d{y}
\leq c H(t,x) \,.
$$
\end{proof}

\begin{proposition}\label{prop:|x-y|-large}
Let $T>0$. There exists a constant $c$ such that for all $t\in (0,T]$ and $|x|\vee |y| \geq 3R_0$,
$$
\tilde{p}(t,x,y)\leq c t\nu(x-y)H(t,x)H(t,y)\,.
$$
\end{proposition}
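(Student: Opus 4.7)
The plan is to use the symmetry $\tilde{p}(t,x,y) = \tilde{p}(t,y,x)$ to reduce to the case $|y| \geq 3R_0$ and then split on the size of $|x|$. A useful preliminary is that $H(t,w) \approx 1$ whenever $|w| \geq R_0$ and $t \in (0,T]$: by Lemma~\ref{lem:h-expl} the profile $h_\delta$ is decreasing (so bounded above on $\{|w| \geq R_0\}$) and the denominator $h_\delta(t^{1/\alpha}) \geq h_\delta(T^{1/\alpha})$ is bounded below. In particular $H(t,y) \approx 1$, and in the subcase $|x| \geq R_0$ also $H(t,x) \approx 1$; more crudely, one always has $H(t,x)H(t,y) \geq 1$, which means any upper bound we derive without one of these factors immediately upgrades to the desired form.

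In the case $|x| \leq 2R_0$ I would invoke Lemma~\ref{lem:|x|-small_|y|-large} with the admissible choice $R = 2R_0$; the hypothesis $|y| \geq R + 1 = 2R_0 + 1$ follows from $|y| \geq 3R_0$ together with $R_0 \geq 1$. The lemma yields $\tilde{p}(t,x,y) \leq c\,t\nu(x-y)H(t,x)$, and the missing $H(t,y) \geq 1$ factor is absorbed for free.

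In the complementary case $|x| > 2R_0$ one has $|x| \geq R_0 + 1$ as well, so $H(t,x)H(t,y) \approx 1$ and it suffices to establish $\tilde{p}(t,x,y) \leq c\,t\nu(x-y)$. I would feed this task into \eqref{ineq:tp-cut_ball}. The ``free'' term $2p(t,x,y)$ is controlled by the universal estimate $p(t,x,y) \leq c\,t\nu(x-y)$ read off from \eqref{approx:p}. On the integral side the region is $|z| \leq V_\delta^{-1}(1/(2t)) \leq R_0$, and since $|x| \geq R_0 + 1$, symmetry of $\tilde{p}$ combined with Lemma~\ref{lem:|x|-small_|y|-large} applied to $\tilde{p}(t-s,z,x)$ with $R = R_0$ delivers $\tilde{p}(t-s,x,z) \leq c(t-s)\nu(x-z)H(t-s,z)$. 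Simultaneously $|z-y| \geq |y| - |z| \geq 2R_0 \geq 1$ and \eqref{approx:p} give $p(s,z,y) \leq c\,s\nu(z-y)$. The integral is then dominated by
\[
c \int_0^t s(t-s)\,\d s \int_{|z|\leq R_0} \nu(x-z)\nu(z-y)\, V_\delta(z)\, H(t-s,z)\,\d z.
\]

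The key new ingredient, and the step I expect to be the main technical obstacle, is a three-point bound $\nu(x-z)\nu(z-y) \leq c\,\nu(x-y)$ valid for $|z| \leq R_0$ and $|x|,|y| \geq 2R_0$, which pulls the target factor $\nu(x-y)$ out of the double integral. I would derive it from the Bessel asymptotics collected in Subsection~\ref{sec:ap-Bessel}: on $\{|x-y| \geq 1\}$ the exponential factor is controlled by the triangle inequality $|x-y| \leq |x-z|+|z-y|$, while the polynomial factor is tamed by $|x-y|/(|x-z||z-y|) \leq 1/|x-z|+1/|z-y| \leq 2/R_0$; on $\{|x-y| < 1\}$ one has $\nu(x-y)$ bounded below, while $\nu(x-z)\nu(z-y)$ stays uniformly bounded above thanks to the exponential decay of $\nu$ on $\{|\cdot| \geq R_0\}$. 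Once this bound is available, $\int_{|z|\leq R_0} V_\delta(z)H(t-s,z)\,\d z$ is uniformly bounded (since $V_\delta(z) \leq c|z|^{-\alpha}$ by Lemma~\ref{lem:lim_V}, $H(t-s,z) \leq c(1+|z|^{-\delta})$, and $\alpha+\delta \leq (d+\alpha)/2 < d$), and $\int_0^t s(t-s)\,\d s \leq T^2 t/6$, so the full integral is dominated by $c\,t\nu(x-y)$, completing the proof.
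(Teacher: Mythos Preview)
Your proof is correct and follows essentially the same route as the paper's: reduce by symmetry to $|y|\ge 3R_0$, handle $|x|\le 2R_0$ via Lemma~\ref{lem:|x|-small_|y|-large} with $R=2R_0$, and for $|x|>2R_0$ combine \eqref{ineq:tp-cut_ball} with Lemma~\ref{lem:|x|-small_|y|-large} (applied to the $\tilde p$-factor) and the three-point inequality $\nu(x-z)\nu(z-y)\le c\,\nu(x-y)$ to extract $t\nu(x-y)$. The only cosmetic differences are that the paper swaps $x\leftrightarrow y$ before invoking \eqref{ineq:tp-cut_ball} (so Lemma~\ref{lem:|x|-small_|y|-large} is applied toward $y$ rather than toward $x$) and cites the three-point bound as \eqref{ineq:nu-prod} instead of re-deriving it. Two small cleanups: in your displayed bound the factor $H(t-s,z)$ still depends on $s$, so the integrals should not be written as separated---replace $H(t-s,z)$ by the uniform majorant $H(T,z)$ (monotonicity of $u\mapsto H(u,z)$) before factoring; and the bound $V_\delta(z)\le c|z|^{-\alpha}$ on $\{|z|\le R_0\}$ needs \eqref{properties} in addition to Lemma~\ref{lem:lim_V}.
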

\begin{proof}
By the symmetry of $\tilde{p}$ we may and do assume that $|y|\geq 3R_0$. The case of $|x|\leq 2R_0$ is covered by Lemma~\ref{lem:|x|-small_|y|-large} with $R=2R_0$. 
From now on we consider
$|x|\geq 2R_0$.
Using the symmetry of $\tilde{p}$ again, 
we get by \eqref{ineq:tp-cut_ball} that
\begin{align*}
\tilde{p}(t,x,y)
&\leq 2p(t,y,x)+2\int_0^t\int_{|z|\leq R_0}\tilde{p}(t-s,z,y)  V_\delta(z) p(s,z,x) \,\d{z}\d{s}\,.
\end{align*}
Furthermore, by Lemma~\ref{lem:|x|-small_|y|-large} with $R=R_0$, we have 
by the monotonicity of $h_\delta$
that for $0<s<t\leq T$ and $|z|\leq R_0$,
$$
\tilde{p}(t-s,z,y)\leq c T \nu(z-y) H(T,z)\,.
$$
Thus, by \eqref{approx:p} and \eqref{ineq:nu-prod} we have
\begin{align*}
\tilde{p}(t,x,y)
&\leq c t\nu(x-y)+ c T^2 \int_0^t\int_{|z|\leq R_0}  \nu(z-y)  H(T,z)  V_\delta(z) \nu(x-z) \,\d{z}\d{s}\\
&\leq c t\nu(x-y)+c T^2\nu(x-y)
\int_0^t\int_{|z|\leq R_0}  H(T,z)  V_\delta(z) \,\d{z}\d{s}\\
&=
c t\nu(x-y)+ct\nu(x-y)\,
T^2 \int_{|z|\leq R_0}  H(T,z)  V_\delta(z) \,\d{z}\,.
\end{align*}
Finally, notice that
$\int_{|z|\leq R_0}  H(T,z)  V_\delta(z) \,\d{z}
<\infty$ since $\alpha<d$,
and $H(t,x)H(t,y)\geq 1$.
\end{proof}

\begin{theorem}\label{thm:tp-upper}
Let $T>0$. There exists a constant $c$ such that for all $t\in(0,T]$ and $x,y\in\Rdz$,
$$
\tilde{p}(t,x,y)\leq c p(t,x,y)H(t,x)H(t,y)\,.
$$
\end{theorem}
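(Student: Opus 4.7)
The plan is to patch together Propositions~\ref{prop:|x-y|-small} and~\ref{prop:|x-y|-large}; the only thing left to do is to convert the prefactor $t\nu(x-y)$ provided by the latter into $p(t,x,y)$ in the complementary spatial regime.

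First I would fix a radius $R_1=R_1(d,\alpha,\delta,T)$ large enough so that simultaneously $R_1\geq 6R_0$ and $T^{(d+\alpha)/\alpha}\nu(R_1)\leq 1$. The second condition can be enforced because $\nu(r)$ is comparable to $K_{(d+\alpha)/2}(r)/r^{(d+\alpha)/2}$, which decays exponentially as $r\to\infty$ by the asymptotics of Subsection~\ref{sec:ap-Bessel}.

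For $|x-y|\leq R_1$ I would apply Proposition~\ref{prop:|x-y|-small} with $R=R_1$ to conclude immediately. For $|x-y|>R_1$, the condition $R_1\geq 6R_0$ gives $|x|+|y|\geq|x-y|>6R_0$, hence $|x|\vee|y|\geq 3R_0$, and Proposition~\ref{prop:|x-y|-large} then yields
$$
\tilde{p}(t,x,y)\leq c\,t\nu(x-y)\,H(t,x)H(t,y).
$$
It remains to compare $t\nu(x-y)$ with $p(t,x,y)$. The local sharp estimate of $p$ recalled right after Theorem~\ref{thm:MAIN} gives $p(t,x,y)\geq c_1\bigl(t^{-d/\alpha}\land c_2\,t\nu(x-y)\bigr)$. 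For $t\in(0,T]$ and $|x-y|>R_1$, the radial monotonicity of $\nu$ and $t\leq T$ imply
$$
t\nu(x-y)\leq T\nu(R_1)\leq T^{-d/\alpha}\leq t^{-d/\alpha}
$$
by the calibration of $R_1$. Consequently $p(t,x,y)\geq c_1 c_2\,t\nu(x-y)$, which yields the desired bound.

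The substantive analytic work has already been absorbed into Propositions~\ref{prop:tp-mass_upper}, \ref{prop:|x-y|-small} and \ref{prop:|x-y|-large} (in particular into the new integral method alluded to in the introduction, by which the exponential tail of the relativistic kernel becomes usable). The present step is pure bookkeeping: choosing $R_1$ so that in the exterior region the exponential decay of $\nu$ dominates the on-diagonal value $t^{-d/\alpha}$, making the two partial bounds consistent. I therefore do not anticipate any serious obstacle in this proof.
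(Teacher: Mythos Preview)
Your proposal is correct and follows essentially the same strategy as the paper: split according to $|x-y|$, invoke Proposition~\ref{prop:|x-y|-small} for small distances and Proposition~\ref{prop:|x-y|-large} for large distances, and convert $t\nu(x-y)$ into $p(t,x,y)$ in the latter regime. The only difference is cosmetic: the paper uses the threshold $6R_0$ and cites the prepackaged comparability \eqref{approx:p-tnu} (namely $p(t,x,y)\approx t\nu(x-y)$ for $t\in(0,T]$ and $|x-y|\geq R$) directly, whereas you re-derive the needed inequality by calibrating a larger $R_1$ so that $t\nu(x-y)$ falls below $t^{-d/\alpha}$; one tiny point is that your calibration should also absorb the constant $c_2$ in the lower bound for $p$, but this is immediate.
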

\begin{proof}
If $|x-y|\leq 6R_0$, the inequality follows from Proposition~\ref{prop:|x-y|-small}. If $|x-y|\geq 6R_0$, then $|x|\vee |y|\geq 3R_0$ and the inequality holds by Proposition~\ref{prop:|x-y|-large} and \eqref{approx:p-tnu}.
\end{proof}

\subsection{Invariance of $h_\delta$}

We show that in fact there is  equality in 
Corollary~\ref{cor:BDK} if $\delta\in (0,\frac{d-\alpha}{2}]$.
Recall that the functions $h_\beta:=h^1_\beta$ and $\bar{h}_\beta:=\bar{h}^1_\beta$ are defined in
Subsection~\ref{subsec:derivation}.

\begin{lemma}\label{lem:N-L}
For all $\beta\in(0,d-\alpha)$ and $t>0$, $x\in \Rdz$,
$$
\int_{\R^d} p(t,x,y) h_{\beta}(y)\,\d{y}
+ \int_0^t \int_{\R^d} p(s,x,y)  \bar{h}_{\beta}(y)\, \d{y}\d{s}
 =h_{\beta}(x)\,.
$$
\end{lemma}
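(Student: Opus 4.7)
The plan is to rewrite both spatial integrals on the left-hand side as one-dimensional time integrals of $p(v,x,0)$ against suitable weights, using the Chapman--Kolmogorov equation for $p$, and then to reassemble them into the definition of $h_\beta(x)$. Throughout, let $\gamma:=(d-\alpha-\beta)/\alpha$; the assumption $\beta\in(0,d-\alpha)$ gives $\gamma>0$, so all integrands appearing below are non-negative and Tonelli's theorem applies without further justification.

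First, using the definition of $h_\beta$ from Subsection~\ref{subsec:derivation} and the semigroup property $\int_{\Rd}p(t,x,y)p(u,y,0)\,\d y=p(t+u,x,0)$, we have
\begin{align*}
\int_{\Rd}p(t,x,y)h_\beta(y)\,\d y
&=\int_0^\infty u^\gamma\int_{\Rd}p(t,x,y)p(u,y,0)\,\d y\,\d u\\
&=\int_0^\infty u^\gamma p(t+u,x,0)\,\d u=\int_t^\infty(v-t)^\gamma p(v,x,0)\,\d v,
\end{align*}
where in the last step we substituted $v=t+u$. The same argument applied to $\bar h_\beta$, whose generating weight is $(u^\gamma)'=\gamma u^{\gamma-1}$, gives
$$
\int_{\Rd}p(s,x,y)\bar h_\beta(y)\,\d y
=\int_0^\infty\gamma u^{\gamma-1}p(s+u,x,0)\,\d u
=\int_s^\infty\gamma(v-s)^{\gamma-1}p(v,x,0)\,\d v.
$$

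Next, I integrate the second identity in $s$ over $(0,t)$ and swap the order of integration (by Tonelli, using non-negativity):
$$
\int_0^t\!\!\int_{\Rd}p(s,x,y)\bar h_\beta(y)\,\d y\,\d s
=\int_0^\infty p(v,x,0)\int_0^{\min(t,v)}\gamma(v-s)^{\gamma-1}\,\d s\,\d v.
$$
The inner integral evaluates to $v^\gamma$ when $v\le t$ and to $v^\gamma-(v-t)^\gamma$ when $v>t$. Adding this to the first identity, the $(v-t)^\gamma$ term over $(t,\infty)$ cancels against its negative counterpart, and what remains is
$$
\int_0^t v^\gamma p(v,x,0)\,\d v+\int_t^\infty v^\gamma p(v,x,0)\,\d v
=\int_0^\infty v^\gamma p(v,x,0)\,\d v=h_\beta(x),
$$
which is the claimed identity.

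The only non-routine point is the application of Fubini/Tonelli, but since $p\ge 0$ and $\gamma>0$ forces $u^\gamma,\gamma u^{\gamma-1}\ge 0$, every integrand is non-negative and the interchanges are free; finiteness of the common value is then inherited from the finiteness of $h_\beta(x)$ for $x\in\Rdz$ (guaranteed by Lemma~\ref{lem:h-expl}). Thus no serious obstacle arises and the whole argument reduces to Chapman--Kolmogorov plus one substitution.
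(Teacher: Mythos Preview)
Your proof is correct and, at the level of ideas, follows the same strategy as the paper: use Chapman--Kolmogorov to collapse the spatial integrals into one-dimensional time integrals of $p(v,x,0)$, then recombine. The execution, however, is genuinely simpler than the paper's. The paper integrates $\int_0^\infty p(s+r,x,0)f'(r)\,\d r$ by parts to obtain $-\int_0^\infty \partial_s p(s+r,x,0)f(r)\,\d r$, then swaps $\int_0^t\d s$ and $\int_0^\infty\d r$; since $\partial_s p$ is signed, this Fubini step is justified via the derivative bound of Lemma~\ref{lam:p-deriv-bound}. You instead substitute $v=s+u$ first and swap the $(s,v)$-integrals directly by Tonelli, using only that $\gamma>0$ makes every integrand non-negative; the inner $s$-integral is then elementary and no estimate on $\partial_t p$ is needed. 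So your route buys you independence from Lemma~\ref{lam:p-deriv-bound} at no cost, while the paper's route is slightly more indirect but makes the ``vanishing of boundary terms'' explicit. One small remark: your final cancellation is legitimate because both $\int_t^\infty (v-t)^\gamma p(v,x,0)\,\d v$ and $\int_t^\infty [v^\gamma-(v-t)^\gamma]p(v,x,0)\,\d v$ are integrals of non-negative functions, so their sum equals $\int_t^\infty v^\gamma p(v,x,0)\,\d v$ in $[0,\infty]$ without any a priori finiteness assumption; the finiteness of $h_\beta(x)$ then propagates back as you say.
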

\begin{proof}
Let $f(t)=t^{\frac{d-\alpha-\beta}{\alpha}}$. 
Then,
\begin{align*}
\int_0^t \int_{\R^d} p(s,x,y)\bar{h}_{\beta}(y)\,\d{y}\d{s}
&= \int_0^t \int_0^{\infty} p(s+r,x,0) f'(r)\,\d{r}\d{s}\\
&=-\int_0^t \int_0^{\infty}\partial_s\, p(s+r,x,0) f(r)\,\d{r}\d{s}\\
&=\int_0^{\infty} [p(r,x,0)-p(t+r,x,0)]f(r)\,\d{r}\\
& = h_{\beta}(x)-\int_{\Rd}p(t,x,y) h_{\beta}(y)\,\d{y}\,.
\end{align*}
The first equality above follows from the Chapman-Kolmogorov equation and the definition of $\bar{h}_{\beta}$.
In the second equality we used integration by parts
and 
that 
$p(s+r,x,0)f(r)\leq p^0(s+r,x,0)f(r)$ vanishes at zero and at infinity as a function of $r$.
In the third equality we used
Fubini's theorem, which was justified because
$\int_0^t \int_0^{\infty} |\frac{\partial}{\partial s}\, p(s+r,x,0) f(r)|\,\d{r}\d{s}<\infty$,
see Lemma~\ref{lam:p-deriv-bound}.
\end{proof}

Here is how Lemma~\ref{lem:N-L} propagates onto $\tilde{p}$.
\begin{proposition}\label{prop:N-L}
For all $\beta\in(0,d-\alpha)$ and $t>0$, $x\in \Rdz$,
\begin{align*}
\int_{\Rd} \tilde{p}(t,x,y) h_{\beta}(y)\,\d{y}
+
\int_0^t \int_{\Rd}  \tilde{p}(s,x,y)
& \bar{h}_{\beta}(y)\, \d{y}\d{s}\\
&=h_{\beta}(x)
+\int_0^t \int_{\Rd} \tilde{p}(s,x,y) V_\delta(y)h_{\beta}(y) \,\d{y}\d{s}\,.
\end{align*}
\end{proposition}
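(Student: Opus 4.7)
The plan is to derive the identity by combining Duhamel's formula \eqref{eq:Duh} for $\tilde{p}$ with Lemma~\ref{lem:N-L} applied to the unperturbed kernel $p$. First, I multiply \eqref{eq:Duh} by $h_\beta(y)$ and integrate over $y\in\Rd$. Using Tonelli (all integrands are non-negative) and then Lemma~\ref{lem:N-L} on the inner $y$-integral $\int_{\Rd} p(t-u,z,y)h_\beta(y)\,\d y$, followed by Lemma~\ref{lem:N-L} applied once more to $\int_{\Rd} p(t,x,y)h_\beta(y)\,\d y$, I would arrive at
\begin{align*}
\int_{\Rd}\tilde{p}(t,x,y) h_\beta(y)\,\d y &= h_\beta(x) - \int_0^t\!\int_{\Rd} p(s,x,y)\bar{h}_\beta(y)\,\d y\,\d s \\
&\quad + \int_0^t\!\int_{\Rd}\tilde{p}(u,x,z) V_\delta(z) h_\beta(z)\,\d z\,\d u - Q,
\end{align*}
where
\begin{align*}
Q := \int_0^t\!\int_{\Rd}\!\int_0^{t-u}\!\int_{\Rd}\tilde{p}(u,x,z) V_\delta(z) p(r,z,y)\bar{h}_\beta(y)\,\d y\,\d r\,\d z\,\d u.
\end{align*}

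Next, I would treat the second term on the left-hand side of the proposition analogously. Substituting \eqref{eq:Duh} inside $\int_0^t\int_{\Rd}\tilde{p}(s,x,y)\bar{h}_\beta(y)\,\d y\,\d s$, swapping the $s$- and $u$-integrations via $\int_0^t\int_0^s \d u\,\d s = \int_0^t\int_u^t \d s\,\d u$ and substituting $r=s-u$, the resulting quadruple integral is exactly $Q$, giving
\begin{align*}
\int_0^t\!\int_{\Rd}\tilde{p}(s,x,y)\bar{h}_\beta(y)\,\d y\,\d s = \int_0^t\!\int_{\Rd} p(s,x,y)\bar{h}_\beta(y)\,\d y\,\d s + Q.
\end{align*}
Adding the two displayed identities, the $\pm Q$ and the two copies of $\int_0^t\int_{\Rd} p(s,x,y)\bar{h}_\beta(y)\,\d y\,\d s$ cancel pairwise, producing the identity of the proposition.

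The main obstacle is ensuring that no $\infty-\infty$ ambiguity occurs in this bookkeeping. Since all integrands are non-negative, Tonelli applies unconditionally, so the content reduces to the finiteness of the individual pieces. The safest route is to first carry out the above manipulations for each iterate $p_n$ in the perturbation series $\tilde{p}=\sum_{n\ge 0} p_n$ by induction: the base case $n=0$ is precisely Lemma~\ref{lem:N-L} (with no $V_\delta$ term), and the inductive step uses the recursion $p_{n+1}(t,x,y)=\int_0^t\!\int_{\Rd} p_n(u,x,z)V_\delta(z)p(t-u,z,y)\,\d z\,\d u$ together with Lemma~\ref{lem:N-L} exactly as above, producing a finite identity at each level. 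Summation in $n$ then yields the statement for $\tilde{p}$. Finiteness of $\int_{\Rd}\tilde{p}(t,x,y)h_\beta(y)\,\d y$ can be independently confirmed from Theorem~\ref{thm:tp-upper} and integrability of $p(t,x,y)H(t,y)h_\beta(y)$ near the origin (where $\delta+\beta<d$ since $\delta\le(d-\alpha)/2$ and $\beta<d-\alpha$) and its exponential decay at infinity.
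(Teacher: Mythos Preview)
Your proposal uses the same two ingredients as the paper --- Duhamel's formula \eqref{eq:Duh} and Lemma~\ref{lem:N-L} --- and arrives at the same identity, so it is correct in spirit. The difference is purely in the order of operations, and the paper's ordering is cleaner: the paper expands both $\int_{\Rd}\tilde p(t,x,y)h_\beta(y)\,\d y$ and $\int_0^t\int_{\Rd}\tilde p(s,x,y)\bar h_\beta(y)\,\d y\,\d s$ by Duhamel, \emph{adds first}, and only then applies Lemma~\ref{lem:N-L} to the combined inner expression
\[
\int_{\Rd}p(t-u,z,y)h_\beta(y)\,\d y \;+\; \int_0^{t-u}\!\!\int_{\Rd}p(r,z,y)\bar h_\beta(y)\,\d y\,\d r \;=\; h_\beta(z).
\]
Since every term is non-negative throughout, the identity holds in $[0,\infty]$ by Tonelli and no finiteness check is needed at all. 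Your route --- applying Lemma~\ref{lem:N-L} in the rearranged subtractive form \emph{before} adding --- is what creates the $\infty-\infty$ concern you then work to repair via induction on the iterates $p_n$. That repair is sound, but it is extra machinery that the paper's ordering avoids entirely; note also that within each inductive step you would again want to add before subtracting, which collapses the argument back to the paper's one-line version. The appeal to Theorem~\ref{thm:tp-upper} for finiteness is likewise unnecessary here (though harmless).
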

\begin{proof}
By Duhamel's formula \eqref{eq:Duh},
\begin{align*}
&\int_{\Rd} \tilde{p}(t,x,y) h_{\beta}(y)\,\d{y}\\
&\qquad =\int_{\Rd} p(t,x,y) h_{\beta}(y)\,\d{y}
 +
\int_0^t\int_{\Rd} \tilde{p}(s,x,z)V_\delta(z)\int_{\Rd} p(t-s,z,y)h_\beta(y)\,\d{y}\,\d{z}\d{s}\,,
\end{align*}
and
\begin{align*}
&\int_0^t \int_{\Rd} \tilde{p}(s,x,y)
\bar{h}_{\beta}(y)\, \d{y}\d{s}\\
&\qquad=
\int_0^t \int_{\Rd} p(s,x,y) \bar{h}_{\beta}(y)\,\d{y}\d{s}
+
\int_0^t \int_{\Rd}
\int_0^s \int_{\Rd} \tilde{p}(u,x,z)V_\delta(z)p(s-u,z,y)\,\d{z}\d{u}\, \bar{h}_{\beta}(y) \,\d{y}\d{s}\\
&\qquad=\int_0^t \int_{\Rd} p(s,x,y) \bar{h}_{\beta}(y)\,\d{y}\d{s}
 +
\int_0^t \int_{\Rd}
 \tilde{p}(u,x,z)V_\delta(z)
\int_0^{t-u} \int_{\Rd} 
 p(r,z,y) \bar{h}_{\beta}(y) \, \d{y}\d{r} \, \d{z}\d{u}  \,.
\end{align*}
It suffices to add
the above expressions and apply Lemma~\ref{lem:N-L}.
\end{proof}

Recall that we assume $\delta\in (0,\frac{d-\alpha}{2}]$.
Given the equality in Proposition~\ref{prop:N-L}, we would like to put $\beta=\delta$ and to cancel the double integrals, because $\bar{h}_\delta(x)=V_\delta(x) h_\delta(x)$, $x\in\Rdz$. That would require the finiteness of these integrals. We also note that the case $\delta=\frac{d-\alpha}{2}$ is more challenging than that of $\delta<\frac{d-\alpha}{2}$. We prove both cases at once by a limiting procedure.

\begin{theorem}\label{thm:tph=h}
For all $t>0$ and $x\in \Rdz$,
\begin{align*}
\int_{\R^d} \tilde{p}(t,x,y) h_{\delta}(y)\,\d{y}
=h_{\delta}(x)\,.
\end{align*}
\end{theorem}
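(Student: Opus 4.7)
The plan is to deduce the identity from Proposition~\ref{prop:N-L} by a limiting argument $\beta\uparrow\delta$. Substituting $\bar h_\beta = V_\beta h_\beta$ in Proposition~\ref{prop:N-L} and rearranging formally yields, for any $\beta\in(0,d-\alpha)$,
\begin{equation*}
\int_{\Rd}\tilde p(t,x,y) h_\beta(y)\,\d y
= h_\beta(x)
+ \int_0^t\!\!\int_{\Rd}\tilde p(s,x,y)\bigl(V_\delta(y)-V_\beta(y)\bigr) h_\beta(y)\,\d y\,\d s, \qquad (\star)
\end{equation*}
so that setting $\beta=\delta$ would kill the correction term and prove the theorem. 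Unfortunately this direct substitution is illegitimate in the critical case $\delta=\tfrac{d-\alpha}{2}$: the two equal double integrals on opposite sides of the equality in Proposition~\ref{prop:N-L} that one wishes to cancel are both infinite, since near the origin $\tilde p(s,x,y)V_\delta(y)h_\delta(y)$ behaves like $s^{\delta/\alpha}|y|^{-(\alpha+2\delta)} = s^{\delta/\alpha}|y|^{-d}$, which is not locally integrable.

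To bypass this, I would first establish $(\star)$ rigorously for every $\beta\in(0,\delta)$. In that range $\alpha+\beta+\delta<\alpha+2\delta\le d$, and absolute convergence of all the integrals in Proposition~\ref{prop:N-L} follows from the upper bound $\tilde p(s,x,y)\le c\,p(s,x,y) H(s,x) H(s,y)$ of Theorem~\ref{thm:tp-upper}, combined with the near-zero asymptotics $h_\beta(y)\approx|y|^{-\beta}$ (Lemma~\ref{lem:h-expl}), $V_\beta(y),V_\delta(y)\approx|y|^{-\alpha}$ (Lemma~\ref{lem:lim_V}), and the exponential decay of the Bessel factors at infinity. Next, I would let $\beta\uparrow\delta$ in $(\star)$. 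On the left-hand side, $h_\beta(y)\to h_\delta(y)$ pointwise and, on $[\beta_0,\delta]$ for any fixed $\beta_0\in(0,\delta)$, one has $h_\beta\le c\,h_\delta$ by the explicit formula in Lemma~\ref{lem:h-expl}; since Corollary~\ref{cor:BDK} gives the integrable majorant $\int\tilde p(t,x,y)\,h_\delta(y)\,\d y\le h_\delta(x)<\infty$, dominated convergence yields convergence to $\int\tilde p(t,x,y)\,h_\delta(y)\,\d y$. The constant term $h_\beta(x)\to h_\delta(x)$ trivially.

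The main obstacle is showing that the correction integral in $(\star)$ vanishes in the limit $\beta\uparrow\delta$. The integrand tends pointwise to zero, but a crude dominant such as $\tilde p\cdot(V_\beta+V_\delta)h_\beta\le c\,\tilde p\cdot|y|^{-\alpha-\beta}$ is useless in the critical case. My plan is to use the decomposition
\begin{equation*}
V_\delta(y)-V_\beta(y) = \bigl[V_\delta(y)-\kappa_\delta|y|^{-\alpha}\bigr] - \bigl[V_\beta(y)-\kappa_\beta|y|^{-\alpha}\bigr] + (\kappa_\delta-\kappa_\beta)|y|^{-\alpha}
\end{equation*}
and treat the three summands separately. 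Lemma~\ref{lem:diff_unif_bound} bounds the first two brackets uniformly in $\beta\in[\beta_0,d-\alpha)$ by $c|y|^{\beta_0-\alpha}$ on $|y|\le 1$, which is strictly less singular than $|y|^{-\alpha}$; this supplies a $\beta$-independent integrable dominant and the corresponding contributions vanish by dominated convergence. For the third summand, the small scalar $\kappa_\delta-\kappa_\beta\to 0$ multiplies $\int_0^t\!\int\tilde p(s,x,y)|y|^{-\alpha}h_\beta(y)\,\d y\,\d s$; a direct estimate based on Theorem~\ref{thm:tp-upper} and $\int p(s,x,y)|y|^{-\gamma}\,\d y\le c(s^{-\gamma/\alpha}\wedge|x|^{-\gamma})$ (via \cite[Lemma~2.3]{MR3933622}) shows that for each fixed $x\in\Rdz$ and $t>0$ this integral remains bounded as $\beta\uparrow\delta$, so the product with $\kappa_\delta-\kappa_\beta$ vanishes in the limit, completing the proof.
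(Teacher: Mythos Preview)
Your overall plan---pass to $(\star)$ for $\beta<\delta$ and let $\beta\uparrow\delta$, splitting the correction via
\[
V_\delta-V_\beta=\bigl[V_\delta-\kappa_\delta|y|^{-\alpha}\bigr]-\bigl[V_\beta-\kappa_\beta|y|^{-\alpha}\bigr]+(\kappa_\delta-\kappa_\beta)|y|^{-\alpha}
\]
---is exactly the paper's. The gap is in your treatment of the third summand in the critical case $\delta=\tfrac{d-\alpha}{2}$. Your claim that
\[
J_\beta:=\int_0^t\!\!\int_{\Rd}\tilde p(s,x,y)\,|y|^{-\alpha}h_\beta(y)\,\d y\,\d s
\]
remains bounded as $\beta\uparrow\delta$ is false there. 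Near $y=0$ the dominant piece of the integrand is (via Theorem~\ref{thm:tp-upper} and \eqref{approx:H_H0}) comparable to $p(s,x,y)\,s^{\delta/\alpha}|y|^{-\delta}\cdot|y|^{-\alpha-\beta}$; with $x\neq 0$ fixed the spatial integral produces a factor $\int_{|y|<\varepsilon}|y|^{-(\delta+\alpha+\beta)}\,\d y\approx (d-\delta-\alpha-\beta)^{-1}=(\delta-\beta)^{-1}$, which blows up. Equivalently, the constant in the cited estimate $\int p^0(s,x,y)|y|^{-\gamma}\,\d y\le c_\gamma(s^{-\gamma/\alpha}\wedge|x|^{-\gamma})$ degenerates as $\gamma=\delta+\alpha+\beta\uparrow d$. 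So the product $(\kappa_\delta-\kappa_\beta)J_\beta$ is an indeterminate $0\cdot\infty$; your argument does not resolve it.

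The paper sidesteps this entirely by a one-sided argument. It keeps the same decomposition, observes that the $(\kappa_\delta-\kappa_\beta)$ term and the $[V_\delta-\kappa_\delta|y|^{-\alpha}]$ term are \emph{nonnegative}, and simply drops them. Only the remaining negative piece (your second bracket) needs to be controlled, and Lemma~\ref{lem:diff_unif_bound} makes that uniformly small by shrinking the neighbourhood of the origin. This yields $\int\tilde p(t,x,y)h_\delta(y)\,\d y\ge h_\delta(x)-\varepsilon$ for every $\varepsilon>0$; the reverse inequality is already Corollary~\ref{cor:BDK}. Your approach can be repaired in the critical case by noting that $\beta\mapsto\kappa_\beta$ has a smooth maximum at $\delta=\tfrac{d-\alpha}{2}$, so $\kappa_\delta-\kappa_\beta=O((\delta-\beta)^2)$ beats the $(\delta-\beta)^{-1}$ blow-up of $J_\beta$; but this requires a quantitative version of the $J_\beta$ estimate, not just its finiteness. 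A minor side issue: your dominant $h_\beta\le c\,h_\delta$ for the left-hand side fails for large $|y|$ (there $h_\beta/h_\delta\approx|y|^{(\delta-\beta)/2}\to\infty$); use instead $h_\beta(y)\le c(|y|^{-\delta}+1)$ together with Remark~\ref{rem:often} and the integrability results quoted from \cite{MR3933622}, as the paper does.
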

\begin{proof}
By the definition of $h_\beta^m$ in Subsection~\ref{subsec:derivation} we have
\begin{align*}
h_\beta(y)\leq h_\beta^0(y)= \frac{2^{\beta}\Gamma\left(\frac{d-\beta}{\alpha}\right)\Gamma\left(\frac{\beta}{2}\right) }{(4\pi)^{d/2} \Gamma\left(\frac{d-\beta}{2}\right)} |y|^{-\beta}\,.
\end{align*}
Thus, by Remark~\ref{rem:often} and \cite[(3.3)]{MR3933622},
for $0<\beta<\delta$
the integrals
on the left hand side of the equality
in Proposition~\ref{prop:N-L} are finite,
hence the one on the right hand side is also finite, and we can rewrite the equality as
\begin{align*}
\int_{\R^d} \tilde{p}(t,x,y) h_{\beta}(y)\,\d{y}
=h_{\beta}(x)
+\int_0^t \int_{\R^d} \tilde{p}(s,x,y)
\Big( V_\delta(y)- V_\beta(y) \Big) h_{\beta}(y) \,\d{y}\d{s}\,.
\end{align*}
Fix $0<\beta_0<\alpha\land \delta$. 
For $\beta\in [\beta_0,\delta]$
we have
$h_\beta(x)\leq  \sup_{\beta\in [\beta_0,\delta]} h_\beta^0(1)  (|x|^{-\delta}+1)$,
therefore by Remark~\ref{rem:often} and
\cite[(3.2) and Proposition~3.2]{MR3933622}
we can apply the dominated convergence theorem to the integral on the left hand side,
$$
\lim_{\beta \uparrow \delta}\int_{\R^d} \tilde{p}(t,x,y) h_{\beta}(y)\,\d{y}=\int_{\R^d} \tilde{p}(t,x,y) h_{\delta}(y)\,\d{y}\,.
$$
Now, we split the integral on the right hand side,
\begin{align*}
\int_0^t \int_{\R^d} \tilde{p}(s,x,y)
\Big( V_\delta(y)- V_\beta(y) \Big) h_{\beta}(y) \,\d{y}\d{s}
= I_1+I_2+I_3+I_4\,,
\end{align*}
where
\begin{align*}
I_1 &=\int_0^t \int_{|y|\geq r_0} \tilde{p}(s,x,y)
\Big( V_\delta(y)- V_\beta(y) \Big) h_{\beta}(y) \,\d{y}\d{s}\,,\\
I_2 &= (\kappa_\delta-\kappa_\beta) \int_0^t \int_{|y|< r_0} \tilde{p}(s,x,y)
|y|^{-\alpha} h_{\beta}(y) \,\d{y}\d{s}\,,\\
I_3&=\int_0^t \int_{|y|<r_0} \tilde{p}(s,x,y)
\Big( V_\delta(y)-\kappa_\delta|y|^{-\alpha}\Big) h_{\beta}(y) \,\d{y}\d{s}\,,  \\
I_4 &= -\int_0^t \int_{|y|<r_0} \tilde{p}(s,x,y)
\Big( V_\beta(y) - \kappa_\beta |y|^{-\alpha} \Big) h_{\beta}(y) \,\d{y}\d{s}\,,
\end{align*}
and the value of $r_0\in (0,1]$ will be specified later.
Clearly, $I_2\geq 0$ and $I_3\geq 0$.
All the following inequalities will be uniform
in $\beta \in [\beta_0,\delta]$.
By Lemma~\ref{lem:diff_unif_bound}
 for all $0<|y|\leq 1$,
\begin{align*}
| V_\beta(y) - \kappa_\beta |y|^{-\alpha}| h_{\beta}(y)
\leq 
c |y|^{\beta_0-\alpha} \sup_{\beta\in [\beta_0,\delta]} h_\beta(1) |y|^{-\delta}
= c' |y|^{\beta_0-\delta-\alpha}\,.
\end{align*}
Fix $\varepsilon>0$.
Since 
by \cite[(3.3)]{MR3933622} with $\beta=\delta-\beta_0$
the latter expression is integrable against 
$\tilde{p}^{\,0}_{V_\delta^{0}}(s,x,y)\,\d{y}\d{s}$ on $(0,t]\times\Rd$,
by Remark~\ref{rem:often} there is $r_0\in (0,1]$ such that
$-I_4\leq \varepsilon$.
Now, for that choice of $r_0\in (0,1]$ and all
$|y|\geq r_0$,
$$
| V_\delta(y)- V_\beta(y)| h_{\beta}(y)
\leq \sup_{\beta\in [\beta_0, \delta]} \Big( V_\delta(r_0)+ V_\beta(r_0)\Big) h_{\beta}(r_0)
<\infty\,.
$$
Combined with \eqref{ineq:tp-mass_upper} it justifies the usage of the dominated convergence theorem, and we get
$\lim_{\beta \uparrow \delta} I_1=0$.
Finally, we have
\begin{align*}
\int_{\R^d} \tilde{p}(t,x,y) h_{\delta}(y)\,\d{y}
\geq h_{\delta}(x) - \varepsilon\,.
\end{align*}
Since $\varepsilon$ was arbitrary, and in view of
Corollary~\ref{cor:BDK}, we obtain the desired equality.
\end{proof}

\subsection{Lower bound}\label{subsec:lower_bound}

We adapt methods developed in \cite[Section~4.2]{MR3933622}. Let 
$$
\rho(t,x,y):=\frac{\tilde{p}(t,x,y)}{H(t,x)H(t,y)}
\qquad \mbox{and}\qquad \mu_t(\d{y}):=[H(t,y)]^2\d{y}\,.
$$
Using $\int_{\Rd} \tilde{p}(t,x,y) \d{y}\geq  \int_{\Rd}p(t,x,y) \d{y} \geq e^{-t}$ and Theorem~\ref{thm:tph=h} we get for all $t>0$, $x\in\Rdz$,
\begin{align}
&\int_{\Rd} \rho(t,x,z)\,\mu_t(\d{z})
= \frac1{H(t,x)} \int_{\Rd} \tilde{p}(t,x,y) H(t,y)\d{y} \nonumber \\
&= \frac1{H(t,x)}\left( \int_{\Rd}\tilde{p}(t,x,y)\d{y} + \frac1{h_\delta(t^{1/\alpha})} \int_{\Rd}\tilde{p}(t,x,y) h_\delta (y) \d{y} \right) \nonumber \\
&\geq \frac1{H(t,x)}\left(e^{-t}+\frac{h_\delta (x)}{h_\delta(t^{1/\alpha})}\right) \geq e^{-t}\,. \label{ineq:rho-1}
\end{align}
By Chapman-Kolmogorov equation \eqref{eq:C-K} and \eqref{approx:H_H0}, for all $t\in(0,T]$ and $x,y\in\Rdz$,
\begin{align}\label{ineq:rho-C-K}
\rho(2t,x,y) &\geq c \int_{\Rd}\rho(t,x,z)\rho(t,z,y)\, \mu_t(\d{z})\,. 
\end{align}

\begin{lemma}\label{lem:rho-ring_mass}
Let $T>0$. There are $r\in(0,1)$, $R\geq 2$ such that for all $t\in(0,T]$ and $0<|x|\leq (4t)^{1/\alpha}$,
$$
\int_{r\leq \frac{|z|}{t^{1/\alpha}}\leq R} \rho(t,x,z)\,\mu_t(\d{z}) \geq \frac{e^{-T}}{2} \,.
$$
\end{lemma}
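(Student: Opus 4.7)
The plan is to use \eqref{ineq:rho-1} together with $t\leq T$ to observe that the total $\mu_t$-mass of $\rho(t,x,\cdot)$ is already at least $e^{-t}\geq e^{-T}$, so it will suffice to bound the complementary $\mu_t$-mass on $\{|z|<rt^{1/\alpha}\}\cup\{|z|>Rt^{1/\alpha}\}$ by $e^{-T}/2$. The first reduction is to invoke Theorem~\ref{thm:tp-upper} together with \eqref{approx:H_H0} to obtain $\rho(t,x,z)\leq c_0\, p(t,x,z)$ and $H(t,z)^2\leq c_1(1+t^{2\delta/\alpha}|z|^{-2\delta})$ uniformly on $(0,T]\times\Rdz\times\Rdz$. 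Both tails then reduce to estimating $\int p(t,x,z)\,(1+t^{2\delta/\alpha}|z|^{-2\delta})\,\d z$ on the respective region.

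For the inner tail I would use $p\leq p^0$ together with the on-diagonal bound $p^0(t,x,z)\leq c\,t^{-d/\alpha}$, and integrate the factor $1+t^{2\delta/\alpha}|z|^{-2\delta}$ over the ball of radius $rt^{1/\alpha}$. The hypothesis $\delta\leq (d-\alpha)/2$ guarantees $2\delta<d$, so the singularity is integrable and the $t$-scaling cancels entirely, producing a bound of order $r^d+r^{d-2\delta}$ that can be made arbitrarily small by taking $r\to 0^+$.

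For the outer tail I would exploit the constraint $|x|\leq (4t)^{1/\alpha}$: picking $R\geq 2\cdot 4^{1/\alpha}$ forces $|z-x|\geq|z|/2\geq Rt^{1/\alpha}/2$ throughout the tail, while $H(t,z)^2\leq c_1(1+R^{-2\delta})\leq 2c_1$ stays bounded in that regime. Combining $p\leq p^0$ with the off-diagonal bound $p^0(t,x,z)\leq c\,t|x-z|^{-d-\alpha}$ and the substitution $w=z-x$ then yields a bound of order $R^{-\alpha}$, which tends to $0$ as $R\to\infty$.

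To finish, I would first choose $r\in(0,1)$ so that the inner contribution is at most $e^{-T}/4$, and then $R\geq 2\cdot 4^{1/\alpha}$ so that the outer contribution is at most $e^{-T}/4$; the complementary mass is then at most $e^{-T}/2$, as required. No genuine obstacle is anticipated; the only delicate point is matching the two growth regimes of $H(t,z)^2$ to the appropriate estimate on $p$ — namely, the on-diagonal bound for the inner integral, where the weight $|z|^{-2\delta}$ is singular but integrable, and the off-diagonal bound for the outer integral, where the weight is tame but the kernel must decay fast enough. The restriction $\delta\leq (d-\alpha)/2$ is used exactly where it matters: it makes the singular weight $|z|^{-2\delta}$ integrable at the origin.
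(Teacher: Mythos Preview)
Your argument is correct and follows essentially the same strategy as the paper: bound $\rho$ by $c\,p^0$ and $H$ by $c\,H^0$ via Theorem~\ref{thm:tp-upper} and \eqref{approx:H_H0}, then show the complementary $\mu_t$-mass is small and conclude from \eqref{ineq:rho-1}. The paper outsources the tail computation to \cite[Section~4.2]{MR3933622} by invoking the scaling of $p^0$, whereas you carry it out explicitly via the on-diagonal bound on the inner region and the off-diagonal bound on the outer region; both routes arrive at the same $r^{d-2\delta}$ and $R^{-\alpha}$ controls.
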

\begin{proof}
By Theorem~\ref{thm:tp-upper}
we have $\rho(t,x,z)\leq c p(t,x,z)\leq c p^0(t,x,z)$, and $H(t,z)\leq c H^0(t,z)$.
Using the scaling of $p^0$ we get from  
\cite[Section~4.2]{MR3933622} that the integral of $c^3 p^0$ over $\{|z|\leq rt^{1/\alpha}\}\cup \{|z|\geq Rt^{1/\alpha}\}$
does not exceed $e^{-T}/2$. Therefore,
the result follows from \eqref{ineq:rho-1}.
\end{proof}

\begin{theorem}\label{thm:tp-lower}
Let $T>0$. There exists a constant $c$ such that for all $t\in(0,T]$ and $x,y\in\Rdz$,
$$
\tilde{p}(t,x,y)\geq c p(t,x,y)H(t,x)H(t,y)\,.
$$
\end{theorem}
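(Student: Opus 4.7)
The plan is to establish the equivalent lower bound $\rho(t,x,y) \geq c\, p(t,x,y)$ on $(0,T] \times \Rdz \times \Rdz$ by combining the Chapman--Kolmogorov-type inequality \eqref{ineq:rho-C-K}, the ring mass estimate in Lemma~\ref{lem:rho-ring_mass}, and the trivial bound $\tilde{p} \geq p$ (valid because $V_\delta \geq 0$ makes every summand of the perturbation series non-negative). The key observation is that on the annular set $A(t) := \{z \in \Rdz : r t^{1/\alpha} \leq |z| \leq R t^{1/\alpha}\}$ from Lemma~\ref{lem:rho-ring_mass}, the function $H(t,\cdot)$ is comparable to $1$, so $\rho(t, z_1, z_2) \approx \tilde{p}(t, z_1, z_2) \geq p(t, z_1, z_2) \geq c\, t^{-d/\alpha}$ for $z_1, z_2 \in A(t)$ (the last inequality being the near-diagonal estimate of $p$, since $|z_1 - z_2| \leq 2R t^{1/\alpha}$ is bounded in units of $t^{1/\alpha}$). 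I would also fix $\eta > 0$ small enough that $H(t, z) \leq c$ whenever $|z| \geq \eta t^{1/\alpha}$ and $t \in (0, T]$; such $\eta$ exists because $h_\delta$ is decreasing and $t^{1/\alpha}$ is bounded.

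If $|x| \land |y| \geq \eta t^{1/\alpha}$, both $H$-factors are bounded and the trivial inequality $\tilde{p} \geq p$ yields the claim at once. Otherwise, by the symmetry of $\tilde{p}$ in the spatial variables, I may assume $|x| < \eta t^{1/\alpha}$. In the near-diagonal sub-case $|x - y| \leq c_0 t^{1/\alpha}$ we also have $|y| < (\eta + c_0) t^{1/\alpha}$, so both $x, y$ fall under the hypothesis of Lemma~\ref{lem:rho-ring_mass}, and $p(3t, x, y) \approx t^{-d/\alpha}$. I would iterate \eqref{ineq:rho-C-K} twice to obtain
\begin{align*}
\rho(3t, x, y) \geq c^2 \int_{A(t)} \int_{A(t)} \rho(t, x, z_1)\, \rho(t, z_1, z_2)\, \rho(t, z_2, y)\, \mu_t(\d z_1)\, \mu_t(\d z_2),
\end{align*}
then use the baseline $\rho(t, z_1, z_2) \geq c\, t^{-d/\alpha}$ on $A(t) \times A(t)$ together with two applications of Lemma~\ref{lem:rho-ring_mass} (to $x$ and, via the symmetry $\rho(t, z_2, y) = \rho(t, y, z_2)$, to $y$) to conclude $\rho(3t, x, y) \geq c\, t^{-d/\alpha} \approx p(3t, x, y)$. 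Rescaling $t \mapsto t/3$ extends this to $(0, T]$.

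In the off-diagonal sub-case $|x - y| > c_0 t^{1/\alpha}$, chosen with $c_0 > \eta$, we have $|y| \geq (c_0 - \eta) t^{1/\alpha}$, so $H(t, y) \leq c$. A single application of \eqref{ineq:rho-C-K} restricted to $z \in A(t)$ gives $\rho(t, z, y) \approx \tilde{p}(t, z, y) \geq p(t, z, y)$; since the shift $z - x$ has norm at most $(R + \eta) t^{1/\alpha}$, which is bounded on $(0, T]$, combining the off-diagonal asymptotic $p(t, \cdot, y) \approx t\, \nu(\cdot - y)$ with the shift estimate \eqref{ineq:nu-shift-2} for $\nu$ yields $p(t, z, y) \geq c\, p(2t, x, y)$ uniformly for $z \in A(t)$. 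Lemma~\ref{lem:rho-ring_mass} then produces $\rho(2t, x, y) \geq c\, p(2t, x, y)$, and rescaling $t \mapsto t/2$ finishes the argument. The hard part will be this last comparison: one must track the exponential decay of $K_{(d+\alpha)/2}$ hidden inside $\nu$ uniformly for $z \in A(t)$, controlled by the shift inequality \eqref{ineq:nu-shift-2}.
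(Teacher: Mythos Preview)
Your overall strategy matches the paper's: reduce to $\rho(t,x,y)\geq c\,p(t,x,y)$, handle $|x|,|y|\gtrsim t^{1/\alpha}$ trivially via $\tilde{p}\geq p$, and otherwise feed the annulus $A(t)$ into \eqref{ineq:rho-C-K} together with Lemma~\ref{lem:rho-ring_mass}. The paper decomposes by the sizes of $|x|$ and $|y|$ rather than by $|x-y|$, and bootstraps: first $|x|$ small, $|y|$ large (one application of \eqref{ineq:rho-C-K}), then $|x|,|y|$ both small (one more application, using the previous step on $\rho(t,z,y)$). Your double iteration in the near-diagonal sub-case is an equivalent alternative and works, provided you pick $\eta+c_0\leq 4^{1/\alpha}$ so that Lemma~\ref{lem:rho-ring_mass} applies to $y$ as well.

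There is, however, a genuine gap in your off-diagonal sub-case. You invoke the asymptotic $p(t,\cdot,y)\approx t\,\nu(\cdot-y)$ and then the $\nu$-shift estimate \eqref{ineq:nu-shift-2}, but neither is available in the regime $c_0\,t^{1/\alpha}<|x-y|<1$: the asymptotic \eqref{approx:p-tnu} needs $|z-y|$ bounded below by a fixed positive constant, and \eqref{ineq:nu-shift-2} needs the long leg to have length at least $1$. Since $t$ can be arbitrarily small, $|x-y|$ can sit well below $1$ while still exceeding $c_0\,t^{1/\alpha}$, and then your chain of inequalities breaks. The repair is immediate and is exactly what the paper does: use the $p$-shift estimate \eqref{ineq:p-shift} directly, which gives $p(t,z,y)\geq c\,p(t,x,y)$ whenever $|z-x|\leq a\,t^{1/\alpha}$, with no restriction on $|x-y|$; then $p(t,x,y)\geq c\,p(2t,x,y)$ by \eqref{approx:p}. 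There is no need to pass through $\nu$ or to track the exponential tail of $K_{(d+\alpha)/2}$ separately---that is already absorbed into \eqref{ineq:p-shift}.
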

\begin{proof}
If $|x|,|y|\geq t^{1/\alpha}$ we have $H(t,x)H(t,y)\leq 4$, so 
\begin{align}\label{ineq:st0}
\rho(t,x,y)\geq \frac14 \tilde{p}(t,x,y)\geq \frac14 p(t,x,y)\,.
\end{align}
Now, let $r$ and $R$ be taken from Lemma~\ref{lem:rho-ring_mass}, which will be used twice below.
We first assume that
$0<|x|\leq (4t)^{1/\alpha}$ and
$|y|\geq r (2t)^{1/\alpha}$.
Then by \eqref{ineq:rho-C-K}, \eqref{approx:H_H0}, \eqref{ineq:p-shift} and \eqref{approx:p} we get for all $t\in (0,T]$,
\begin{align*}
\rho(2t,x,y) &\geq c \int_{r\leq \frac{|z|}{t^{1/\alpha}}\leq R}\rho(t,x,z)\rho(t,z,y)\, \mu_t(\d{z}) 
\geq c  \int_{r\leq \frac{|z|}{t^{1/\alpha}}\leq R}\rho(t,x,z) \frac{p(t,z,y)}{[H^0(1,r)]^2} \,\mu_t(\d{z}) \nonumber \\
&\geq \frac{c}{[H^0(1,r)]^2}\left( \int_{r\leq \frac{|z|}{t^{1/\alpha}}\leq R}\rho(t,x,z) \,\mu_t(\d{z})\right) p(t,x,y) \geq c p(t,x,y) \geq c p(2t,x,y)\,.
\end{align*}
In particular,
for all $0<|x|\leq (2t)^{1/\alpha}$, $|y|\geq rt^{1/\alpha}$ and $t\in (0,T]$,
\begin{align}\label{ineq:st1}
\rho(t,x,y) &\geq c p(t,x,y)\,.
\end{align}
Now, we assume that $0<|x|, |y| \leq (2t)^{1/\alpha}$.
Then by \eqref{ineq:rho-C-K}, the symmetry and \eqref{ineq:st1} applied to $\rho(t,y,z)$,
and again \eqref{ineq:p-shift} and \eqref{approx:p},
we get for all $t\in (0,T]$,
\begin{align*}
\rho(2t,x,y) &\geq c \int_{r\leq \frac{|z|}{t^{1/\alpha}}\leq R}\rho(t,x,z)\rho(t,z,y)\, \mu_t(\d{z}) 
\geq c \int_{r\leq \frac{|z|}{t^{1/\alpha}}\leq R}\rho(t,x,z)p(t,z,y)\, \mu_t(\d{z})\\
&\geq c\left( \int_{r\leq \frac{|z|}{t^{1/\alpha}}\leq R}\rho(t,x,z)\, \mu_t(\d{z})\right) p(t,x,y)
\geq c p(t,x,y) \geq c p(2t,x,y)\,.
\end{align*}
In particular, for all $0<|x|, |y|\leq t^{1/\alpha}$
and $t\in (0,T]$,
\begin{align}\label{ineq:st2}
\rho(t,x,y) \geq c p(t,x,y)\,.
\end{align}
Note that \eqref{ineq:st1} covers the case
$0<|x|\leq t^{1/\alpha}$, $|y|\geq t^{1/\alpha}$.
Thus, due to the symmetry the inequalities \eqref{ineq:st0}, \eqref{ineq:st1} and
\eqref{ineq:st2} cover all cases.
\end{proof}

\subsection{Theorem~\ref{thm:MAIN} for $\delta \in (0,(d-\alpha)/2]$}\label{subsec:thm1-proof}

\begin{theorem}\label{thm:main-1}
Let $\alpha\in (0,2\land d)$ and $\delta \in (0, \frac{d-\alpha}{2}]$. 
The function $\tilde{p}$ is 
jointly continuous on $(0,\infty)\times\Rdz \times \Rdz$
and for every $T>0$ we have
$$
\tilde{p}(t,x,y)\approx p(t,x,y) H^0(t,x)H^0(t,y)
$$
on $(0,T]\times \Rdz \times \Rdz$.
The comparability constant can be chosen to depend only on $d,\alpha, \delta, T$.
\end{theorem}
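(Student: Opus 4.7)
The two-sided estimate $\tilde{p}(t,x,y) \approx p(t,x,y) H^0(t,x) H^0(t,y)$ on $(0,T] \times \Rdz \times \Rdz$ is essentially an assembly step: combine the upper bound of Theorem~\ref{thm:tp-upper} with the lower bound of Theorem~\ref{thm:tp-lower} and replace $H$ by $H^0$ using the equivalence \eqref{approx:H_H0}. Tracking the dependence of the three inputs yields a final comparability constant depending only on $d, \alpha, \delta, T$.

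For joint continuity on $(0,\infty) \times \Rdz \times \Rdz$, the plan is to exploit the perturbation series $\tilde{p} = \sum_{n=0}^\infty p_n$. First I would show by induction that each $p_n$ is jointly continuous on $(0,\infty) \times \Rdz \times \Rdz$. The base case $p_0 = p$ is continuous on $(0,\infty) \times \Rd \times \Rd$ directly from the Fourier representation \eqref{def:p-m}. The inductive step passes through dominated convergence in
$$
p_{n+1}(t,x,y) = \int_0^t \int_{\Rd} p_n(s,x,z)\, V_\delta(z)\, p(t-s,z,y)\, dz\, ds,
$$
where pointwise continuity of the integrand in $(t,x,y)$ is inherited from the inductive hypothesis together with the continuity of $p$, and a local dominator on a neighborhood of any target point in $(0,\infty) \times \Rdz \times \Rdz$ is supplied by $p_n \leq \tilde{p} \leq c\, \tilde{p}^{\,0}_{V^0_\delta}$ (Remark~\ref{rem:often}) together with the trivial bound $p \leq p^0$; the required local integrability away from $\{x=0\}\cup\{y=0\}$ is ensured by \cite[Proposition~3.2]{MR3933622}.

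The final step is to promote continuity from the summands to $\tilde{p}$ by establishing locally uniform convergence of the series on $(0,\infty) \times \Rdz \times \Rdz$. On a compact $K$ of the form $[a,b] \times \{r \leq |x| \leq R\} \times \{r \leq |y| \leq R\}$, Theorem~\ref{thm:tp-upper} provides a continuous majorant, so $\tilde{p}$ is bounded on $K$. Partitioning $[0,b]$ into finitely many subintervals on which the Kato-type constant controlled by Corollary~\ref{cor:Kato} (combined with the domination $\tilde{p} \leq c\, \tilde{p}^{\,0}_{V^0_\delta}$) is less than $1$, and iterating, yields geometric decay of $\sup_K \sum_{n \geq N} p_n$ in $N$. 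Thus the partial sums are continuous and converge uniformly on $K$ to $\tilde{p}$, which therefore is continuous on every such $K$ and hence on $(0,\infty) \times \Rdz \times \Rdz$.

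The main obstacle is this final uniform tail estimate: because $V_\delta$ is singular at the origin, no crude supremum bound on the iterates $p_n$ is available near the singular set, and one must couple the Kato-type dominance of $V_\delta - V^0_\delta$ for $\tilde{p}^{\,0}_{V^0_\delta}$ from Subsection~\ref{sec:largest} with the continuous upper bound of Theorem~\ref{thm:tp-upper} to close the geometric argument uniformly on compact subsets of $(0,\infty) \times \Rdz \times \Rdz$.
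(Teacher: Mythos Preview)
Your treatment of the two--sided estimate is exactly the paper's: combine Theorem~\ref{thm:tp-upper} and Theorem~\ref{thm:tp-lower} and pass from $H$ to $H^0$ via~\eqref{approx:H_H0}; the dependence of constants is traced through those results. Nothing to add there.

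For continuity, the paper takes a slightly different route: it appeals directly to Duhamel's formula~\eqref{eq:Duh}, the continuity of $p$, and the integrability supplied by Remark~\ref{rem:often}, following \cite[Lemma~4.10]{MR3933622}. Your plan via the series $\sum p_n$ is also a standard and legitimate strategy, and your inductive step for the continuity of each $p_n$ is fine.

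The gap is in your uniform--tail argument. The iterates $p_n$ are formed with the \emph{full} potential $V_\delta$, and to obtain geometric decay of $\sup_K \sum_{n\geq N} p_n$ by partitioning time you would need a small--time Kato bound of the type
\[
\int_0^\tau\!\!\int_{\Rd} p(s,x,z)\,V_\delta(z)\,p(\tau-s,z,y)\,\d z\,\d s \le \epsilon\, p(\tau,x,y),\qquad \epsilon<1,
\]
for $\tau$ small. This is precisely what fails for a Hardy--type potential: already for $V_\delta^0=\kappa_\delta|z|^{-\alpha}$ the left--hand side scales like $p^0$ itself, so the would--be Kato constant does not tend to $0$ as $\tau\to 0$. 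Corollary~\ref{cor:Kato} gives a Kato bound for the \emph{difference} $V_\delta-V_\delta^0$ with respect to $\tilde p^{\,0}_{V_\delta^0}$, hence it controls the tail of a different series (the perturbation of $\tilde p^{\,0}_{V_\delta^0}$ by $V_\delta-V_\delta^0$), not of $\sum_n p_n$. Coupling it with $\tilde p\le c\,\tilde p^{\,0}_{V_\delta^0}$ does not convert it into a statement about the $p_n$.

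Two ways to close the argument: either reduce to the fractional case by the termwise domination $p_n\le (p^0)_n$ (iterates of $p^0$ by $V_\delta$) and invoke the local uniform convergence already established in \cite[Lemma~4.10]{MR3933622} together with Proposition~\ref{prop:upper_bound}; or drop the series approach and argue, as the paper does, directly from Duhamel~\eqref{eq:Duh} by dominated convergence, using Remark~\ref{rem:often} and the integrability input of \cite[Proposition~3.2]{MR3933622} to produce the local dominators.
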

\begin{proof}
The estimates follow from Theorems~\ref{thm:tp-upper} and~\ref{thm:tp-lower}, see \eqref{approx:H_H0}.
The continuity of $\tilde{p}$ 
essentially follows from
that of $p(t,x,y)$,
the Duhamel's formula
\eqref{eq:Duh}
and integrability properties
similarly to
\cite[Lemma~4.10]{MR3933622}, which are assured
by the inequalities given in Remark~\ref{rem:often}.
We omit details.
\end{proof}

\section{Further analysis and consequences}\label{sec:further}

In the whole section we assume that $\alpha\in(0,2\land d)$.

\subsection{Four transition densities}\label{subsec:4td}

Note that we have two heat kernels $p^0$ and $p:=p^1$
as well as two potentials $V^0_\delta$
and $V_\delta:=V^1_\delta$. 
That amounts to four possible transition densities as Schr{\"o}dinger perturbations.
In \cite{MR3933622}
the authors studied
$\tilde{p}^{\,0}_{V^0_\delta}$.
We have already discussed
$\tilde{p}^{\,0}_{V^1_\delta}$ for $\delta\in(0,d-\alpha)$
in Section~\ref{sec:largest}
and $\tilde{p}^{1}_{V^1_\delta}$
for $\delta\in(0,\frac{d-\alpha}{2}]$
in Section~\ref{sec:our}.
The remaining one is
$\tilde{p}^{1}_{V^0_\delta}$,
that is the one corresponding to the operator
$
-(-\Delta+1)^{\alpha/2}+\kappa_\delta |x|^{-\alpha}
$.

\begin{table}[h]
\begin{center}
\caption{Heat kernels, potentials and Schr{\"o}dinger perturbations.}
\begin{tabular}[h]{ c|c|l|l| m{1cm} }
\cline{2-4}
&\backslashbox{\small{Kernel}}{\small{Potential}} & \makecell{$V^0_\delta$} & \makecell{$V^1_\delta$} & \\ 
\cline{2-4}
\noalign{\vskip\doublerulesep
         \vskip-\arrayrulewidth}
\cline{2-4} 
& & & \\[-1pt]
&  $p^0(t,x,y)$ & \textcolor{Bittersweet}{$\tilde{p}^{\,0}_{V^0_\delta}$} & 
\textcolor{red}{$\tilde{p}^{\,0}_{V^1_\delta}$}  \\  [10pt]
\cline{2-4}
& & & \\[-1pt]
& $p^1(t,x,y)$ & \textcolor{OliveGreen}{$\tilde{p}^{1}_{V^0_\delta}$} & \textcolor{blue}{$\tilde{p}^{1}_{V^1_\delta}$} \\  [10pt]
\cline{2-4}
\end{tabular}
\end{center}
\end{table}

Several
trivial inequalities between transition densities in the table follow from 
$$p^1\leq p^0 \qquad \mbox{and}\qquad 0\leq V_\delta^0\leq V_\delta^1\,.$$
Namely, the function increases if we move to the right or upwards.

\begin{theorem}\label{thm:all_comp}
Let $\delta \in (0,d-\alpha)$.
For every $T>0$, we have on $(0,T]\times \Rdz\times \Rdz$ that
$$
\textcolor{red}{\tilde{p}^{\,0}_{V^1_\delta}(t,x,y)}
\approx
\textcolor{Bittersweet}{\tilde{p}^{\,0}_{V^0_\delta}(t,x,y)}
\approx p^0(t,x,y) H^0(t,x)H^0(t,y) \,,
$$
and
$$
\textcolor{blue}{\tilde{p}^{1}_{V^1_\delta}(t,x,y)}
\approx \textcolor{OliveGreen}{\tilde{p}^{1}_{V^0_\delta}(t,x,y)}
\approx p^1(t,x,y)H^0(t,x)H^0(t,y)
\,.
$$
Each function is jointly continuous on $(0,\infty)\times\Rdz\times\Rdz$.
\end{theorem}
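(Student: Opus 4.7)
The plan is to dispatch the top row of the table at once and then concentrate the effort on the bottom row. For the top row, the first $\approx$ is Proposition~\ref{prop:upper_bound}, while the second $\approx$ follows from \cite[Theorem~1.1]{MR3933622} together with the trivial inequality $\tilde{p}^{\,0}_{V^0_\delta}\le \tilde{p}^{\,0}_{V^1_\delta}$. Joint continuity of $\tilde{p}^{\,0}_{V^0_\delta}$ is already proved in \cite{MR3933622}, and $\tilde{p}^{\,0}_{V^1_\delta}$ inherits continuity via the Duhamel identity \eqref{eq:Duh}, with the absolute convergence of the relevant integrals guaranteed by the two-sided estimates just obtained (cf.\ Remark~\ref{rem:often}).

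For the bottom row I would first extend the upper bound $\tilde{p}^{1}_{V^1_\delta}(t,x,y)\le c\,p^1(t,x,y)H^0(t,x)H^0(t,y)$ from $\delta\in(0,\tfrac{d-\alpha}{2}]$, where it is given by Theorem~\ref{thm:tp-upper}, to all of $(0,d-\alpha)$. For $\delta\in(\tfrac{d-\alpha}{2},d-\alpha)$ put $\delta':=d-\alpha-\delta\in(0,\tfrac{d-\alpha}{2}]$. By Remark~\ref{rem:reflect_beta} we have $V^1_\delta\le V^1_{\delta'}$, hence $\tilde{p}^{1}_{V^1_\delta}\le \tilde{p}^{1}_{V^1_{\delta'}}$; the already known bound for $\delta'$ together with the reflective coincidence $H^0_\delta = H^0_{\delta'}$ built into the piecewise definition of $H^0$ yields the estimate. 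Since $\tilde{p}^{1}_{V^0_\delta}\le \tilde{p}^{1}_{V^1_\delta}$, the upper bounds for both functions in the bottom row are in hand.

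The central step is the lower bound. It is enough to prove $\tilde{p}^{1}_{V^0_\delta}\ge c\,p^1 H^0 H^0$, for then $\tilde{p}^{1}_{V^1_\delta}\ge \tilde{p}^{1}_{V^0_\delta}$ takes care of the other. For $\delta\in(0,\tfrac{d-\alpha}{2}]$ I would apply Lemma~\ref{lem:Kato-signed} to $\tilde{p}^{1}_{V^1_\delta}$ with the non-positive perturbation $q_2:=V^0_\delta-V^1_\delta$; by \eqref{eq:pert-of-pert} the resulting kernel is $\tilde{p}^{1}_{V^0_\delta}$, and the conclusion of the lemma combined with Theorem~\ref{thm:tp-lower} gives $\tilde{p}^{1}_{V^0_\delta}\ge c\,\tilde{p}^{1}_{V^1_\delta}\ge c\,p^1 H^0 H^0$. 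The hypothesis of Lemma~\ref{lem:Kato-signed} reduces to showing
\begin{equation*}
\int_0^\tau\!\!\int_{\Rd}\tilde{p}^{1}_{V^1_\delta}(s,x,z)\bigl(V^1_\delta(z)-V^0_\delta(z)\bigr)\tilde{p}^{1}_{V^1_\delta}(\tau-s,z,y)\,\d{z}\d{s}\le \epsilon\,\tilde{p}^{1}_{V^1_\delta}(\tau,x,y)
\end{equation*}
for some small $\tau>0$ and $\epsilon<1$. Inserting the sharp two-sided bounds from Theorem~\ref{thm:main-1} converts this into an inequality for $p^1$ with $H^0$-factors and the difference $V^1_\delta-V^0_\delta$, which by Lemmas~\ref{lem:diff_est} and~\ref{lem:diff_unif_bound} is of order $|z|^{\beta_0-\alpha}$ (for some $\beta_0\in(0,\alpha)$) on $\{|z|\le 1/2\}$ and bounded outside. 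This is the main obstacle: in contrast with Lemma~\ref{lem:Kato}, the absence of scaling and the exponential decay of $p^1$ prevent a direct 3G-inequality argument. My plan is to combine Lemma~\ref{lem:Kato} to handle the singular core near the origin with the integral method developed in Section~\ref{subsec:upper_bound}, mirroring Propositions~\ref{prop:|x-y|-small} and~\ref{prop:|x-y|-large} (and using Theorem~\ref{thm:tph=h} together with the shift estimates \eqref{ineq:p-shift}, \eqref{ineq:nu-shift-2}) to control the exponential tails when $|x-y|$ is large. For $\delta\in(\tfrac{d-\alpha}{2},d-\alpha)$ no new work is needed: by Remark~\ref{rem:reflect_beta} one has $V^0_\delta=\kappa_\delta|x|^{-\alpha}=\kappa_{\delta'}|x|^{-\alpha}=V^0_{\delta'}$, hence $\tilde{p}^{1}_{V^0_\delta}=\tilde{p}^{1}_{V^0_{\delta'}}$ with $\delta'\in(0,\tfrac{d-\alpha}{2}]$, and the previous step together with $H^0_\delta=H^0_{\delta'}$ closes the argument.

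Joint continuity of each of the four kernels on $(0,\infty)\times\Rdz\times\Rdz$ is then obtained exactly as in the last lines of the proof of Theorem~\ref{thm:main-1} (and as in \cite[Lemma~4.10]{MR3933622}), by invoking the Duhamel formula \eqref{eq:Duh}, the continuity of $p^0$ and $p^1$, and the integrability supplied by the two-sided bounds now established.
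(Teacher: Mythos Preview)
Your proposal is correct and follows essentially the same route as the paper: the top row via Proposition~\ref{prop:upper_bound} and \cite{MR3933622}, the bottom row via Theorem~\ref{thm:main-1}, Lemma~\ref{lem:Kato-signed} with $q_2=V^0_\delta-V^1_\delta$, and the reflection $\delta\leftrightarrow\delta'=d-\alpha-\delta$ from Remark~\ref{rem:reflect_beta}. The paper carries out your ``central step'' (the Kato-type hypothesis for $\tilde{p}^{1}_{V^1_\delta}$) exactly as you outline, splitting into $|x-y|$ small (via Corollary~\ref{cor:Kato} and $p^0\approx p^1$) and $|x|\vee|y|$ large (via Theorem~\ref{thm:tp-upper}, Lemma~\ref{lem:diff_est}, and the $\nu$-shift estimates \eqref{ineq:nu-shift-2}, \eqref{ineq:nu-prod}); note that Theorem~\ref{thm:tph=h} is not needed here, since the sharp bounds of Theorem~\ref{thm:main-1} already supply all the control you require.
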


The main new ingredient in the proof of Theorem~\ref{thm:all_comp} is Corollary~\ref{n-cor:Kato}, see below. It is a counterpart of
Corollary~\ref{cor:Kato} and
its proof is based on similar ideas used in Subsection~\ref{subsec:upper_bound}.

\begin{lemma}\label{n-prop:|x-y|-small}
Let $\delta\in (0,\frac{d-\alpha}{2}]$. For every $T,R>0$ there exists a constant $c$ such that 
for all $t\in (0,T]$ and $x,y\in\Rdz$ satisfying $|x-y|\leq R$,
\begin{align*}
\int_0^t \int_{\Rd} \tilde{p}^{1}_{V_\delta^1}(s,x,z)\Big(V^1_\delta(z) - V^0_\delta(z) \Big)
\tilde{p}^{1}_{V_\delta^1}(t-s,z,y)\,\d{z}\d{s}
\leq c \max\{t,t^{\delta/\alpha}\}\,\, \tilde{p}^{1}_{V_\delta^1}(t,x,y)\,.
\end{align*}
\end{lemma}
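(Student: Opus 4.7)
The plan is to reduce this estimate to its already-established counterpart for the fractional Laplacian, Corollary~\ref{cor:Kato}. The crucial observation is that for $|x-y|\le R$ and $t\in(0,T]$ we are in the ``near-diagonal'' regime where $p^1(t,x,y)\approx p^0(t,x,y)$, so the two sharp two-sided estimates available to us---Theorem~\ref{thm:main-1} for $\tilde p^{1}_{V^1_\delta}$ and \eqref{approx:BGJP} for $\tilde p^{\,0}_{V^0_\delta}$---can be freely interchanged on this set. Outside this set the comparability fails because of the exponential Bessel decay of $p^1$, which is why the hypothesis $|x-y|\le R$ is imposed.

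First, I would apply Remark~\ref{rem:often} to each of the two heat-kernel factors in the integrand, obtaining
\[
\tilde{p}^{1}_{V^1_\delta}(s,x,z)\;\le\; c\,\tilde{p}^{\,0}_{V^0_\delta}(s,x,z), \qquad s\in(0,T],\ x,z\in\Rdz.
\]
Since the potential $q(z):=V^1_\delta(z)-V^0_\delta(z)=V_\delta(z)-\kappa_\delta|z|^{-\alpha}\ge 0$ by Corollary~\ref{cor:diff_nonneg}, this gives
\[
\text{LHS}\;\le\; c^2 \int_0^t\!\!\int_{\Rd} \tilde{p}^{\,0}_{V^0_\delta}(s,x,z)\,q(z)\,\tilde{p}^{\,0}_{V^0_\delta}(t-s,z,y)\,\d z\,\d s.
\]
Corollary~\ref{cor:Kato}, applied with exactly this $q$, then bounds the right-hand side by $c\,\max\{t,t^{\delta/\alpha}\}\,\tilde{p}^{\,0}_{V^0_\delta}(t,x,y)$.

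The last step is to convert $\tilde p^{\,0}_{V^0_\delta}(t,x,y)$ back to $\tilde p^{1}_{V^1_\delta}(t,x,y)$, and this is where the assumption $|x-y|\le R$ finally enters. By the estimate of $p^0$ in \eqref{approx:BGJP} together with \eqref{approx:p-stable}, we have $p^0(t,x,y)\le c\bigl(t^{-d/\alpha}\wedge t/|x-y|^{d+\alpha}\bigr)\le c\,p^1(t,x,y)$ on the region $\{t\in(0,T],\ |x-y|\le R\}$---precisely the inequality used in Proposition~\ref{prop:|x-y|-small}. Combining this with $H\approx H^0$ on $(0,T]\times\Rdz$ and applying \eqref{approx:BGJP} together with Theorem~\ref{thm:main-1} yields
\[
\tilde{p}^{\,0}_{V^0_\delta}(t,x,y)\;\approx\; p^0(t,x,y)H^0(t,x)H^0(t,y)\;\le\; c\, p^1(t,x,y)H^0(t,x)H^0(t,y)\;\approx\; \tilde{p}^{1}_{V^1_\delta}(t,x,y)
\]
on this region. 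Chaining the three inequalities yields the desired bound.

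I do not expect any serious obstacle here, since every ingredient---Remark~\ref{rem:often}, Corollary~\ref{cor:Kato}, and the comparability $p^0\approx p^1$ on $\{|x-y|\le R\}$---has already been established earlier in the paper. The only conceptual point worth flagging is that the restriction $|x-y|\le R$ cannot be dropped: the ratio $p^0/p^1$ blows up as $|x-y|\to\infty$ because of the exponential decay of $p^1$, so the complementary large-$|x-y|$ regime will have to be treated by a separate integral-splitting argument in the spirit of Proposition~\ref{prop:|x-y|-large}.
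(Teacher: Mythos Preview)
Your proposal is correct and follows essentially the same argument as the paper's proof: dominate both factors via Remark~\ref{rem:often}, invoke Corollary~\ref{cor:Kato}, then use $p^0\le c\,p^1$ on $\{|x-y|\le R,\ t\le T\}$ together with the two-sided estimates to convert $\tilde p^{\,0}_{V^0_\delta}$ back to $\tilde p^{1}_{V^1_\delta}$. Your write-up is in fact slightly more explicit than the paper's, which simply says ``exactly like in the proof of Proposition~\ref{prop:|x-y|-small}'' for the last conversion step; your remark that the lower bound from Theorem~\ref{thm:main-1} is what closes the argument is the correct way to read that reference.
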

\begin{proof}
Using Remark~\ref{rem:often} and applying
Corollary~\ref{cor:Kato}
we can bound the left hand side by 
$c \max\{t, t^{\delta /\alpha}\} \tilde{p}^{\,0}_{V_\delta^0}$.
Now, exactly like in the proof of
Proposition~\ref{prop:|x-y|-small}
we get that $\tilde{p}^{\,0}_{V_\delta^0} \leq c \tilde{p}^{1}_{V_\delta^1}$
if $t\in (0,T]$ and $x,y\in\Rdz$ satisfy $|x-y|\leq R$.
\end{proof}

\begin{remark}\label{n-rem:|z|<1/2}
Let $\delta \in (0,\frac{d-\alpha}{2}]$. There exists a constant $c$ such that 
for all $t>0$ and $x,y\in\Rdz$,
\begin{align*}
\int_0^t \int_{|z|\geq 1/2} \tilde{p}^{1}_{V_\delta^1}(s,x,z)\Big(V^1_\delta(z) - V^0_\delta(z) \Big)
\tilde{p}^{1}_{V_\delta^1}(t-s,z,y)\,\d{z}\d{s}
\leq c t \, \tilde{p}^{1}_{V_\delta^1}(t,x,y)\,.
\end{align*}
The inequality simply follows from boundedness of $V^1_\delta - V^0_\delta$ for $|z|\geq 1/2$ and the Chapman-Kolmogorov equation \eqref{eq:C-K} for $\tilde{p}^{1}_{V_\delta^1}$.
\end{remark}

\begin{lemma}\label{n-lem:|x|-small_|y|-large}
Let $\delta\in(0,\frac{d-\alpha}{2}]$ and $0<\delta<\alpha$.
For every $T>0$ there exists a constant $c$ such that for all 
$t\in(0,T]$,
$R\in [1,2]$ and
$0<|x|\leq R$, $|y|\geq R+1$,
\begin{align*}
\int_0^t \int_{|z|\leq 1/2} \tilde{p}^{1}_{V_\delta^1}(s,x,z)\Big(V^1_\delta(z) - V^0_\delta(z) \Big)
\tilde{p}^{1}_{V_\delta^1}(t-s,z,y)\,\d{z}\d{s}
\leq c t^{\delta/\alpha+1} \nu(x-y)H^0(t,x)\,.
\end{align*}
\end{lemma}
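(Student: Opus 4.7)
The plan is to exploit the spatial separation $\{|z|\leq 1/2\}$ versus $\{|y|\geq R+1\}$ in order to factor $\nu(x-y)$ out of the double integral, and then to reduce what remains to an integral against the free kernel $p^{0}$ that is handled by Lemma~\ref{lem:int-aux}, in the spirit of the proof of Lemma~\ref{lem:Kato}.

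First I would peel off the inner kernel. By Theorem~\ref{thm:main-1},
$$
\tilde{p}^{1}_{V_\delta^1}(t-s,z,y)\leq c\,p^{1}(t-s,z,y)H^0(t-s,z)H^0(t-s,y).
$$
Since $|y|\geq R+1\geq 2$ and $t-s\leq T$, the factor $H^0(t-s,y)$ is uniformly bounded. Since $|z-y|\geq 3/2$, \eqref{approx:p} gives $p^{1}(t-s,z,y)\leq c(t-s)\nu(z-y)$. Finally, $\bigl||z-y|-|x-y|\bigr|\leq 5/2$ together with the exponential comparability of $\nu$ encoded in \eqref{ineq:nu-shift-2} yields $\nu(z-y)\leq c\,\nu(x-y)$. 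Combining these and using $H^0(t-s,z)\leq H^0(t,z)$ produces
$$
\tilde{p}^{1}_{V_\delta^1}(t-s,z,y)\leq c\,t\,\nu(x-y)\,H^0(t,z).
$$
For the outer factor I would use Remark~\ref{rem:often} to write $\tilde{p}^{1}_{V_\delta^1}(s,x,z)\leq c\,p^{0}(s,x,z)H^0(s,x)H^0(s,z)$, and Lemma~\ref{lem:diff_est} (applicable because $0<\delta<\alpha<2$) to bound $V_\delta^1(z)-V_\delta^0(z)\leq c\,|z|^{\delta-\alpha}$ on $\{|z|\leq 1/2\}$. Substituting all of this, the lemma is reduced to showing
$$
\int_0^t\int_{|z|\leq 1/2} p^{0}(s,x,z)\,H^0(s,x)H^0(s,z)H^0(t,z)\,|z|^{\delta-\alpha}\,\d{z}\d{s}\leq c\,t^{\delta/\alpha}H^0(t,x).
$$

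To finish, I would expand the triple product $H^0(s,x)H^0(s,z)H^0(t,z)|z|^{\delta-\alpha}$ into a finite sum of monomials $s^{k\delta/\alpha}t^{\ell\delta/\alpha}|x|^{-m\delta}|z|^{-\beta}$ with $\beta\in\{\alpha-\delta,\alpha,\alpha+\delta\}$ and $k,\ell,m\in\{0,1,2\}$, exactly as in the proof of Lemma~\ref{lem:Kato}, and integrate term by term using Lemma~\ref{lem:int-aux}. The admissibility $\gamma\geq\beta-\alpha$ is met in every case because $\delta>0$, and the occasional logarithmic factors arising in the critical case $\gamma=\beta-\alpha$ (which occur for $\beta=\alpha$ and $\beta=\alpha+\delta$) are absorbed into $H^0(t,x)$ via the elementary estimate $\log(1+v)\leq c_\delta(1+v^{\delta/\beta})$.

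The bookkeeping in the last step should be the only real difficulty: we carry three $H^0$-factors (one more than in Lemma~\ref{lem:Kato}, because we use spatial separation rather than a second mirror-kernel factor), so one must verify that the extra $H^0(t,z)$-multiplier does not worsen the target $t^{\delta/\alpha}$-scaling. The assumption $\delta<\alpha$ is essential here, as it guarantees that $|z|^{-\beta}$ is locally integrable for the relevant range $\beta\leq\alpha+\delta$ and that the logarithmic tails fit inside $H^0(t,x)$.
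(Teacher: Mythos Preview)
Your proposal is correct and follows essentially the same route as the paper: bound both $\tilde{p}^{1}_{V_\delta^1}$ factors by $p\,H^0H^0$, use the spatial separation $|z|\leq 1/2$, $|y|\geq R+1$ together with \eqref{approx:p} and \eqref{ineq:nu-shift-2} to extract $t\,\nu(x-y)$ from the $(t-s)$-factor, replace $p^1(s,x,z)$ by $p^0(s,x,z)$, and reduce to the $p^0$-integral already handled in the proof of Lemma~\ref{lem:Kato}. The only cosmetic difference is that the paper keeps $H^0(t-s,z)H^0(t-s,y)$ inside the integral and then invokes the four-factor inequality from Lemma~\ref{lem:Kato} verbatim (bounding $H^0(t,y)$ at the very end), whereas you first bound $H^0(t-s,y)\leq c$ and $H^0(t-s,z)\leq H^0(t,z)$ and then re-expand term by term; both lead to the same $t^{\delta/\alpha}H^0(t,x)$ bound.
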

\begin{proof}
We apply
Theorem~\ref{thm:tp-upper}, \eqref{approx:H_H0}
and
Lemma~\ref{lem:diff_est}
to obtain the following upper bound of the left hand side
\begin{align*}
c \int_0^t \int_{|z|\leq 1/2} p^{1}(s,x,z)H^0(s,x)H^0(s,z)|z|^{\delta-\alpha}\,
p^{1}(t-s,z,y)H^0(t-s,z)H^0(t-s,y)\,\d{z}\d{s}\,.
\end{align*}
Using $p^1(s,x,z)\leq p^0(s,x,z)$
and $p^1(t-s,z,y)\leq c t \nu(x-y)$, see \eqref{approx:p} and \eqref{ineq:nu-shift-2}, we further bound it by
\begin{align*}
&c t \nu(x-y) \int_0^t \int_{|z|\leq 1/2} p^{0}(s,x,z)H^0(s,x)H^0(s,z)H^0(t-s,z)H^0(t-s,y)|z|^{\delta-\alpha}\,\d{z}\d{s}\\
&\leq c t^{\delta/\alpha+1} \nu(x-y)  H^0(t,x) H^0(t,y)\,,
\end{align*}
where the latter follows from the same inequality as
in the proof of Lemma~\ref{lem:Kato}.
We finally notice that $H^0(t,y)$ is bounded under our assumptions.
\end{proof}

\begin{proposition}\label{n-prop:|x-y|-large}
Let $\delta\in(0,\frac{d-\alpha}{2}]\cap (0,\alpha)$.
For every $T>0$ there exists a constant $c$ such that for all $t\in (0,T]$ and $x,y\in\Rdz$ satisfying $|x|\vee |y| \geq 3$,
\begin{align*}
\int_0^t \int_{|z|\leq 1/2} \tilde{p}^{1}_{V_\delta^1}(s,x,z)\Big(V^1_\delta(z) - V^0_\delta(z) \Big)&
\tilde{p}^{1}_{V_\delta^1}(t-s,z,y)\,\d{z}\d{s}\\
&\leq c \max\{t,t^{\delta/\alpha}\} \, t\nu(x-y)H^0(t,x)H^0(t,y)\,.
\end{align*}
\end{proposition}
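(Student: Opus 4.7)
The plan is to mirror the strategy of Proposition~\ref{prop:|x-y|-large}, adapted to the presence of the extra factor $V^1_\delta(z)-V^0_\delta(z)$. Since $\tilde{p}^{1}_{V_\delta^1}$ is symmetric, the integrand is symmetric in $(x,y)$, so I may and do assume without loss of generality that $|y|\geq 3$, and split into two cases according to the size of $|x|$.

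First I would handle the case $0<|x|\leq 2$ as an immediate corollary of Lemma~\ref{n-lem:|x|-small_|y|-large}: applying that lemma with $R=2$ gives the bound $ct^{\delta/\alpha+1}\nu(x-y)H^0(t,x)$. Since $|y|\geq 3$ and $t\in(0,T]$, $H^0(t,y)$ is bounded above and below by positive constants, so this factor can be inserted for free. Moreover, as $\delta<\alpha$ by assumption, the inequality $t^{\delta/\alpha+1}=t^{\delta/\alpha}\cdot t\leq \max\{t,t^{\delta/\alpha}\}\cdot t$ is automatic for $t\in(0,T]$, yielding the claimed bound.

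Next I would treat the case $|x|\geq 2$ (still with $|y|\geq 3$). The plan is to apply the upper estimate of Theorem~\ref{thm:tp-upper} together with \eqref{approx:H_H0} to both copies of $\tilde{p}^{1}_{V_\delta^1}$, obtaining a bound of the form $cp^1(s,x,z)p^1(t-s,z,y)$ times the four factors $H^0(s,x)H^0(s,z)H^0(t-s,z)H^0(t-s,y)$. Since $|z|\leq 1/2$, we have $|x-z|\geq 3/2$ and $|z-y|\geq 5/2$, so \eqref{approx:p} together with \eqref{ineq:nu-shift-2} provides $p^1(s,x,z)\leq cs\,\nu(x-z)$ and $p^1(t-s,z,y)\leq c(t-s)\nu(z-y)$. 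Invoking the key inequality \eqref{ineq:nu-prod} then gives $\nu(x-z)\nu(z-y)\leq c\nu(x-y)$ in this regime. The factors $H^0(s,x)$ and $H^0(t-s,y)$ are bounded by constants (depending on $T$) since $|x|\geq 2$ and $|y|\geq 3$, and the remaining factors satisfy $H^0(s,z)H^0(t-s,z)\leq (H^0(t,z))^2$ by monotonicity in time. Bounding $V^1_\delta(z)-V^0_\delta(z)\leq c|z|^{\delta-\alpha}$ on $|z|\leq 1/2$ via Lemma~\ref{lem:diff_est} (here we use $\delta<\alpha<2$), the problem reduces to estimating
\[
\nu(x-y)\int_0^t s(t-s)\,\d s \int_{|z|\leq 1/2}\bigl(1+t^{2\delta/\alpha}|z|^{-2\delta}\bigr)|z|^{\delta-\alpha}\,\d z.
\]
The $z$-integral converges since $\delta-\alpha>-d$ (as $\alpha<d$) and $-\delta-\alpha>-d$ (using the hypothesis $\delta\leq(d-\alpha)/2$, hence $\delta+\alpha<d$), and it is $\leq c(1+t^{2\delta/\alpha})$, uniformly bounded for $t\in(0,T]$. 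The time integral yields $t^3/6$. Combining, the whole quantity is $\leq ct^2\,\nu(x-y)$ on $(0,T]$, which is dominated by $c\max\{t,t^{\delta/\alpha}\}\cdot t\,\nu(x-y)$ since $\delta\leq\alpha$ implies $t^2\leq \max\{t,t^{\delta/\alpha}\}\cdot t$. Since $H^0(t,x)H^0(t,y)\geq 1$, this yields the claimed estimate.

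The main technical obstacle is the bookkeeping of the four weight factors $H^0$ at different time arguments together with the singular factor $(V^1_\delta-V^0_\delta)(z)|z|^{\text{something}}$ near $z=0$. Both assumptions $\delta<\alpha$ (so that Lemma~\ref{lem:diff_est} gives the sharp singularity $|z|^{\delta-\alpha}$ rather than a logarithmic or $|z|^{2-\alpha}$ bound) and $\delta\leq(d-\alpha)/2$ (to guarantee integrability of $|z|^{-\delta-\alpha}$ against the $H^0(t,z)^2$ weight) are genuinely used here; any relaxation would force a different argument, analogous to the role played by Remark~\ref{rem:reflect_beta} in Section~\ref{sec:our}.
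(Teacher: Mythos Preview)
Your proof is correct and follows essentially the same route as the paper's: the same symmetry reduction, the same appeal to Lemma~\ref{n-lem:|x|-small_|y|-large} for $|x|\leq 2$, and for $|x|\geq 2$ the same chain of Theorem~\ref{thm:tp-upper}, \eqref{approx:p}, \eqref{ineq:nu-prod}, Lemma~\ref{lem:diff_est}, and the final integrability check. Two minor remarks: the invocation of \eqref{ineq:nu-shift-2} is unnecessary since \eqref{approx:p} already gives $p^1\leq ct\nu$, and your time integral actually yields $t^3/6$ rather than $t^2$, but one factor of $t\leq T$ is absorbed into the constant exactly as in the paper.
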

\begin{proof}
By the symmetry of $\tilde{p}^1_{V_\delta^1}$ we may and do assume that $|y|\geq 3$. The case of $|x|\leq 2$ is covered by Lemma~\ref{n-lem:|x|-small_|y|-large} with $R=2$. From now on we consider $|x|\geq 2$.
By Theorem~\ref{thm:tp-upper}, \eqref{approx:H_H0} and \eqref{approx:p}
for 
$0<s<t\leq T$ and
$0<|z|\leq 1/2$ 
we have
\begin{align*}
\tilde{p}^{1}_{V_\delta^1}(s,x,z)&\leq c s \nu(x-z) H^0(s,z) \,, \\
\tilde{p}^{1}_{V_\delta^1}(t-s,z,y)&\leq  c (t-s) \nu(z-y) H^0(t-s,z)\,,
\end{align*}
which together with Lemma~\ref{lem:diff_est} give us the following bound on the left hand side
\begin{align*}
c t^2 \int_0^t \int_{|z|\leq 1/2} \nu(x-z) H^0(s,z) |z|^{\delta-\alpha}\, \nu(z-y) H^0(t-s,z)
\,\d{z}\d{s}\,.
\end{align*}
Using \eqref{ineq:nu-prod} it can be further bounded by
\begin{align*}
c t^2 \nu(x-y)\, T\!\! \int_{|z|\leq 1/2} [H^0(T,z)]^2 |z|^{\delta-\alpha}
\,\d{z}\,.
\end{align*}
Finally, note that
$\int_{|z|\leq 1/2}  [H^0(T,z)]^2 |z|^{d-\alpha} \,\d{z}
<\infty$ since $\alpha<d$,
and  $H^0(t,x)H^0(t,y)\geq 1$.
\end{proof}

\begin{corollary}\label{n-cor:Kato}
Let $\delta\in (0,\frac{d-\alpha}{2}]$.
For every $T>0$ there is a constant $c>0$ such that for all $t\in (0,T]$ and $x,y\in\Rdz$,
\begin{align*}
\int_0^t \int_{\Rd} \tilde{p}^{1}_{V_\delta^1}(s,x,z)\Big(V^1_\delta(z) - V^0_\delta(z) \Big)
\tilde{p}^{1}_{V_\delta^1}(t-s,z,y)\,\d{z}\d{s}
\leq c \max\{t,t^{\delta/\alpha}\}\,\, \tilde{p}^{1}_{V_\delta^1}(t,x,y)\,.
\end{align*}
\end{corollary}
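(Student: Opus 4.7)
The plan is to combine Lemma~\ref{n-prop:|x-y|-small}, Remark~\ref{n-rem:|z|<1/2}, and Proposition~\ref{n-prop:|x-y|-large}, with a case split according to whether $\delta\geq\alpha$ or $\delta<\alpha$.

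First, suppose $\delta\geq\alpha$. Then $V^1_\delta-V^0_\delta$ is globally bounded on $\Rdz$: for $|z|\geq 1/2$ both $V^1_\delta$ and $V^0_\delta=\kappa_\delta|z|^{-\alpha}$ are bounded (the former by monotonicity, Lemma~\ref{lem:V-decreasing}), while for $|z|\leq 1/2$ the asymptotics of Lemma~\ref{lem:diff_est} give a bound of the form $c|z|^{\delta-\alpha}$, $c|z|^{2-\alpha}\log(1/|z|)$, or $c|z|^{2-\alpha}$, each of which stays bounded as $|z|\to 0$ under the conditions $\delta\geq\alpha$ and $\alpha<2$. Pulling $\sup(V^1_\delta-V^0_\delta)$ out and applying the Chapman--Kolmogorov equation~\eqref{eq:C-K} for $\tilde{p}^{1}_{V^1_\delta}$ bounds the full integral by $ct\,\tilde{p}^{1}_{V^1_\delta}(t,x,y)\leq c\max\{t,t^{\delta/\alpha}\}\tilde{p}^{1}_{V^1_\delta}(t,x,y)$.

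For the remaining case $\delta<\alpha$, I split by $|x-y|$. When $|x-y|\leq 6$, Lemma~\ref{n-prop:|x-y|-small} directly gives the full bound on the whole integral. When $|x-y|>6$, the triangle inequality forces $|x|\vee|y|\geq 3$, and I split the $z$-integration at $|z|=1/2$: on $\{|z|\geq 1/2\}$ apply Remark~\ref{n-rem:|z|<1/2} (obtaining $ct\,\tilde{p}^{1}_{V^1_\delta}(t,x,y)$); on $\{|z|\leq 1/2\}$ apply Proposition~\ref{n-prop:|x-y|-large}.

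The main obstacle is to convert the output of Proposition~\ref{n-prop:|x-y|-large}, namely the upper bound $c\max\{t,t^{\delta/\alpha}\}\cdot t\nu(x-y)H^0(t,x)H^0(t,y)$, into a multiple of $\tilde{p}^{1}_{V^1_\delta}(t,x,y)$. This is precisely where the sharp lower bound of Theorem~\ref{thm:tp-lower} enters: it yields $\tilde{p}^{1}_{V^1_\delta}(t,x,y)\geq c\, p(t,x,y)H^0(t,x)H^0(t,y)$, and combining this with the asymptotics $p(t,x,y)\approx t\nu(x-y)$ (valid for $|x-y|\geq 6$ and $t\in(0,T]$; see~\eqref{approx:p} and~\eqref{approx:p-tnu}) converts the Proposition's bound into $c\max\{t,t^{\delta/\alpha}\}\tilde{p}^{1}_{V^1_\delta}(t,x,y)$. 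Summing the contributions of the three regions yields the corollary.
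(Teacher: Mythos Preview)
Your proposal is correct and follows essentially the same approach as the paper: the same case split $\delta\geq\alpha$ versus $\delta<\alpha$, the same decomposition by $|x-y|$ (at $6$) and by $|z|$ (at $1/2$), and the same combination of Lemma~\ref{n-prop:|x-y|-small}, Remark~\ref{n-rem:|z|<1/2}, Proposition~\ref{n-prop:|x-y|-large}, \eqref{approx:p-tnu}, and Theorem~\ref{thm:tp-lower}. One small remark: Theorem~\ref{thm:tp-lower} is stated with $H$, not $H^0$, so when you write $\tilde{p}^{1}_{V^1_\delta}(t,x,y)\geq c\, p(t,x,y)H^0(t,x)H^0(t,y)$ you are implicitly also invoking \eqref{approx:H_H0}; the paper makes this explicit.
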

\begin{proof}
If $\delta \geq \alpha$ the inequality is trivial, because due to 
Lemma~\ref{lem:diff_est} the function
$V_\delta^1-V_\delta^0$ is bounded, and $\tilde{p}^{\, 1}_{V_\delta^1}$ satisfies Chapman-Kolmogorov equation \eqref{eq:C-K}.
Suppose that $0<\delta<\alpha$.
If $|x-y|\leq 6$, the inequality follows from Lemma~\ref{n-prop:|x-y|-small}. If $|x-y|\geq 6$, then also $|x|\vee |y|\geq 3$ and the inequality holds by
Remark~\ref{n-rem:|z|<1/2}, Proposition~\ref{n-prop:|x-y|-large}, \eqref{approx:p-tnu}, Theorem~\ref{thm:tp-lower} and \eqref{approx:H_H0}.
\end{proof}

Now we use
$V^0_\delta-V^1_\delta$
to perturb $\tilde{p}^{1}_{V_\delta^1}$
in order to analyse 
$\tilde{p}^{1}_{V^0_\delta}$
and to complement the picture.

\begin{proof}[Proof of Theorem~\ref{thm:all_comp}]
The estimates for 
$\tilde{p}^{\,0}_{V^0_\delta}$
with $\delta\in (0,d-\alpha)$ are given in
\cite[Theorem~1.1]{MR3933622}, see Remark~\ref{rem:reflect_beta}.
For
$\tilde{p}^{\,0}_{V^1_\delta}$
with $\delta\in (0,d-\alpha)$
the estimates are given in
Proposition~\ref{prop:upper_bound}.
For $\tilde{p}^{1}_{V^1_\delta}$ 
with $\delta \in (0,\frac{d-\alpha}{2}]$
they are given in Theorem~\ref{thm:main-1}.
For $\tilde{p}^{1}_{V^0_\delta}$ 
with $\delta \in (0,\frac{d-\alpha}{2}]$
the upper estimates follow from
$\tilde{p}^{1}_{V^0_\delta} \leq \tilde{p}^{1}_{V^1_\delta}$
and the lower bound 
$\tilde{p}^{1}_{V^0_\delta} \geq c \tilde{p}^{1}_{V^1_\delta}$
is due to
Corollary~\ref{n-cor:Kato}
and Lemma~\ref{lem:Kato-signed} with $q_1=V^1_\delta$ and $q_2=V^0_\delta-V^1_\delta$.
If $\delta \in [\frac{d-\alpha}{2},d-\alpha)$
the estimates follow from the case $\delta \in (0,\frac{d-\alpha}{2}]$,
because by Remark~\ref{rem:reflect_beta} we have
$$
\tilde{p}^{1}_{V^0_{\delta'}}=\tilde{p}^{1}_{V^0_{\delta}}
\leq \tilde{p}^{1}_{V^1_\delta}
\leq
\tilde{p}^{1}_{V^1_{\delta'}}\,,
$$
where $\delta'=d-\alpha-\delta\in (0,\frac{d-\alpha}{2}]$.
The proof of the continuity 
goes by similar lines to that in the
proof of Theorem~\ref{thm:main-1}, see
Subsection~\ref{subsec:thm1-proof}.
\end{proof}

\subsection{Blowup}
In this subsection we 
explain why the value \fbox{$\delta^*:=\frac{d-\alpha}{2}$}
of the parameter $\delta$ in $V_\delta$ can be regarded as critical. Note that out of the four transition densities discussed in Subsection~\ref{subsec:4td}
the function 
$\tilde{p}^{1}_{V^0_\delta}$
is the smallest and
$\tilde{p}^{\,0}_{V^1_\delta}$ is the largest.
We start with a blowup result.

\begin{corollary}
Let $q=(1+\varepsilon)V^0_{\delta^*}$ for $\varepsilon>0$. Then $\tilde{p}^1_q(t,x,y)=\infty$ for all $t>0$, $x,y\in \Rdz$.
\end{corollary}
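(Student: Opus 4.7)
The potential $q(z)=(1+\varepsilon)\kappa_{\delta^*}|z|^{-\alpha}$ is a super-critical Hardy potential for the fractional Laplacian, because $\kappa_{\delta^*}=\kappa^{\ast}$ is the sharp fractional Hardy--Herbst constant. By the no-super-solution results of Abdellaoui et al.\ \cite{MR3479207,MR3492734} recalled in Subsection~\ref{subsec:Hist}, the Schr\"odinger perturbation of $p^{0}$ by $q$ satisfies $\tilde{p}^{\,0}_{q}(t,x,y)\equiv+\infty$ on $(0,\infty)\times\Rdz\times\Rdz$. The plan is to transfer this blow-up to $\tilde{p}^{\,1}_{q}$ via a local comparison of $p^1$ and $p^0$.

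\textbf{Local comparison and iteration.} Directly from the subordination representation~\eqref{def:p^m}, for any prescribed $\eta\in(0,1)$ one can choose $\tau,R>0$ so that
$$
p^{1}(t,x,y)\geq (1-\eta)\,p^{0}(t,x,y),\qquad t\in(0,\tau],\ |x-y|\leq R,
$$
because the additional factor $e^{-s}$ in the integrand defining $p^{1}$ is uniformly close to $1$ on the range of $s$ that carries the bulk of the mass of $\eta_t$ as $t\to 0^{+}$. Fix $\eta$ small enough that $(1-\eta)(1+\varepsilon)>1$ and $\eta(1+\varepsilon)<1$ simultaneously (any $\eta<\min\{\varepsilon,1\}/(1+\varepsilon)$ works). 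For $t\in(0,\tau]$ and $x,y\in B(0,R/4)\setminus\{0\}$, restrict every $z_{i}$-integration in the $n$-th term $p^{1}_{n}(t,x,y)$ of the perturbation series to $B(0,R/4)$; all consecutive spatial arguments in the $n+1$ kernel factors are then within distance $R$, and applying the above comparison factorwise yields
$$
p^{1}_{n}(t,x,y)\geq (1-\eta)^{n+1}P_n(t,x,y),
$$
where $P_n$ denotes the analogous $n$-fold iteration with $p^{0}$ in place of $p^{1}$ and with the truncated potential $\bar q:=q\,\mathbf{1}_{B(0,R/4)}$. Summing in $n$ and recognising the right-hand side as the Schr\"odinger series of $p^{0}$ perturbed by $(1-\eta)\bar q$, one obtains
$$
\tilde{p}^{\,1}_{q}(t,x,y)\geq (1-\eta)\,\tilde{p}^{\,0}_{(1-\eta)\bar q}(t,x,y).
$$

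\textbf{Blow-up of the truncated kernel and propagation.} Decompose $q=(1-\eta)\bar q+r$: on $B(0,R/4)$ the remainder is $r=\eta q=\eta(1+\varepsilon)\kappa^{\ast}|z|^{-\alpha}$, a \emph{sub}-critical Hardy potential since $\eta(1+\varepsilon)<1$, while outside $B(0,R/4)$ one has $r=q$, which is bounded. Were $\tilde{p}^{\,0}_{(1-\eta)\bar q}$ finite, the identity \eqref{eq:pert-of-pert} together with the sub-critical Hardy absorption stemming from the estimates of \cite{MR3933622} and the bounded-potential case of Lemma~\ref{lem:Kato-signed} would give $\tilde{p}^{\,0}_{q}=\widetilde{(\tilde{p}^{\,0}_{(1-\eta)\bar q})}_{r}<\infty$, contradicting the fractional super-critical blow-up recalled at the outset. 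Hence $\tilde{p}^{\,1}_{q}(t,x,y)=+\infty$ for all $t\in(0,\tau]$ and $x,y\in B(0,R/4)\setminus\{0\}$. Chapman--Kolmogorov \eqref{eq:C-K} combined with the strict positivity $\tilde{p}^{\,1}_{q}\geq p^{1}>0$ then propagates the blow-up to every $(t,x,y)\in(0,\infty)\times\Rdz\times\Rdz$: once $\tilde{p}^{\,1}_{q}$ is infinite on a set of positive measure in two of its variables, convolution against a strictly positive factor on a complementary time interval spreads the blow-up everywhere.

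\textbf{Main obstacle.} The delicate step is justifying that $\tilde{p}^{\,0}_{(1-\eta)\bar q}=+\infty$: because the truncation destroys the scale-invariance on which the direct blow-up proof for the pure fractional Hardy potential relies, this is not an immediate citation. The decomposition of the remainder $r$ into a sub-critical Hardy piece (absorbable through the perturbation theory of \cite{MR3933622}) together with a bounded exterior piece (handled by Lemma~\ref{lem:Kato-signed}) is the key technical ingredient that routes the contradiction through the already-established sub-critical perturbation theory.
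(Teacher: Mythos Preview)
Your approach is genuinely different from the paper's. The paper exploits the lower bound of Theorem~\ref{thm:all_comp} directly: writing $\tilde p^{\,1}_q$ as the perturbation of $\tilde p^{\,1}_{V^0_{\delta^*}}$ by $\varepsilon V^0_{\delta^*}$ and inserting the bound $\tilde p^{\,1}_{V^0_{\delta^*}}(s,x,z)\gtrsim p^1(s,x,z)\,s^{(d-\alpha)/\alpha}|x|^{-(d-\alpha)/2}|z|^{-(d-\alpha)/2}$ into the first Duhamel term produces a factor $|z|^{-d}$ in the integrand; since this is non-integrable at the origin and $p^1$ is strictly positive, one line gives $\tilde p^{\,1}_q=\infty$ for all $(t,x,y)$. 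Thus the corollary is an immediate dividend of the paper's main estimates and is entirely self-contained.

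Your route---comparing $p^1$ with $p^0$ locally and reducing to the known super-critical blowup for the fractional Laplacian---avoids Theorem~\ref{thm:all_comp} altogether, which is appealing. However, there is a genuine gap in your contradiction step. You assume $\tilde p^{\,0}_{(1-\eta)\bar q}$ is finite and claim that perturbing it by $r$, whose restriction to $B(0,R/4)$ is the singular potential $\eta(1+\varepsilon)\kappa^{\ast}|z|^{-\alpha}$, yields a finite kernel via ``sub-critical Hardy absorption from \cite{MR3933622}''. But the estimates in \cite{MR3933622} concern perturbations of $p^0$; they say nothing about perturbations of the hypothetically finite transition density $\tilde p^{\,0}_{(1-\eta)\bar q}$, for which you have no upper bound whatsoever. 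Mere finiteness of a transition density does not make a singular potential relatively Kato for it, so the finiteness of $\widetilde{(\tilde p^{\,0}_{(1-\eta)\bar q})}_{r}$ is unjustified.

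The repair is simpler than your decomposition: instead of adding $r$, add only the \emph{bounded} potential $(1-\eta)q\,\mathbf{1}_{B(0,R/4)^c}$. A bounded potential is relatively Kato for \emph{any} finite transition density (by Chapman--Kolmogorov, $\int_0^t\!\int\rho(s,x,z)\,C\,\rho(t-s,z,y)\,\d z\,\d s=Ct\,\rho(t,x,y)$), hence $\tilde p^{\,0}_{(1-\eta)q}$ would be finite. Since $(1-\eta)(1+\varepsilon)>1$, the potential $(1-\eta)q$ is itself super-critical for $p^0$, giving $\tilde p^{\,0}_{(1-\eta)q}=\infty$ and the contradiction you want. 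With this fix---and a more careful proof of the uniform inequality $p^1\ge(1-\eta)p^0$, which does hold once both $R$ and $\tau$ are chosen small in terms of $\eta$---your argument goes through.
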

\begin{proof}
Clearly, $\tilde{p}^1_q \geq \tilde{p}^1_{V^0_{\delta^*}}$. By
Theorem~\ref{thm:all_comp},
\begin{align}\label{ineq:for-blowup}
\tilde{p}^1_{V^0_{\delta^*}}(t,x,y)\geq c p^1(t,x,y) t^{(d-\alpha)/\alpha} |x|^{-(d-\alpha)/2} |y|^{-(d-\alpha)/2}\,.
\end{align}
According to \eqref{eq:pert-of-pert}, we consider 
$\tilde{p}^1_q$ as a perturbation of 
$\tilde{p}^1_{V^0_{\delta^*}}$ by $\varepsilon V^0_{\delta^*}$. Then, by the Duhamel's fromula
\eqref{eq:Duh}
and  \eqref{ineq:for-blowup} we get
\begin{align*}
\tilde{p}^1_q(t,x,y)&=
\tilde{p}^1_{V^0_{\delta^*}}(t,x,y)+
\int_0^t\int_{\Rd}
\tilde{p}^1_q(s,x,z)  \varepsilon V^0_{\delta^*}(z) \tilde{p}^1_{V^0_{\delta^*}}(t-s,z,y)\,\d{z}\d{s}
\\
&\geq \varepsilon \kappa_{\delta^*}
\int_0^t\int_{\Rd}
\tilde{p}^1_{V^0_{\delta^*}} (s,x,z)   |z|^{-\alpha} \,\tilde{p}^1_{V^0_{\delta^*}}(t-s,z,y)\,\d{z}\d{s}\\
&\geq  c (|x||y|)^{-(d-\alpha)/2} \int_0^t [s(t-s)]^{(d-\alpha)/\alpha} \int_{\Rd} 
p^1 (s,x,z)   |z|^{-d} \,p^1(t-s,z,y)\,\d{z}\d{s}  =\infty.
\end{align*}
In the last equality, we used the fact that for each $0<s<t$ and $x,y\in\Rdz$ there is $c>0$ such that $p^1(s,x,z)p^1(t-s,z,y)\geq c$ for all $|z|\leq 1$, and $\int_{|z|\leq 1} |z|^{-d}dz=\infty$.
\end{proof}

Now we give a no-blowup result.
Recall that $\kappa_\delta$ attains its maximum at $\delta=\delta^*$.

\begin{corollary}
Let $\delta\in (0,d-\alpha)$ and $\delta\neq \delta^*$.  If $\varepsilon\in (0,\frac{\kappa_{\delta^*}-\kappa_\delta}{\kappa_\delta})$ and $q=(1+\varepsilon)V^1_\delta$, then $\tilde{p}^{\,0}_q (t,x,y)<
\infty$ for  
all $t>0$, $x,y\in \Rdz$.
\end{corollary}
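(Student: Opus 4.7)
My plan is to decompose $q = (1+\varepsilon)V_\delta^1$ as $q = w_1 + w_2$ with $w_1 := q \wedge V_{\delta^*}^0$ and $w_2 := q - w_1 \geq 0$. By monotonicity of the perturbation series in the potential, $\tilde{p}^{\,0}_{w_1} \leq \tilde{p}^{\,0}_{V_{\delta^*}^0}$, and the right-hand side is finite on $(0,T]\times\Rdz\times\Rdz$ for every $T>0$ by Theorem~\ref{thm:all_comp} applied with $\delta = \delta^* \in (0,d-\alpha)$. I would then realize $\tilde{p}^{\,0}_q$ as the perturbation of $\tilde{p}^{\,0}_{w_1}$ by $w_2$ via \eqref{eq:pert-of-pert}, and establish its finiteness through Lemma~\ref{lem:Kato-signed}. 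This reduces the problem to showing that $w_2$ is bounded on $\Rdz$.

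The boundedness of $w_2$ is where the substantive work sits. Near the origin, Lemma~\ref{lem:lim_V} gives $q(x)/|x|^{-\alpha} \to (1+\varepsilon)\kappa_\delta$ as $|x|\to 0^+$, and the hypothesis $\varepsilon < (\kappa_{\delta^*}-\kappa_\delta)/\kappa_\delta$ makes this limit strictly less than $\kappa_{\delta^*} = V_{\delta^*}^0(x)/|x|^{-\alpha}$; hence there exists $r_0>0$ with $q \leq V_{\delta^*}^0$ on $\{0<|x|\leq r_0\}$, so $w_2 \equiv 0$ there. Away from the origin, the asymptotics of $K_\nu$ recalled in Subsection~\ref{sec:ap-Bessel} together with \eqref{def:V} yield $V_\delta^1(x) \sim c|x|^{-\alpha/2}$ as $|x|\to\infty$, and this together with the continuity of $V_\delta^1$ on $\Rdz$ shows that $V_\delta^1$, and hence $w_2 \leq q$, is bounded on $\{|x|>r_0\}$. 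In particular $M := \|w_2\|_\infty$ is finite.

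With $M < \infty$ in hand, the Chapman--Kolmogorov identity for the transition density $\tilde{p}^{\,0}_{w_1}$ yields
\[
\int_0^t\!\!\int_{\Rd} \tilde{p}^{\,0}_{w_1}(s,x,z)\, w_2(z)\, \tilde{p}^{\,0}_{w_1}(t-s,z,y)\, \d z\, \d s \leq M t\, \tilde{p}^{\,0}_{w_1}(t,x,y),
\]
which is at most $\tfrac14\, \tilde{p}^{\,0}_{w_1}(t,x,y)$ whenever $t \leq 1/(4M)$ -- exactly the Kato-type hypothesis of Lemma~\ref{lem:Kato-signed} with $\epsilon = 1/4$. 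That lemma, combined with \eqref{eq:pert-of-pert}, then identifies $\tilde{p}^{\,0}_q$ with $\widetilde{(\tilde{p}^{\,0}_{w_1})}_{w_2}$ as a finite transition density on $(0,\infty)\times\Rdz\times\Rdz$.

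The only genuinely non-trivial ingredient is the boundedness of $w_2$, which rests on the decay $V_\delta^1(x) = O(|x|^{-\alpha/2})$ at infinity -- a relativistic feature entirely absent from the pure fractional Hardy potential $V_\delta^0$. The condition on $\varepsilon$ is used solely to place the leading coefficient of $q$ near the origin strictly below the critical Hardy constant $\kappa_{\delta^*}$, providing exactly the room needed to dominate $w_1$ by $V_{\delta^*}^0$; everything else is a routine application of the perturbation machinery already in the paper.
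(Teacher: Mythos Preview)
Your proof is correct and takes a genuinely different route from the paper's.

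The paper decomposes $q = V_\delta^0 + (1+\varepsilon)(V_\delta^1 - V_\delta^0) + \varepsilon V_\delta^0 = q_1+q_2+q_3$ and perturbs in three stages: it first builds $\tilde{p}^{\,0}_{V_\delta^0}$, then uses Corollary~\ref{cor:Kato} (the relative Kato property of $V_\delta^1-V_\delta^0$, which in turn rests on Lemmas~\ref{lem:Kato} and~\ref{lem:diff_est}) to absorb $q_2$ with an arbitrarily small multiplicative loss $1+\theta$, and finally applies \eqref{ineq:tran_den} with $(1+\theta)q_3 \leq (\kappa_{\delta^*}-\kappa_\delta)|x|^{-\alpha}$ to land on $\tilde{p}^{\,0}_{V_{\delta^*}^0}$. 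Your argument bypasses all of this by truncating $q$ at $V_{\delta^*}^0$ directly: the key observation that $(1+\varepsilon)\kappa_\delta < \kappa_{\delta^*}$ forces $q \leq V_{\delta^*}^0$ near the origin, while the radial decay of $V_\delta^1$ (Lemma~\ref{lem:V-decreasing} would suffice here in place of the asymptotics you quote) makes the remainder $w_2$ globally bounded. This reduces the second perturbation to the trivial bounded-potential case, avoiding Corollary~\ref{cor:Kato} entirely. Your approach is more elementary and self-contained; the paper's approach reuses machinery already developed for Proposition~\ref{prop:upper_bound} and yields as a by-product the explicit comparison $\tilde{p}^{\,0}_q \leq (1+\theta)\,\tilde{p}^{\,0}_{V_{\delta^*}^0}$ on short time intervals.
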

\begin{proof}
We have $q= V^0_\delta+(1+\varepsilon)(V^1_\delta-V^0_\delta) +\varepsilon V^0_\delta = q_1+q_2+q_3$. First, we construct $\tilde{p}^{\,0}_{q_1}=\tilde{p}^{\,0}_{V^0_\delta}$.
Next, we perturb it by $q_2$.
Proceeding like in the proof of Proposition~\ref{prop:upper_bound}, we see that $q_2$
is relatively Kato for $\tilde{p}^{\,0}_{q_1}$, and
thus by \cite[Theorem~2]{MR2457489} we get $\tilde{p}^{\,0}_{q_1+q_2}\leq (1+\theta) \tilde{p}^{\,0}_{q_1}$ for arbitrarily small  fixed $\theta>0$ on $(0,t_0]\times\Rdz\times\Rdz$.
We assure that $\varepsilon (1+\theta)   \leq  \frac{\kappa_{\delta^*}-\kappa_\delta}{\kappa_\delta}$ and
we perturb $\tilde{p}^{\,0}_{q_1+q_2}$ 
by $q_3$. Note that
$$
(1+\theta) q_3(x) \leq \frac{\kappa_{\delta^*}-\kappa_\delta}{\kappa_\delta} V^0_\delta(x) = (\kappa_{\delta^*}-\kappa_\delta)|x|^{-\alpha}=:q_4(x)\,.
$$
Therefore, by \eqref{ineq:tran_den}, on $(0,t_0]\times\Rdz\times\Rdz$ we get
$$
\tilde{p}^{\,0}_q=\widetilde{(\tilde{p}^{\,0}_{q_1+q_2})}_{q_3} 
\leq (1+\theta) \widetilde{(\tilde{p}^0_{q_1})}_{(1+\theta)q_3}
\leq (1+\theta) 
\widetilde{(\tilde{p}^{\,0}_{q_1})}_{q_4}
= (1+\theta) \tilde{p}^{\,0}_{V^0_{\delta^*}}\,.
$$
Finally, the finiteness holds for all $t>0$ by the latter inequality and Chapman-Kolmogorov equation \eqref{eq:C-K}.
\end{proof}

\appendix

\section{}\label{sec:app}

Despite the obvious collision we shall use the well established notation 
for the Bessel function~$K_\nu(r)$ and the L{\'e}vy measure $\nu(r)$. It is always self explanatory
which object is in use and should cause no confusion.

\subsection{The Bessel function}
\label{sec:ap-Bessel}

The modified Bessel function of the second kind $K_{\nu}$ is given by
$$
K_{\nu}(r)=\frac{1}{2} \left(\frac{r}{2}\right)^{\nu} \int_0^{\infty} u^{-\nu-1} e^{-u-\frac{r^2}{4u}} \,du\,,\quad \qquad \nu\in \R ,\,\, r>0\,.
$$
It holds that $K_{\nu}=K_{-\nu}$ and $K_{1/2}(x)=K_{-1/2}(x)=\sqrt{\pi/2}\, x^{-1/2} e^{-x}$.
The asymptotics of  $K_{\nu}$ at the origin and at infinity are well known,
$$\lim_{r\to 0^+}\dfrac{K_{0}(r)}{\log (1/r)}= 1\,,$$
\begin{align}\label{eq:lim_K}
\lim_{r\to 0^+}\frac{K_{\nu}(r)}{r^{-\nu}}= 2^{\nu-1}\Gamma(\nu),\qquad \mbox{for } \nu>0\,, 
\end{align}
and
$$\lim_{r\to \infty}{K_{\nu}(r)}\sqrt{r}e^r= \sqrt{\pi/2}, \qquad \mbox{for } \nu \geq 0\,.$$
It is not hard to see that $K_{\nu}(r)$ is decreasing in $r>0$.
We will use two representations
of the derivative of $K_\nu(r)$ for $r>0$,
see \cite[page~79]{MR0010746}
or \cite[(4.24)]{MR507514},
\begin{align}\label{def_K-1}
K_{\nu}'(r)=-K_{\nu+1}(r)+ (\nu/r) K_{\nu}(r)\,,
\end{align}
\begin{align}\label{def_K-2}
K_{\nu}'(r)=-K_{\nu-1}(r)-(\nu/r) K_{\nu}(r)\,.
\end{align}
From \cite[Lemma~2.2]{MR486686} we have that for $r>0$,
\begin{align}\label{K/K-1}
\nu \mapsto K_{\nu+1}(r)/K_{\nu}(r)\quad \mbox{is increasing}\,,
\end{align}
\begin{align}\label{K/K-2}
\nu \mapsto K_{\nu-1}(r)/K_{\nu}(r)\quad \mbox{is decreasing}\,.
\end{align}

\subsection{The relativistic stable kernel}\label{sec:rel-den}
We assume that $\alpha\in(0,2)$.
Recall that $\nu(x)=\nu^1(x)$ and $p(t,x,y)=p^1(t,x,y)$.
According to \cite[Example~2.3(a)]{TJ-KK-KS-2022} the function $\nu$ is an example of a density of a L{\'e}vy measure satisfying the assumptions \cite[(A1)--(A2)]{TJ-KK-KS-2022}, thus
the first two lemmas below
follow from \cite[Lemma~5.1]{TJ-KK-KS-2022}.
We note that the estimates \eqref{approx:p} were established in \cite[Theorem 4.1]{MR2917772}, for more general approach see also \cite{MR3666815}.

\begin{lemma}
Let $R\geq 1$. There is a constant $c$ such that for all $|x-z|\geq 1$ and $|w-z|\leq 2R$,
\begin{align}\label{ineq:nu-shift-2}
\nu(x-z)\leq c\, \nu(x-w)\,,
\end{align}
and for all $|x|,|y|\geq 2R$ and $|z|\leq R$,
\begin{align}\label{ineq:nu-prod}
\nu(x-z)\nu(z-y)\leq c\, \nu(x-y)\,.
\end{align}
\end{lemma}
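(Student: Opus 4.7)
The plan is to reduce both inequalities to two-sided bounds on the radial profile of $\nu$. Starting from \eqref{def:nu_def} with $m=1$, namely $\nu(r) \approx K_{(d+\alpha)/2}(r)/r^{(d+\alpha)/2}$, and combining the small-$r$ behaviour \eqref{eq:lim_K} with the large-$r$ asymptotic of $K_{(d+\alpha)/2}$ recorded in Subsection~\ref{sec:ap-Bessel}, together with the continuity and strict positivity of $K_{(d+\alpha)/2}$ on $(0,\infty)$, one obtains
\[
\nu(r) \approx r^{-(d+\alpha+1)/2}\, e^{-r}, \qquad r \geq 1,
\]
while $\nu$ is continuous, positive and radially decreasing on all of $(0,\infty)$. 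These two facts are the only inputs I would use.

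For \eqref{ineq:nu-shift-2}, first note that the reverse triangle inequality gives $\bigl|\,|x-z|-|x-w|\,\bigr| \leq |z-w| \leq 2R$. If $|x-w| \leq 1$, then monotonicity of $\nu$ yields $\nu(x-w) \geq \nu(1) \geq \nu(x-z)$ (since $|x-z| \geq 1$) and the bound holds with $c=1$. Otherwise both $|x-z|$ and $|x-w|$ lie in $[1,\infty)$, and the displayed uniform asymptotic implies
\[
\frac{\nu(x-z)}{\nu(x-w)} \leq C\!\left(\frac{|x-w|}{|x-z|}\right)^{\!(d+\alpha+1)/2}\! e^{|x-w|-|x-z|} \leq C(1+2R)^{(d+\alpha+1)/2}\, e^{2R},
\]
because $|x-w|/|x-z| \leq 1 + 2R$ (using $|x-z| \geq 1$) and $|x-w|-|x-z| \leq 2R$.

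For \eqref{ineq:nu-prod}, the hypotheses $|x|,|y|\geq 2R$ and $|z|\leq R$ force $|x-z|, |y-z| \geq R \geq 1$. If $|x-y| \leq 1$ I would use monotonicity again: $\nu(x-z)\nu(y-z) \leq \nu(R)^2$ while $\nu(x-y) \geq \nu(1)$, so the inequality holds with $c=\nu(R)^2/\nu(1)$. If $|x-y| \geq 1$, all three distances exceed $1$ and the uniform asymptotic applies to each of them. The exponential factor is handled by the triangle inequality $|x-y| \leq |x-z|+|y-z|$, which yields $e^{-|x-z|-|y-z|} \leq e^{-|x-y|}$. For the polynomial factor, the elementary bound $|x-z|+|y-z| \leq 2|x-z|\,|y-z|$, valid since both quantities are at least $1$, combined with the same triangle inequality, gives $|x-y| \leq 2|x-z||y-z|$, hence $(|x-z||y-z|)^{-(d+\alpha+1)/2} \leq 2^{(d+\alpha+1)/2}|x-y|^{-(d+\alpha+1)/2}$, and the product estimate follows.

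The most delicate point I expect is promoting the purely asymptotic relation $\nu(r) \sim c\, r^{-(d+\alpha+1)/2}e^{-r}$ as $r\to\infty$ to the two-sided comparability on the whole interval $[1,\infty)$. This requires only a compactness argument on each finite interval $[1,N]$, glued smoothly with the tail asymptotic via continuity and positivity of $K_{(d+\alpha)/2}$, but it is the one place where one has to be careful. Once this uniform profile estimate is in hand, both \eqref{ineq:nu-shift-2} and \eqref{ineq:nu-prod} reduce to elementary manipulations of the triangle inequality, with comparability constants depending only on $d$, $\alpha$, and $R$.
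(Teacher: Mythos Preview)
Your argument is correct. The two-sided profile estimate $\nu(r)\approx r^{-(d+\alpha+1)/2}e^{-r}$ on $[1,\infty)$ follows exactly as you say from the asymptotics of $K_{(d+\alpha)/2}$ and compactness on $[1,N]$, and from there both inequalities reduce to the triangle-inequality manipulations you wrote out; the case splits at $|x-w|\leq 1$ and $|x-y|\leq 1$ are handled cleanly by monotonicity.

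The paper does not give a proof at all: it simply cites \cite[Lemma~5.1]{TJ-KK-KS-2022}, a companion paper where such shift and product inequalities are established for a general class of L\'evy densities satisfying structural assumptions (A1)--(A2), with $\nu$ here being a specific instance. Your approach is a direct, self-contained computation tailored to the explicit Bessel profile, which is shorter and more transparent for this particular $\nu$ but does not extend beyond it. The cited lemma, by contrast, isolates the abstract properties (radial monotonicity plus a comparability condition on the profile) that make such inequalities work, so it applies uniformly to all kernels in the class treated in that paper.
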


\begin{lemma}
Let $T,R>0$. For all $t\in (0,T]$ and $x,y\in\Rd$,
\begin{align}\label{approx:p}
p(t,x,y)\approx 
\min\{ t^{-d/\alpha}, t\nu(x-y)\}\,.
\end{align}
For all $t\in(0,T]$ and $|x-y|\leq R$,
\begin{align}\label{approx:p-stable}
p(t,x,y)\approx 
\min\{ t^{-d/\alpha}, t|x-y|^{-d-\alpha}\}\,.
\end{align}
For all $t\in (0,T]$ and $|x-y|\geq R$,
\begin{align}\label{approx:p-tnu}
p(t,x,y)\approx t\nu(x-y)\,.
\end{align}
For every $a>0$ there is $c>0$ such that for all $t\in (0,T]$, $z,y\in\Rd$ and $|w|\leq a t^{1/\alpha}$,
\begin{align}\label{ineq:p-shift}
p(t,z+w,y)\leq c p(t,z,y)\,.
\end{align}
\end{lemma}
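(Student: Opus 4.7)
My plan is to derive all four estimates from the global two-sided bound \eqref{approx:p}, which the excerpt has already attributed to Chen--Kumagai \cite{MR2917772}. The remaining three assertions are straightforward consequences of that bound combined with the asymptotics of the modified Bessel function $K_{(d+\alpha)/2}$ that defines $\nu$ via \eqref{def:nu_def}.

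To obtain \eqref{approx:p-stable} I would first observe that on $\{0<|x-y|\leq R\}$ the radial profile $r\mapsto \nu(r)=c\,K_{(d+\alpha)/2}(r)/r^{(d+\alpha)/2}$ is comparable to $r^{-(d+\alpha)}$: near the origin this follows from \eqref{eq:lim_K} applied with index $(d+\alpha)/2>0$, and on any compact subinterval of $(0,\infty)$ from continuity and positivity of $K_{(d+\alpha)/2}$. Substituting this comparability into \eqref{approx:p} then yields \eqref{approx:p-stable}, with constants depending on $R$ and $T$.

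For \eqref{approx:p-tnu} the upper bound $p(t,x,y)\leq c\,t\nu(x-y)$ is immediate from \eqref{approx:p}. For the lower bound, fix $t\in(0,T]$ and $|x-y|\geq R$. If $t\nu(x-y)\leq t^{-d/\alpha}$, the minimum in \eqref{approx:p} equals $t\nu(x-y)$ and there is nothing to do. Otherwise $t^{-d/\alpha}<t\nu(x-y)$, but then $t^{-d/\alpha}\geq T^{-d/\alpha}$ while $t\nu(x-y)\leq T\sup_{|u|\geq R}\nu(u)$, so the two quantities differ by at most a factor depending only on $T$ and $R$, and both are comparable to $p(t,x,y)$.

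Finally, for the shift inequality \eqref{ineq:p-shift}, the plan is to apply \eqref{approx:p} to both sides and split into two cases according to the size of $|z-y|$. If $|z-y|\leq (a+1)t^{1/\alpha}$, then $|z+w-y|\leq (2a+1)t^{1/\alpha}$ as well, so by the local asymptotics $\nu(z\pm w-y)\approx t^{-(d+\alpha)/\alpha}$ and both $p(t,z+w,y)$ and $p(t,z,y)$ are comparable to $t^{-d/\alpha}$. If $|z-y|>(a+1)t^{1/\alpha}$, the triangle inequality yields $|z+w-y|\geq t^{1/\alpha}$ and $|z+w-y|\approx |z-y|$ with a constant depending only on $a$; it then suffices to verify $\nu(z+w-y)\leq c\,\nu(z-y)$, which one reads off from the explicit form of $\nu$: the exponential factor changes by at most $\exp(|w|)\leq \exp(aT^{1/\alpha})$, while the polynomial factor is controlled by the comparability of magnitudes. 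The main obstacle is precisely this last case, because the exponential decay of $\nu$ means that shifting its argument could a priori spoil the inequality; the estimate survives only because $|w|\leq aT^{1/\alpha}$ is uniformly bounded across the allowed range of $t$.
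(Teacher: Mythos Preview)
Your derivations are correct, but note that the paper does not actually prove this lemma: it simply invokes \cite[Lemma~5.1]{TJ-KK-KS-2022} (after noting that $\nu$ satisfies the structural assumptions there), and separately cites \cite[Theorem~4.1]{MR2917772} for the basic estimate \eqref{approx:p}. Your approach is therefore different in spirit --- you take \eqref{approx:p} as the single input and derive \eqref{approx:p-stable}, \eqref{approx:p-tnu}, \eqref{ineq:p-shift} from it by hand --- whereas the paper outsources all four to a general lemma proved elsewhere. Your route has the advantage of being self-contained and of making transparent that the last three estimates are elementary consequences of the first together with the explicit Bessel form of $\nu$; the paper's citation has the advantage of handling all such kernels uniformly without repeating the case analysis.

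Two small points. First, a minor attribution slip: the reference for \eqref{approx:p} is Chen--Kim--Song \cite{MR2917772}, not Chen--Kumagai. Second, in Case~1 of your argument for \eqref{ineq:p-shift}, the sentence ``$\nu(z\pm w-y)\approx t^{-(d+\alpha)/\alpha}$'' is not literally correct, since $\nu$ can be much larger (when $|z-y|\ll t^{1/\alpha}$). What you need --- and what your conclusion uses --- is only the one-sided bound $t\nu(z-y)\geq c\,t^{-d/\alpha}$ (and likewise for the shifted argument), so that the minimum in \eqref{approx:p} is realised by $t^{-d/\alpha}$. With that wording fixed the argument is clean.
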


\begin{lemma}\label{lam:p-deriv-bound}
There is a constant $c$ such that for all $t\geq 1$ and $x,y\in\Rd$
$$
|\partial_t p(t,x,y)| \leq c e^{-t}\,.
$$
For every $x\neq y$ there is a constant $c$ such that for all $t\in (0,1]$,
$$
|\partial_t p(t,x,y)| \leq c\,.
$$
\end{lemma}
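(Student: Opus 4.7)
The starting point is the Fourier representation \eqref{def:p-m} with $m=1$. Since $\psi(z):=(|z|^2+1)^{\alpha/2}$ satisfies $\psi\ge 1$ and $\psi e^{-t\psi}$ is integrable on $\Rd$ for every $t>0$, differentiation under the integral sign is justified and gives
\[
\partial_t p(t,x,y) = -(2\pi)^{-d}\int_{\Rd} e^{i\langle x-y,z\rangle}\,\psi(z)\, e^{-t\psi(z)}\,\d{z}.
\]

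For the first bound ($t\ge 1$), I would simply take absolute values and exploit $\psi(z)\ge 1$ to factor $\psi(z)e^{-t\psi(z)}\le e^{-(t-1)}\psi(z)e^{-\psi(z)}$; since $\int \psi(z)e^{-\psi(z)}\d{z}<\infty$ (polynomial growth of $\psi$ is beaten by super-polynomial decay of $e^{-\psi}$), this yields $|\partial_t p(t,x,y)|\le Ce^{-t}$.

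For the second bound ($t\in(0,1]$, $x\ne y$ fixed), I would use integration by parts to exploit the oscillating factor $e^{i\langle x-y,z\rangle}$. After rotating coordinates so that $y-x=re_1$ with $r:=|x-y|>0$, fix an integer $N>d+\alpha$. Since $\psi(z)e^{-t\psi(z)}$ and all its $z_1$-derivatives decay exponentially in $|z|$, integrating by parts $N$ times in $z_1$ (boundary terms vanish) yields
\[
\partial_t p(t,x,y) = -\frac{(ir)^{-N}}{(2\pi)^d}\int_{\Rd} e^{-irz_1}\,\partial_{z_1}^N\!\bigl[\psi(z)e^{-t\psi(z)}\bigr]\,\d{z}.
\]
By Fa\`a di Bruno's formula combined with the elementary bound $|\partial_{z_1}^j\psi(z)|\le c_j(|z|^2+1)^{(\alpha-j)/2}$, the last integrand is a finite sum of expressions each dominated by $Ct^k(|z|^2+1)^{((k+1)\alpha-N)/2}e^{-t\psi(z)}$ for some $k\in\{0,\ldots,N\}$. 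It therefore suffices to show that $I_k(t):=t^k\int_{\Rd}(|z|^2+1)^{((k+1)\alpha-N)/2}e^{-t\psi(z)}\d{z}\le C$ uniformly on $(0,1]$.

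The contribution to $I_k(t)$ from $|z|\le 1$ is trivially bounded by a $t$-independent constant. For $|z|\ge 1$, I would use $\psi(z)\ge c|z|^\alpha$ and substitute $u=t^{1/\alpha}|z|$ in the radial integral; a short case analysis depending on whether $N$ is less than, equal to, or greater than $(k+1)\alpha+d$ bounds the contribution by $C\bigl(1+t^{(N-d)/\alpha-1}(1+\log(2/t))\bigr)$. The choice $N>d+\alpha$ forces $(N-d)/\alpha-1>0$, so this stays uniformly bounded on $(0,1]$. Combining all this yields $|\partial_t p(t,x,y)|\le C_N r^{-N}$, a constant depending only on $x,y$, as required. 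The main delicacy lies in this last case analysis: in the regime where the polynomial factor is non-integrable at infinity, the apparent blow-up of $I_k(t)$ as $t\to 0^+$ is exactly compensated by the prefactor $t^k$ produced by Fa\`a di Bruno, which becomes transparent only after the substitution $u=t^{1/\alpha}|z|$.
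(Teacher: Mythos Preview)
Your argument is correct. For $t\ge 1$ both you and the paper do essentially the same thing: pass to the Fourier side, use $\psi\ge 1$ to extract the factor $e^{-t}$, and observe that the remaining integral and its $t$-derivative are finite.

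For $t\in(0,1]$ the routes differ. The paper writes $p(t,x,y)=e^{-t}(2\pi)^{-d}\int e^{-i\langle y-x,z\rangle}e^{-t\Psi(z)}\d z$ with $\Psi(z)=(|z|^2+1)^{\alpha/2}-1\approx |z|^2\wedge|z|^\alpha$, and then invokes \cite[Propositions~C.1 and~C.5]{JM-KS-2021} to get the quantitative bound $|\partial_t(e^{t}p(t,x,y))|\le ct^{-1}\Upsilon_t(x-y)$, which is uniform in $x,y$ and specializes to a constant for fixed $x\ne y$. Your proof is instead fully self-contained: you exploit the oscillation $e^{i\langle x-y,z\rangle}$ via $N$-fold integration by parts, track the Fa\`a di Bruno expansion of $\partial_{z_1}^N[\psi e^{-t\psi}]$, and then do a clean scaling analysis $u=t^{1/\alpha}|z|$ to show each resulting term $I_k(t)$ stays bounded. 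What you gain is an elementary, reference-free proof (and in fact the decay $|\partial_t p|\le C_N|x-y|^{-N}$ for every $N$, slightly more than stated); what the paper's route gains is brevity and a bound that is also uniform in the spatial variables.
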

\begin{proof}
Note that $p(t,x,y)= e^{-t} (2\pi)^{-d} \int_{\Rd} e^{-i \left<y-x,z\right>}\, e^{-t \Psi(z)} \,\d{z}$,
where $\Psi(x)=(|x|^2+1)^{\alpha/2}-1$.
Both, the above integral and its derivative in $t$, are bounded for $t\geq 1$, proving the first statement.
Now, notice that
$\Psi(x)=(|x|^2+1)^{\alpha/2}-1\approx |x|^2\land |x|^{\alpha}$.
This allows us to use
\cite[Propositions~C.1 and~C.5]{JM-KS-2021}, which gives
$|\partial_t  \, e^{t}p(t,x,y)|\leq c t^{-1}\Upsilon_t(x-y)\leq c$,
see \cite[Section~3]{JM-KS-2021} for the definition of $\Upsilon_t$.
This proves the second part.

\end{proof}

\begin{lemma}\label{lem:toLm}
For all $x\neq y$ we have
$
\lim_{t\to 0^+} \dfrac{p(t,x,y)}{t} = \nu(x-y)
$.
\end{lemma}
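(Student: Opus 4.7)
The plan is to pass to the limit under the integral in the subordination representation recalled in Subsection~\ref{subsec:rel-op}. Namely, from \eqref{def:p^m} and \eqref{def:nu_def} (with $m=1$) one has
$$
p(t,x,y) = \int_0^{\infty} g_s(x-y)\, e^{-s}\, \eta_t(s)\, \d s, \qquad
\nu(x-y) = \int_0^{\infty} g_s(x-y)\, e^{-s}\, \pi(s)\, \d s,
$$
where $\pi(s) = \tfrac{\alpha/2}{\Gamma(1-\alpha/2)}\, s^{-1-\alpha/2}$ is the density of the L\'evy measure $\Pi$ of the $\alpha/2$-stable subordinator. Dividing by $t$, it suffices to show
$\int_0^{\infty} g_s(x-y) e^{-s}\, \eta_t(s)/t \, \d s \to \int_0^{\infty} g_s(x-y) e^{-s}\, \pi(s)\, \d s$
and I will establish this by dominated convergence in the variable $s$.

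For the pointwise convergence $\eta_t(s)/t \to \pi(s)$, I will use the stable scaling $\eta_t(s) = t^{-2/\alpha}\eta_1(t^{-2/\alpha} s)$, giving $\eta_t(s)/t = t^{-1-2/\alpha}\,\eta_1(t^{-2/\alpha} s)$. For each fixed $s>0$, $t^{-2/\alpha} s \to \infty$ as $t \to 0^+$, so the classical tail asymptotic $\eta_1(u) \sim \pi(u)$ as $u \to \infty$ (a direct consequence of the Laplace exponent being $\lambda^{\alpha/2}$, or of the explicit stable density asymptotics) yields, after simplification of the powers of $t$, exactly $\eta_t(s)/t \to \pi(s)$.

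For the dominating function, I will prove the global bound $\eta_1(u) \le C_\alpha\, u^{-1-\alpha/2}$ for all $u > 0$. This holds near infinity by the tail asymptotic, and on any compact subinterval of $(0,\infty)$ by continuity and boundedness of $\eta_1$; the only delicate region is near $u=0$, where it is rescued by the fact that $\eta_1$ vanishes faster than any power of $u$ there (the stable subordinator density has super-polynomial decay at the origin), so $\eta_1(u) u^{1+\alpha/2}$ stays bounded down to $u=0$. Scaling then gives $\eta_t(s)/t \le C_\alpha\, \pi(s)$ uniformly in $t > 0$. Since $x \neq y$, the function $s \mapsto g_s(x-y)\, e^{-s}\, \pi(s)$ is integrable on $(0,\infty)$ with integral $\tfrac{\Gamma(1-\alpha/2)}{\alpha/2}\,\nu(x-y) < \infty$, so dominated convergence applies and yields the claim.

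The only nontrivial step is the uniform upper bound $\eta_1(u) \le C_\alpha u^{-1-\alpha/2}$ on all of $(0,\infty)$; the remaining ingredients are direct consequences of the subordination machinery of Subsection~\ref{subsec:rel-op} and standard properties of one-sided stable densities.
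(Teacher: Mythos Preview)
Your proof is correct and takes a genuinely different route from the paper's. The paper invokes \cite[Corollary~8.9]{MR1739520}, which gives the vague convergence $t^{-1}\eta_t(s)\,\d s \to \Pi(\d s)$ as $t\to 0^+$ as a general fact for subordinators, and then notes that ``it is not hard to verify'' that this extends to the test function $s\mapsto g_s(x-y)e^{-s}$ (which is bounded and continuous for $x\neq y$, and decays to zero as $s\to 0^+$, but does not literally vanish in a neighbourhood of zero). You instead work entirely inside the dominated convergence theorem: the stable scaling $\eta_t(s)=t^{-2/\alpha}\eta_1(t^{-2/\alpha}s)$ together with the classical tail asymptotic $\eta_1(u)\sim \pi(u)$ as $u\to\infty$ gives the pointwise limit $\eta_t(s)/t\to\pi(s)$, and the global bound $\eta_1(u)\le C_\alpha u^{-1-\alpha/2}$ (trivial away from zero, and near zero a consequence of the superexponential decay of one-sided stable densities at the origin) yields the uniform majorant $\eta_t(s)/t\le C_\alpha\pi(s)$. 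Your approach is more self-contained and makes the estimate quantitative; the paper's approach is more conceptual and would generalise immediately to any subordinator, at the price of the small extra verification needed to pass from compactly supported test functions to $g_s(x-y)e^{-s}$.
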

\begin{proof}
By \cite[Corollary~8.9]{MR1739520} we have
$\int_0^{\infty}f(s)\frac{\eta_t(s)}{t}\d{s} \longrightarrow \int_0^{\infty}f(s)\Pi(\d{s})$
as $t\to 0^+$
for any bounded continuous $f$ vanishing in the neighbourhood of zero. 
See Subsection~\ref{subsec:rel-op} for the definition of $\Pi(\d{s})$.
It is then not hard to verify that together with \eqref{def:p^m}, for $x\neq y$ we have
\begin{align*}
\frac{p(t,x,y)}{t}=\int_0^{\infty} g_s(x-y)e^{-s}\frac{\eta_t(s)}{t} \,\d{s}\quad \xrightarrow{t\to 0^+} \quad 
 \int_0^\infty  g_s(x-y)e^{-s}\, \Pi(\d{s})
=\nu(x-y)\,.
\end{align*}
\end{proof}

\section*{Declarations}

\subsection*{Funding}
Tomasz Jakubowski and Kamil Kaleta were partially supported by the grant 2015/18/E/ST1/00239 of National Science Centre, Poland.\\
Karol Szczypkowski was partially supported through the DFG-NCN Beethoven Classic~3 programme,
contract no. 2018/31/G/ST1/02252 (National Science Centre, Poland) and SCHI-419/11{1 (DFG, Germany).}

\subsection*{Competing interests}
The authors have no competing interests to declare that are relevant to the content of this article.\\

Data sharing not applicable to this article as no datasets were generated or analysed during the current study.

\small
\bibliographystyle{abbrv}

\end{document}